\definecolor{black}{rgb}{0.0, 0.0, 0.0}
\definecolor{red}{rgb}{1.0, 0.5, 0.5}
\title[Stability of composite waves of viscous shock and rarefaction]
{Time-asymptotic stability of composite waves  of  viscous shock and  rarefaction  for  barotropic Navier-Stokes equations}
\author[Kang]{Moon-Jin Kang}
\address[Moon-Jin Kang]{\newline Department of Mathematical Sciences,
\newline
Korea Advanced Institute of Science and Technology, Daejeon 34141, Korea}
\email{moonjinkang@kaist.ac.kr}
\author[Vasseur]{Alexis F. Vasseur}
\address[Alexis F. Vasseur]{\newline Department of Mathematics, \newline The University of Texas at Austin, Austin, TX 78712, USA}
\email{vasseur@math.utexas.edu}
\author[Wang]{Yi Wang}
\address[Yi Wang]{\newline Institute of Applied Mathematics, AMSS, CAS, Beijing 100190, P. R. China
\newline
and School of Mathematical Sciences, University of Chinese Academy of Sciences,
\newline Beijing 100049, P. R. China}
\email{wangyi@amss.ac.cn}
\newtheorem{theorem}{Theorem}[section]
\newtheorem{lemma}{Lemma}[section]
\newtheorem{proposition}{Proposition}[section]
\newtheorem{remark}{Remark}[section]
\newcommand{\bbr}{\mathbb {R}}
\newcommand{\R}{\mathbb {R}}
\newcommand{\mb}{\mathbf}
\newcommand{\wt}{\widetilde}
\newcommand{\deltas}{\delta_S}
\newcommand{\deltar}{\delta_R}
\numberwithin{figure}{section}
\newcommand{\beq}{\begin{equation}}
\newcommand{\eeq}{\end{equation}}
\newcommand{\bsp}{\begin{split}}
\newcommand{\esp}{\end{split}}
\newcommand{\di}{\displaystyle}
\newcommand{\ds}{\delta_S}
\def\eps{\varepsilon }
\newcommand\adots{\mathinner{\mkern2mu\raise1pt\hbox{.}
\mkern3mu\raise4pt\hbox{.}\mkern1mu\raise7pt\hbox{.}}}
\def\charf {\mbox{{\text 1}\kern-.30em {\text l}}}
\def\lam{\lambda}  
 \newcommand{\bmat}{\begin{pmatrix}} 
  \newcommand{\emat}{\end{pmatrix}}
\newcommand{\s}{\sigma}
\begin{document}
\bibliographystyle{plain}

\date{\today}

\subjclass[2010]{}
\keywords{compressible Navier-Stokes equations, viscous shock wave, rarefaction wave, shift, $a$-contraction, stability}

\thanks{\textbf{Acknowledgment.}  M.-J. Kang was partially supported by the NRF-2019R1C1C1009355.
A. Vasseur was partially supported by the NSF grant: DMS 1614918. Y. Wang is supported by NSFC grants No. 12090014 and 11688101.
}

\begin{abstract}
We prove the  time-asymptotic stability of composite waves consisting of the  superposition of a viscous shock  and a rarefaction  for the one-dimensional compressible barotropic Navier-Stokes equations.  Our result solves a long-standing problem first mentioned in 1986 by Matsumura and Nishihara in \cite{MN86}. The same authors introduced it officially as an open problem in 1992 in \cite{MN} and it was again described as very challenging open problem in 2018 in the survey paper \cite{MatsumuraBook}. The main difficulty is due to the incompatibility of the  standard anti-derivative method, used to study the stability of viscous shocks, and  the energy method used for  the stability of rarefactions. Instead of the anti-derivative method, our proof uses the $a$-contraction with shifts theory recently developed by two of the authors. This method is energy based, and can seamlessly handle the superposition of waves of different kinds. 
\end{abstract}

\maketitle \centerline{\date}

\tableofcontents

\section{Introduction}
\setcounter{equation}{0}

Consider the  one-dimensional compressible  barotropic Navier-Stokes equations. 
In the Lagrangian  mass coordinates, the system is described as 
\begin{align}\label{NS}
\begin{aligned}
\left\{ \begin{array}{ll}
        v_t- u_x =0,\qquad \quad\quad x\in\bbr,\ t\geq 0,\\
       u_t  +p(v)_x=  (\mu\frac{u_x}{v})_x, \\
        \end{array} \right.
\end{aligned}
\end{align}
where  the unknown functions $v=v(t,x)>0$, and  $u(t,x)$  represent respectively the specific volume, and the velocity of the gas. 
The   pressure function $p$ is  given by the well-known $\gamma-$law 
\[
p(v)=bv^{-\gamma},
\]
where $b>0, \gamma>1$ are both constants depending on the fluid, and  the constant  $\mu>0$ corresponds to the viscosity coefficient. Without loss of generality, we  normalize two of the   constants as $\mu=1$ and $b=1$.
The system is then endowed with initial values:
$$
(v,u)(t=0, x)=(v_0(x), u_0(x)), \qquad x\in \bbr.
$$
We  consider initial values with fixed end states $(v_\pm, u_\pm)\in\bbr^+\times\bbr$, that is such that 
\begin{equation}\label{in}
(v_0(x), u_0(x)) \rightarrow(v_\pm, u_\pm), \quad {\rm as}\quad x\rightarrow\pm\infty.
\end{equation}
On top of its physical relevance, system \eqref{NS} can be seen as the typical example of viscous conservation laws involving a physical viscosity. 
\vskip0.3cm
The large-time behavior of  solutions to \eqref{NS}, with initial values verifying \eqref{in},  is closely related to the Riemann problem of the associated  Euler equations: 
\begin{align}
\begin{aligned}\label{E}
\left\{ \begin{array}{ll}
        v_t- u_x =0,\\
       u_t  +p(v)_x= 0, \\
        \end{array} \right.
\end{aligned}
\end{align}
with the Riemann initial data
\begin{align}
\begin{aligned}\label{Ei}
(v,u)(t=0,x)=\left\{ \begin{array}{ll}
        (v_-,u_-),\quad x<0,\\
       (v_+,u_+),\quad x>0, \\
        \end{array} \right.
\end{aligned}
\end{align}
corresponding to the end states \eqref{in}. 
In the scalar case (where the system \eqref{NS} is replaced by a single viscous equation), the time-asymptotic stability of the viscous waves, and their link to the inviscid problem was first proved in 1960 by Ilin-Oleinik \cite{O} (see also Sattinger \cite{S}). 
The case for systems as  \eqref{NS} is far more difficult (see \cite{MatsumuraBook}). 
\vskip0.3cm
One of the motivation for the study of  large-time behavior of solutions to compressible  Navier-Stokes equation for Riemann initial data was to obtain insights about  inviscid limit to the Euler equation.
In 2005  \cite{BB} , Bianchini-Bressan showed, for small BV initial values, the convergence at the inviscid limit of solution to  parabolic system with ``artificial viscosity"  to the unique solution of the associated hyperbolic system.
However, to this day, the result is still unknown for the physical Navier-Stokes system, even in the barotropic case \eqref{NS}.

\vskip0.3cm
\noindent
 {\bf Riemann problem for the inviscid model: } Let us first describe the well-known solution  of the Riemann problem for the inviscid model \eqref{E}-\eqref{Ei}, first proposed and solved by Riemann \cite{Riemann} in 1860s. 
This system of conservation laws is   strictly hyperbolic. This means that the derivative of the flux function $(-u,p(v))$ with respect to the conserved variables, about a fixed  state $(v,u)\in \R^+\times \R$:
$$
\left(
\begin{array}{cr}
0&-1\\
p'(v)&0
\end{array}
\right)
$$
is diagonalizable with real distinct eigenvalues. Note that this matrix defined the waves generated by the linearization of the system \eqref{E} about this fixed state $(v,u)\in \R^+\times \R$.
Its  eigenvalues $\lambda_1=-\sqrt{-p'(v)}<0$ and  $\lambda_2=\sqrt{-p'(v)}>0$ generate both characteristic fields which are  genuinely nonlinear. Therefore, the self-similar solution, so called {\it Riemann solution}, of the Riemann problem is determined by a combination of at most two  elementary solutions from the following four families: 1-rarefaction; 2-rarefaction; 1-shock and 2-shock (see for instance \cite{Dafermos1}). These families are completely defined through their associated curves in the states plane $\R^+\times \R$.
For any  $(v_R,u_R)\in \R^+\times \R$, the 1-rarefaction curve $R_1(v_R,u_R)$ corresponds to the integral curve of the  first eigenvalue $\lambda_1$, and  is defined by  
\begin{equation}\label{(1.9)}
 R_1 (v_R, u_R):=\Bigg{ \{} (v, u)\Bigg{ |}v<v_R ,~u=u_R-\int^v_{v_R}
 \lambda_1(s) ds\Bigg{ \}}.
\end{equation}
The 2-rarefaction curve $R_2$ can be defined in the same way from the second eigenvalue  $\lambda_2$.  
For any initial values of the Riemann problem \eqref{Ei} with $(v_-,u_-)=(v_L,u_L)$, $(v_+,u_+)=(v_R,u_R)$, such that $(v_L,u_L)\in R_1(v_R,u_R)$, the solution $(v^r,u^r)$ of \eqref{E} is the 1-rarefaction wave defined as 
\begin{equation} \label{BESintro}
\lambda_1(v^r(t, x))  = \begin{cases}
\lambda_1(v_L) , \qquad x < \lambda_1(v_L)t, \\
\frac{x}{t}, \qquad  \lambda_1(v_L)t \leq x \leq  \lambda_1(v_R)t, \\
\lambda_1(v_R), \qquad x > \lambda_1(v_R)t,
\end{cases}
\end{equation}
together with 
\begin{equation}\label{rareintro}
\begin{array}{ll}
  &z_1(v^r(t,x), u^r(t,x))=  z_1 (v_L, u_L)=z_1 (v_R,u_R),
\end{array}
\end{equation}
where $z_1(v,u)=u+\int^v \lambda_1(s)ds$ is called the 1-Riemann invariant to the Euler equation \eqref{E}.  The case of 2-rarefaction wave is treated similarly from the second eigenvalue $\lambda_2$.
 \vskip0.3cm
We can now define the shock  curves using the Rankine-Hugoniot condition, as the one-parameter family of all the $(v,u)$  such that there exists $\sigma$ with:
\begin{equation}\label{RH}
\begin{array}{l} -\sigma(v_R-v)-(u_R-u)=0,\\[2mm]
\di  -\sigma(u_R-u)+(p(v_R)-p(v))=0.
\end{array}
\end{equation}
The general theory shows that this condition defines actually 2 curves that meet at the point $(v_R,u_R)$, one for the value $\sigma=-\sqrt{-\frac{p(v_R)-p(v)}{v_R -v}}$ (the 1-shock curve $S_1(v_R,u_R)$ which corresponds to admissible shocks for $v>v_R$), and one for the value $\sigma=\sqrt{-\frac{p(v_R)-p(v)}{v_R -v}}$ (the 2-shock curve $S_2(v_R,u_R)$ with admissible shocks for $v<v_R$).
\vskip0.1cm
Whenever $(v_L,u_L)\in S_1(v_R,u_R)\cup S_2(v_R,u_R)$, the solution $(v^s,u^s)$ to \eqref{E}-\eqref{Ei} with $(v_-,u_-)=(v_L,u_L)$, $(v_+,u_+)=(v_R,u_R)$, is given by the discontinuous traveling wave defined as
\begin{equation} \label{BERintro}
 (v^s,u^s)(t,x)= \begin{cases}
(v_L,u_L) , \qquad x < \sigma t, \\
(v_R,u_R), \qquad x > \sigma t.
\end{cases}
\end{equation}
\vskip0.3cm
For the general case of any  states $(v_-,u_-), (v_+,u_+)\in \R^+\times \R$, it can be shown that there exists a (unique) intermediate state $(v_m,u_m)\in \R^+\times \R$ such that $(v_m,u_m)$ is on a curve of the second families from $(v_+,u_+)$ (either $R_2(v_+,u_+)$ or $S_2(v_+,u_+)$), and $(v_-,u_-)$ is on a curve of the first families from $(v_m,u_m)$ (either $R_1(v_m,u_m)$ or $S_1(v_m,u_m)$). The solution $(v,u)$ of \eqref{E}-\eqref{Ei} is then obtained by the juxtaposition of the two associated waves
$$
(v,u)(t,x)=(v_1,u_1)(t,x)+(v_2,u_2)(t,x)-(v_m,u_m).
$$
The wave  $(v_1,u_1)$ is 1-rarefaction fan solution to \eqref{BESintro}-\eqref{rareintro} if $(v_-,u_-)\in R_1(v_m,u_m)$, or 1-shock solution to \eqref{BERintro} if $(v_-,u_-)\in S_1(v_m,u_m)$, with $(v_L,u_L)=(v_-,u_-)$, $(v_R,u_R)=(v_m,u_m)$.   The wave  $(v_2,u_2)$ is  2-shock solution to 
\eqref{BERintro} if $(v_m,u_m)\in S_2(v_+,u_+)$, or, 2-rarefaction fan solution if $(v_m,u_m)\in R_2(v_+,u_+)$, both with the end states   $(v_L,u_L)=(v_m,u_m)$, $(v_R,u_R)=(v_+,u_+)$. Note that the cases of single wave are included as   degenerate cases when $(v_-,u_-)=(v_m,u_m)$, or $(v_+,u_+)=(v_m,u_m)$. 
 \vskip0.3cm
 \noindent
 {\bf Previous time-asymptotic results for the viscous model:}
The time-asymptotic behavior of the viscous solution to \eqref{NS} depends on whether the associated Riemann solution to the associated inviscid model \eqref{E}-\eqref{Ei} involves shock waves or rarefaction waves. In the case where \eqref{Ei} is a shock, the viscous counterpart   for \eqref{NS}, called viscous shock, is the traveling wave $(\wt v^S(x-\sigma t), \wt  u^S(x-\sigma t))$ defined by the following ODE:
\begin{equation}\label{VSintro}
\left\{
\begin{array}{ll}
\di -\sigma (\wt v^S)^\prime-( \wt u^S)^\prime=0,\\[3mm]
\di -\sigma( \wt u^S)^\prime+(p( \wt v^S))^\prime=\Big(\frac{( \wt u^S)^\prime}{ \wt v^S}\Big)^\prime,\\[3mm]
\di ( \wt v^S, \wt u^S)(-\infty)=(v_L, u_L), \qquad ( \wt v^S, \wt u^S)(+\infty)=(v_R, u_R).
\end{array}
\right.
\end{equation}
 Matsumura-Nishihara \cite{MN-S}  proved the stability of  the viscous shock waves \eqref{VSintro} for  the compressible Navier-Stokes equations \eqref{NS}. Independently, Goodman showed in \cite{G} the same result of a general system with artificial diffusion. This corresponds to the case where diffusion is added to all the equations of the system. 
In both papers, the proof were done under the  zero mass condition which is crucial for using the so called anti-derivative method. 
 Then Liu \cite{Liu}, Szepessy-Xin \cite{SX} and Liu-Zeng \cite{LZ} removed the crucial zero mass condition in \cite{MN-S, G} by introducing the constant shift on the viscous shock and the diffusion waves and the coupled diffusion waves in the transverse characteristic fields. Masica-Zumbrun \cite{MZ} proved the spectral stability of viscous shock to 1D compressible Navier-Stokes system under a spectral condition, which is slightly weaker than the zero mass condition. The case of the superposition  of two shocks for the Navier-Stokes-Fourier system was treated by  Huang-Matsumura in \cite{HM}. Finally, the asymptotic stability of viscous shocks for Navier-stokes systems with degenerated viscosities was studied in Matsumura-Wang \cite{matwan}, and generalized to a larger class of viscosity in  \cite{VY} using the BD entropy introduced by Bresch-Desjardins in  \cite{BD-06}.
\vskip0.3cm
The treatment of stability of rarefactions is performed with very different techniques based on direct energy methods. Matsumura-Nishihara \cite{MN86, MN} first proved the time-asymptotic stability of the rarefaction waves for  the compressible and isentropic Navier-Stokes equations \eqref{NS}. It was later  generalized to the Navier-Stokes-Fourier system by Liu-Xin \cite{LX} and Nishihara-Yang-Zhao \cite{NYZ}.
\vskip0.3cm
\noindent
{\bf The case of the juxtaposition of a shock and a rarefaction:}
However, the time-asymptotic stability of the superposition of a viscous shock wave and a rarefaction wave has been an open problem up to now. The main difficulty is that the classical anti-derivative method used for 
the stability of shocks does not match  well with the energy method classically used for the stability of rarefactions. The problem of the stability of such a superposition of a rarefaction and a viscous shock was first mentioned in 1986 by Matsumura and Nishihara in \cite{MN86}. The same authors introduced it officially as an open problem in 1992 in \cite{MN} and Matsumura  described it again as very challenging open problem in 2018 in the survey paper \cite{MatsumuraBook}. Our main theorem is proving this conjecture.

\begin{theorem}\label{thm:main}
For a given constant state $(v_+,u_+)\in\bbr_+\times\bbr$, there exist constants $\delta_0, \eps_0>0$ such that the following holds true.\\
For any $(v_m,u_m)\in S_2(v_+,u_+)$ and $(v_-,u_-)\in R_1(v_m,u_m)$ such that 
$$|v_+-v_m|+|v_m-v_-|\leq \delta_0,$$ 
denote $(v^r,u^r)(\frac xt)$ the 1-rarefaction solution to \eqref{E} with end states $(v_-,u_-)$ and $(v_m,u_m)$, and $(\tilde{v}^S,\tilde{u}^S)(x-\sigma t)$ the 2-viscous shock solution of  \eqref{VSintro} with end states $(v_m,u_m)$ and $(v_+,u_+)$.
Let $(v_0, u_0)$ be  any initial data such that 
\begin{equation}\label{i-p}
\sum_{\pm}\Big( \|v_0-v_\pm\|_{L^2(\bbr_\pm)} +  \|u_0-u_\pm\|_{L^2(\bbr_\pm)} \Big) + \| v_{0x}\|_{L^2(\bbr)}+ \|u_{0x}\|_{L^2(\bbr)} < \eps_0,
\end{equation}
where $\bbr_- :=-\bbr_+ = (-\infty,0)$.\\
 Then,  the compressible Navier-Stokes system \eqref{NS} admits a unique global-in-time solution $(v,u)$. Moreover, there exists an absolutely continuous shift $\mb{X}(t)$ such that
 \begin{align}
\begin{aligned}\label{ext-main}
&v(t,x)- \Big( v^r(\frac xt)+\wt v^S(x-\s t-\mb X(t))-v_m \Big) \in C(0,+\infty;H^1(\bbr)),\\
& u(t,x)- \Big( u^r(\frac xt)+\wt u^S(x-\s t-\mb X(t))-u_m \Big) \in C(0,+\infty;H^1(\bbr)),\\
& u_{xx}(t,x)-\wt u^S_{xx}(x-\s t-\mb X(t))\in L^2(0,+\infty; L^2(\bbr)).
\end{aligned}
\end{align}
In addition, as $t\to+\infty$,
\begin{equation}\label{con}
\sup_{x\in\bbr}\Big|(v,u)(t,x)-  \Big(v^r(\frac xt)+\wt v^S(x-\s t-\mb X(t))-v_m,u^r(\frac xt)+\wt u^S(x-\s t-\mb X(t))-u_m \Big) \Big| \to 0,
\end{equation}
and
\begin{equation}\label{as}
\lim_{t\rightarrow+\infty} |\dot {\mb X}(t) |=0.
\end{equation}
\end{theorem}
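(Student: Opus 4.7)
The plan is to work entirely inside the $a$-contraction with shifts framework on the composite ansatz, so that the shock and the rarefaction contributions live inside a single weighted relative entropy. Because the genuine 1-rarefaction fan $(v^r,u^r)(x/t)$ is only Lipschitz and singular at $t=0$, I first replace it by a smooth approximation $(\bar v^R,\bar u^R)(t,x)$ obtained by solving the inviscid Burgers equation for the 1-Riemann invariant with a smoothly monotone initial datum between $v_-$ and $v_m$; this is a now-standard construction that furnishes $\bar u^R_x>0$, decay of $\|\bar u^R_{xx}\|_{L^p}$ in time, and exponential tails outside the rarefaction fan. I then set
\[
(\bar v,\bar u)(t,x):=(\bar v^R,\bar u^R)(t,x)+(\wt v^S,\wt u^S)(x-\s t-\mb X(t))-(v_m,u_m),
\]
where the absolutely continuous shift $\mb X(t)$ is to be determined, and denote the perturbation by $(\phi,\psi):=(v,u)-(\bar v,\bar u)$. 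Local-in-time existence in $H^1$ of $(v,u)$ is classical under \eqref{i-p}.

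To derive the uniform a priori bound I introduce a weight $a(t,\xi)$ in the shifted frame $\xi=x-\s t-\mb X(t)$, piecewise constant in $\xi$ with a small monotone jump across the viscous shock profile, and consider the weighted relative entropy
\[
\mathcal{E}(t):=\int_\bbr a(t,\xi)\,\eta\bigl((v,u)\,\big|\,(\bar v,\bar u)\bigr)\,dx,
\]
where $\eta$ is the mechanical relative energy of \eqref{NS}. A direct computation along the equations produces a dissipation identity of the schematic form
\[
\frac{d}{dt}\mathcal{E}(t)=\dot{\mb X}(t)\,\mathcal{Y}(t)-\mathcal{G}^S(t)-\mathcal{D}^v(t)-\mathcal{G}^R(t)+\mathcal{B}^S(t)+\mathcal{I}(t),
\]
where $\mathcal{D}^v$ is the viscous dissipation, $\mathcal{G}^S\ge 0$ is the good quadratic term at the shock coming from the jump of $a$, $\mathcal{G}^R\ge 0$ is the rarefaction dissipation furnished by $\bar u^R_x>0$, $\mathcal{B}^S\ge 0$ is the standard bad quadratic term localized at the shock, and $\mathcal{I}$ collects interaction errors between the rarefaction and the shock together with the error in approximating the rarefaction. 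I then define the shift by the ODE $\dot{\mb X}(t)=-M\,\mathcal{Y}(t)$ with $M$ large, turning the shift term into a negative contribution which also controls $|\dot{\mb X}|^2$ pointwise. The Kang--Vasseur Poincaré-type inequality absorbs $\mathcal{B}^S$ into $\mathcal{G}^S+\mathcal{D}^v$ once $|v_+-v_m|\le\delta_0$ is small, while $\mathcal{I}$ is $L^1_t$ because the two waves travel with separated speeds (the 1-rarefaction leftward and the 2-shock rightward with speed $\s>0$), so their supports decouple.

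This produces an estimate of the form
\[
\mathcal{E}(t)+\int_0^t\bigl(\mathcal{G}^S+\mathcal{D}^v+\mathcal{G}^R+|\dot{\mb X}|^2\bigr)(s)\,ds\le C\mathcal{E}(0)+C\delta_0,
\]
controlling $(\phi,\psi)$ in $L^\infty_tL^2_x$. A second energy estimate, obtained by differentiating the equations in $x$, using the parabolic structure of the momentum equation together with the effective velocity $h:=u-(\log v)_x$ to recover $v_x$ dissipation, and testing against the differentiated perturbation with an auxiliary weight, upgrades the bound to a uniform $H^1$ estimate with $L^2_t$-integrability of $\|\phi_x\|_{L^2}$ and $\|\psi_x\|_{H^1}$. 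A continuation argument then yields global existence and the regularity \eqref{ext-main}, and the asymptotic statements \eqref{con}, \eqref{as} follow from the classical fact that an $H^1$-bounded function whose first derivative belongs to $L^2_t L^2_x$ must decay to zero in $L^\infty_x$.

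The principal obstacle is ensuring that the $a$-contraction mechanism, whose sharpness is tuned to the linearization of \eqref{NS} about the shock endpoints $(v_m,u_m)$ and $(v_+,u_+)$, remains valid when the actual background near the shock is the slightly perturbed composite state $(\bar v^R+\wt v^S-v_m,\bar u^R+\wt u^S-u_m)$. The rarefaction background adds small time-dependent corrections to every coefficient that enters the Poincaré inequality underpinning the shock contraction, and these must be tracked carefully so as not to destroy the strict positivity margin in $\mathcal{G}^S$. Coupled with the need to control the rarefaction--shock interaction $\mathcal{I}$ uniformly through the (possibly overlapping) early-time regime, this compatibility between the shock-tailored $a$-contraction and the energy-method treatment of the rarefaction is where the bulk of the technical work must lie.
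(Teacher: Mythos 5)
Your overall architecture---smooth approximate rarefaction from Burgers, shifted viscous shock, weighted relative entropy with a shift ODE, the Poincar\'e-type inequality, a continuation argument, and the $\int(|g|+|g'|)\,dt<\infty$ decay argument---matches the paper's. But there is a genuine gap at the heart of the scheme. As written, your zeroth-order estimate uses the mechanical relative energy of \eqref{NS} in the original variables $(v,u)$, for which the viscous dissipation is $\int \frac{a}{v}|(u-\bar u)_x|^2\,dx$, i.e.\ dissipation in $u$ only. After completing the square in $u-\bar u$, the bad term localized at the shock is $\frac{1}{2\sigma}\int a_x|p(v)-p(\bar v)|^2\,dx$, quadratic in the $v$-perturbation. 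The Poincar\'e inequality \eqref{poincare} can absorb this only if one controls $\int y(1-y)|\partial_y w|^2$ for $w=p(v)-p(\bar v)$ in the shock's self-similar variable, i.e.\ only if one has the $v$-dissipation $\int|\partial_x(p(v)-p(\bar v))|^2$ in hand; the $u$-dissipation gives no such control, so the step ``the Poincar\'e-type inequality absorbs $\mathcal{B}^S$ into $\mathcal{G}^S+\mathcal{D}^v$'' fails. The paper's key move is to perform the \emph{entire} zeroth-order $a$-contraction in the BD effective variables $(v,h)$ with $h=u-(\ln v)_x$, for which the system becomes \eqref{hNS-1intro} and the diffusion $(\ln v)_{xx}$ sits in the mass equation, producing exactly the needed $D(U)=\int|\partial_\xi(p(v)-p(\tilde v))|^2$; only afterwards does one return to $(v,u)$ (with $a\equiv1$) for the $u$-estimates. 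You invoke $h$ only in the first-derivative estimate, which is too late.

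A second, related omission concerns the interplay of weight and shift. The functional $\mathcal{Y}$ multiplying $\dot{\mathbf{X}}$ is a linear combination of $v-\bar v$ and $h-\bar h$, so no choice of shift speed, however large $M$ is, pins down the weighted mean of $p(v)-p(\bar v)$ alone---and that mean is precisely what the Poincar\'e inequality leaves uncontrolled. The paper resolves this by amplifying the weight, $a=1+\frac{\lambda}{\delta_S}(p(v_m)-p(\tilde v^S))$ with $\delta_S\ll\lambda\le C\sqrt{\delta_S}$, so that the relative entropy flux term $\frac{\sigma}{2}\int a_\xi\big|h-\bar h-\frac{p(v)-p(\bar v)}{\sigma}\big|^2$ dominates the $h$-component of $\mathcal{Y}$ by the factor $\lambda/\delta_S\gg1$, leaving the shift free to control the $v$-mean. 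Your piecewise-constant weight with a small jump has neither the amplification nor an $a_\xi$ distributed over the shock layer (which the change of variables to $y\in(0,1)$ requires). Finally, the wave-separation argument making $\mathcal{I}\in L^1_t$ is not automatic in the presence of the shift: one must first prove $|\mathbf{X}(t)|\le C\eps_1 t<\sigma t/4$ so that the shifted shock cannot drift into the rarefaction fan.
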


\begin{remark}
Theorem \ref{thm:main} states that if the two far-field states $(v_\pm, u_\pm)$ in \eqref{in} are connected by the superposition of shock  and rarefaction waves, then the solution to the compressible Navier-Stokes equations \eqref{NS} tends to the composition wave of the self-similar rarefaction wave and the viscous shock wave with the shift $\mb{X}(t)$, which solves the open problem proposed by Matsumura-Nishihara \cite{MN} since 1992.
\end{remark}

\begin{remark}
The shift function $\mb{X}(t)$ (defined in \eqref{X(t)}) is proved to satisfy the time-asymptotic behavior \eqref{as}, which implies that 
$$
\lim_{t\rightarrow+\infty}\frac{{\mb X}(t)}{t}=0,
$$
that is, the shift function ${\mb X}(t)$ grows at most sub-linearly w.r.t. the time $t$ and the shifted viscous shock wave still keeps the original traveling wave profile time-asymptotically.

\end{remark}

\begin{remark}
Note that our result in Theorem \ref{thm:main} also holds true in the case of a single viscous shock, that is, $\deltar\equiv0$. In this case, Theorem \ref{thm:main} provides an alternative proof for stability of a single viscous shock. Our proof is far simpler than the ones of Masica-Zumbrun \cite{MZ}, or Liu-Zeng \cite{LZ}. Moreover, our approach does not have the disadvantages of the anti-derivative method, as the necessity to consider zero mass initial perturbations for instance. This simplification is what allows us to consider the combination of waves of different kinds. Therefore, our approach follows exactly the suggestion of Matsumura in \cite[Section 4.2, page 2540]{MatsumuraBook} to find a simpler proof, for the stability of viscous shock, than the ones in \cite{MZ} or \cite{LZ}, in order to attack many other open problems.  Note however, that our simplification comes at the cost of  less precise information, especially on the shift ${\mb X}(t)$. 
\end{remark}

\begin{remark}
The extension of Theorem \ref{thm:main}  to general smooth viscosity function $\mu=\mu(v)>0$ and general pressure function $p(v) > 0$ satisfying $p'(v) < 0, p''(v) > 0$ follows without meaningful added difficulties, since we consider small $H^1$-perturbations. For the sake of clarity, and to simplify slightly the arguments, we made the choice to write the paper in the physical relevant context of constant viscosities and power  pressure laws. 
\end{remark}

The main new ingredient of our proof is the use of the method of $a$-contraction with shifts  \cite{Kang-V-NS17} to track the stability of the viscous shock. The method is based on the relative entropy introduced by Dafermos \cite{Dafermos4} and DiPerna \cite{DiPerna}. It is  energy based, and so meshes seamlessly with the treatment of the rarefaction.  
\vskip0.3cm
\noindent
{\bf The method of $a$-contraction with shifts:}
The method of $a$-contraction with shifts was developed in \cite{KVARMA} (see also \cite{LV}) to study the stability of extremal shocks for inviscid system of conservation laws, as for example, the Euler system \eqref{E}.  
Consider the entropy of the system (which is actually the physical energy) defined for any state $U=(v,u)$ as:
$$
\eta(U)=\frac{u^2}{2} + Q(v),\qquad Q(v)=\frac{1}{(\gamma-1) v^{\gamma-1}}.
$$
We then consider the relative entropy defined in \cite{Dafermos4} for any two states $U=(v,u)$, $\overline{U}=(\bar v,\bar u)$:
$$
\eta(U|\overline U)=\frac{|u-\overline{u}|^2}{2}+ Q(v|\bar{v}), \qquad  Q(v|\bar{v})=Q(v)-Q(\bar{v})-Q'(\bar v)(v-\bar v).
$$
Note that $Q$ is convex, and so $\eta(U|\overline U)$ is nonnegative and equal to zero if and only if $U=\overline U$. Therefore $\eta(U|\overline U)$ can be used as a pseudo-distance between $U$ and $\overline U$. 
It can be shown that rarefactions $\overline U$ (that is solutions to  \eqref{BESintro}-\eqref{rareintro}) have a contraction property for this pseudo-metric (see for instance \cite{Vasseur_Book}). Indeed, for any weak entropic solution $U$ to \eqref{E},  it can be shown that
$$
\frac{d}{dt} \int_\R \eta(U|\overline U)\,dx\leq 0.
$$
 The contraction property is not true  if $\overline U$ is a shock (that is traveling waves  \eqref{BERintro} verifying the Rankine-Hugoniot conditions \eqref{RH}). However, the contraction property can be recovered  up to a shift, after weighting the relative entropy (see  \cite{KVARMA}). Indeed, there exists weights $a_-,a_+>0$ (depending only on the shock $\overline U$) such that for any weak entropic solution $U$ of \eqref{E} (verifying a mild condition called strong trace property)  there exists a Lipschitz  shift function $t\to X(t)$ such that 
 $$
\frac{d}{dt}\left\{ a_-\int_{-\infty}^{X(t)}\eta(U|\bar{U})\,dx+a_+\int_{X(t)}^\infty\eta(U|\bar U)\,dx\right\}\leq 0.
 $$
This was first proved in the scalar case by Leger \cite{Leger} for $a_-=a_+$. It has been shown in \cite{Serre-Vasseur} that the contraction with $a_-=a_+$ is usually false for most systems. Therefore the weighting via the coefficients $a_-,a_+$ is essential. Note that in the case of the full Euler system, the a-contraction property up to shifts is  true for all the single wave patterns, including the 1-shocks, 3-shocks (see \cite{Vasseur-2013}), and  the 2-contact discontinuities (see \cite{SV-16}). Although the $a$-contraction property with shifts holds for general extremal shocks, it is not always true for intermediate shocks  (see \cite{Kang} for instance). 
\vskip0.3cm
The first extension of the method  to viscous models was done in the 1D scalar case \cite{Kang-V-1}  (see also \cite{Kang19}) and then in the multi-D case \cite{KVW}. The case of the barotropic Navier-Stokes equation \eqref{NS} was treated in \cite{Kang-V-NS17}. The $a$-contraction property takes place in variables associated to the BD entropy (see \cite{BD-06}): $U=(v,h)$, where $h$ is the effective velocity defined as $h=u-(\mathrm{ln} \ v)_x$. In these variables, system \eqref{NS} with $ \mu=1$ is transformed as 
\begin{align}
\begin{aligned}\label{hNS-1intro}
\left\{ \begin{array}{ll}
 v_t -h_x= (\ln v)_{xx}, \\ 
        h_t+p(v)_x=0. 
        \end{array} \right.
\end{aligned}
\end{align}
 The only nonlinear term of the hyperbolic system \eqref{E} is the pressure which is a function of $v$. The system \eqref{hNS-1intro} is then better than \eqref{NS} since the diffusion is in the variable $v$ corresponding to the nonlinear term $p(v)$. It was shown in \cite{Kang-V-NS17} that  there exists a monotonic function $x\to a(x)$ (with limits $a_\pm$ at $\pm\infty$), depending only on the viscous shock $\bar{U}=(\bar v,\bar h)$ solution to \eqref{VSintro} (in the $(v,u)$ variables), such that for any solution $U$ to \eqref{hNS-1intro}, there exists a shift function $t\to \mb{X}(t)$ with
 $$
 \frac{d}{dt}\int_\R a(x-\mb{X}(t))\eta(U(t,x)|\bar U(x-\mb{X}(t)))\,dx \leq 0.
$$
\vskip0.3cm
The strategy of this paper is to apply the $a$-contraction method to the composite wave made of a shock wave and a  rarefaction wave. The weight function $a$ and the shift $\mb{X}(t)$ is applied only on the  shock wave. The combination of the viscous shock and the rarefaction is not an exact solution to \eqref{NS}. This introduces some errors that can be controlled thanks to the separation of the waves. Because of the shift, the separation of the waves is not automatic. We  show, however, that it is still true, and  that the shock cannot artificially stick to the rarefaction. This  provides an "almost" $a$-contraction in the effective variables $(v, h)$.  We then recover the classical control on the $H^1$ norm of the perturbation in the classical variables $(v,u)$.

\vskip0.3cm
\noindent
{\bf The $a$-contraction with shift theory for a small viscous shock:} 
Note that the $a$-contraction result of   \cite{Kang-V-NS17} provides a uniform stability for viscous shocks with respect to the strength of the viscosity. This is used in  \cite{KV-Inven} to obtain the stability of inviscid shocks of \eqref{E}
among any inviscid limits of \eqref{NS}. Since the conjecture of Matsumura \cite{MatsumuraBook} does not mention the uniform stability with respect to the viscosity, we choose in this paper to restrict ourselves to the classical framework and show the stability with $\mu=1$ fixed. This allows us to simplify some of the arguments of  \cite{Kang-V-NS17} in this context. The method is even more powerful in this restricted framework and should be developed in the foreseeable future to a  large family of problems. Let us describe  the fundamental ideas in this context. 
\vskip0.3cm
\noindent{\it A Poincar\'e type inequality and the scalar case:}
At its core,  the method of $a$-contraction with shift in the viscous cases relies on the following Poincar\'e type inequality (see \cite[Lemma 2.9]{Kang-V-NS17}).  
\begin{lemma}\label{lem-poin}
For any $f:[0,1]\to\bbr$ satisfying $\int_0^1 y(1-y)|f'|^2 dy<\infty$, 
\beq\label{poincare}
\int_0^1\Big|f-\int_0^1 f dy \Big|^2 dy\le \frac{1}{2}\int_0^1 y(1-y)|f'|^2 dy.
\eeq
\end{lemma}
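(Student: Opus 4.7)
The plan is to reduce \eqref{poincare} to a spectral estimate for the singular Sturm--Liouville operator $L u := -(y(1-y) u')'$ on $(0,1)$ and exploit the fact that its eigenfunctions, the shifted Legendre polynomials, admit an explicit spectral gap. Since both sides of \eqref{poincare} are invariant under replacing $f$ by $f - \int_0^1 f\,dy$, the first step is to normalize to $\int_0^1 f\,dy = 0$. A standard approximation argument (truncating away from the endpoints $y=0,1$, mollifying, and passing to the limit by monotone convergence on the right-hand side together with Fatou on the left-hand side) then lets me assume $f$ is smooth on $[0,1]$.

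Next, I expand $f$ in the orthogonal basis of shifted Legendre polynomials $\tilde P_n(y) := P_n(1-2y)$, which satisfy
\[
\int_0^1 \tilde P_m(y)\, \tilde P_n(y)\, dy = \frac{\delta_{mn}}{2n+1}, \qquad -\big( y(1-y)\, \tilde P_n'(y) \big)' = n(n+1)\, \tilde P_n(y).
\]
Writing $f = \sum_{n \ge 0} a_n \tilde P_n$, the mean-zero hypothesis forces $a_0 = 0$. Parseval's identity gives $\int_0^1 f^2\, dy = \sum_{n \ge 1} a_n^2/(2n+1)$, while integrating by parts against the eigenvalue equation (the boundary contributions vanish because of the factor $y(1-y)$) yields
\[
\int_0^1 y(1-y)\, |f'|^2\, dy = \sum_{n \ge 1} \frac{n(n+1)\, a_n^2}{2n+1}.
\]
The elementary inequality $n(n+1) \ge 2$ for $n \ge 1$ then gives $\int_0^1 y(1-y) |f'|^2\, dy \ge 2 \int_0^1 f^2\, dy$, which is the claimed bound; equality holds if and only if only the $n=1$ mode is active, so the extremizer is $f(y) = c(1 - 2y)$.

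The main obstacle is the density/regularity step: under only the weak hypothesis $\int_0^1 y(1-y)\, |f'|^2\, dy < \infty$, the derivative $f'$ may be singular at the endpoints, and one must first verify that $f \in L^2(0,1)$ so that both sides of \eqref{poincare} are well-defined, and then justify the Legendre expansion in the natural energy space associated with $L$. A convenient route is to apply the smooth version of the inequality to $f_\eps(y) := f(y)\, \chi_\eps(y)$, where $\chi_\eps$ is a cutoff supported in $[\eps, 1-\eps]$, and then pass to the limit $\eps \to 0^+$; the control of the cutoff errors comes from the fact that the weight $y(1-y)$ vanishes precisely where the derivative of $\chi_\eps$ concentrates. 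Once this foundational step is in place, the spectral computation above finishes the proof.
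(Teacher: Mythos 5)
Your spectral argument via the shifted Legendre polynomials is exactly the approach the paper relies on (the proof is deferred to \cite{Kang-V-NS17}, and the introduction explicitly notes that the eigenfunctions of the associated Euler--Lagrange problem are the Legendre polynomials with explicit eigenvalues), and your computation --- orthogonality, the eigenvalue identity, and $n(n+1)\ge 2$ for $n\ge 1$ --- is correct and yields the sharp constant $1/2$ with extremizer $1-2y$. The one caveat is the cutoff-based density step: since $f$ may grow like $\sqrt{\log(1/y)}$ at the endpoints, the error term $\int y(1-y)|f|^2|\chi_\eps'|^2\,dy$ need not vanish as $\eps\to 0$; it is cleaner to observe that $\frac{n(n+1)a_n}{2n+1}=\int_0^1 y(1-y)\,f'\,\tilde P_n'\,dy$ (the boundary terms vanish by the logarithmic growth bound on $f$) and invoke Bessel's inequality for the orthogonal system $\{\tilde P_n'\}_{n\ge1}$ in $L^2(y(1-y)\,dy)$, which gives $\sum_{n\ge 1}\frac{n(n+1)a_n^2}{2n+1}\le\int_0^1 y(1-y)|f'|^2\,dy$ directly without any regularization.
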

The eigenfunctions of the associated Euler-Lagrange equation to this minimization problem are the Legendre polynomials, and their eigenvalues are given explicitly.  As a consequence, the inequality is sharp. The weighted $H^1$ norm of this inequality comes naturally when considering the following Burgers equation (see \cite{Kang-V-1}):
\begin{equation}\label{burgers}
\partial_tu+\partial_x(u(1-u))=\partial_x^2u,
\end{equation}
and its viscous shock profile $\tilde u$ defined as
$$
\partial_x(\tilde u(1-\tilde u))=\partial_x^2\tilde u, \qquad \lim_{x\to-\infty}\tilde u(x)=0,\qquad  \lim_{x\to+\infty}\tilde u(x)=1.
$$
This shock does not depend on time (it is a stationary wave). Integrating in $x$, and denoting $\tilde u'=\partial_x \tilde u$ gives
\begin{equation}\label{eqtilde}
\tilde u'(x) =\tilde u(x)(1-\tilde u(x)).
\end{equation}
Consider now the relative entropy associated to the entropy $\eta(u)=u^2/2$ between a generic solution $u$ of \eqref{burgers} and the shifted shock $\tilde u^{-\mb{X}}(t,x)=\tilde{u}(x-\mb{X}(t))$ for an arbitrary shift $\mb{X}(t)$:
$$
\eta(u|\tilde u^{-\mb{X}})(t,x)= \frac{|u(t,x)-\tilde u(x-\mb{X}(t))|^2}{2}.
$$
The shifted shock verifies the equation
$$
\partial_t[\tilde u^{-\mb{X}}]+\dot{\mb{X}}  (\tilde u')^{-\mb{X}}+ \partial_x(\tilde u^{-\mb{X}}(1-\tilde u^{-\mb{X}}))=\partial_x^2[\tilde u^{-\mb{X}}].
$$
Multiplying the difference of \eqref{burgers} and the shifted shock equation by $(u-\tilde u^{-\mb{X}})$, we can show that
\begin{equation}\label{inburgers}
\frac{d}{dt} \int_\R \eta(u|\tilde u^{-\mb{X}})(t,x)\,dx=\dot{\mb{X}}(t)\int_\R \tilde u'(u^\mb{X}-\tilde u)\,dx +\int_\R \tilde u' |u^\mb{X}-\tilde u|^2-\int_\R |\partial_x (u^\mb{X}-\tilde u)|^2\,dx.
\end{equation}
Note that, at the final step,  we  made the change of variable  $x\to x+\mb{X}(t)$ flipping the shift from the shock $\tilde u$ to the function $u^\mb{X}(t,x)=u(t,x+\mb{X}(t))$. 
We now fix the speed of the shift as 
$$
\dot{\mb{X}}(t)=- \int_\R \tilde u'(u^\mb{X}-\tilde u)\,dx,
$$
which defines the shift $t\mapsto \mb{X}(t)$ thanks to the Cauchy-Lipschitz theorem. 
We claim that,  for  this shift, $\int_\R \eta(u|\tilde u^{-\mb{X}})(t,x)\,dx$ is non-increasing in time.  This statement will be proved, if we can show that for any function $g\in H^1(\R)$:
\begin{equation}\label{eqgeneral}
-\bar g^2+\int_\R \tilde u'(x) |g(x)|^2\,dx-\int_\R |g'(x)|^2\,dx\leq 0,
\end{equation}
where $\bar g=\int_\R \tilde u'(x)g(x)\,dx$.
Indeed, for any fixed time $t>0$, denote $g(x)=(u^\mb{X}-\tilde u)(t,x)$. The inequality \eqref{eqgeneral} for this specific function $g$ applied to \eqref{inburgers} shows that at all these times:
$$
\frac{d}{dt} \int_\R \eta(u|\tilde u^{-\mb{X}})(t,x)\,dx\leq 0.
$$
Therefore, the contraction up to a shift is reduced to the Poincar\'e type inequality \eqref{eqgeneral}. Because $\int_\R \tilde u'\,dx=1$, it is equivalent to 
$$
\int_\R \tilde u'(x) |g(x)-\bar g|^2\,dx-\int_\R |g'(x)|^2\,dx\leq 0.
$$

Let us rewrite this inequality in the natural variable associated to the shock: 
$$
y=\tilde u(x), \qquad dy=\tilde u'(x) \,dx, \qquad f(y)=g(x).
$$
This change of variable is possible since $\tilde u$ is an increasing function from 0 to 1.
We have also
$$
g'(x)=\tilde u'(x) f'(y),\qquad \bar g=\int_0^1 f(y)\,dy,
$$
and so \eqref{eqgeneral} is equivalent to 
$$
\int_0^1\Big|f-\int_0^1 f dy \Big|^2 dy\le \int_0^1 \tilde u'(x)|f'|^2 dy.
$$
But thanks to \eqref{eqtilde}, $\tilde u'(x) =\tilde u(x)(1-\tilde u(x))=y(1-y)$. Hence  \eqref{poincare} implies \eqref{eqgeneral} since $1/2<1$.
\vskip0.3cm
\noindent {\bf The case of  the Navier-Stokes system:} If we perform the same idea on the Navier-Stokes system  \eqref{hNS-1intro} in the BD effective variables $U=(v,h)$, but without weight function $a$, we are obtaining (after Taylor expansion, using the smallness of the shock and of the perturbation) the inequality
$$
\frac{d}{dt}\int_\R \eta(U|\tilde U^{-\mb{X}})\,dx\approx \dot{\mb{X}}(t) \mb{Y}(U) +\int_\R \partial_x[p'(\tilde v)] |v^\mb{X}-\tilde v|^2 \,dx-\int_\R \frac{1}{v}|\partial_x(v^\mb{X}-\tilde v)|^2\,dx,
$$
with 
$$
\mb{Y}(t)\approx \int_\R \partial_x(\nabla \eta(\tilde U))\cdot(U^\mb{X}-\tilde U)\,dx.
$$
Thanks to the BD effective variables, the first equality is very similar to the scalar case. Especially, the dissipation is in the $v$  variable only, as the ``bad" quadratic term. However, the $\mb{Y}$ term involves now a linear combination of $v^\mb{X}-\tilde v$ and $h^\mb{X}-\tilde h$. Therefore, whatever the choice of $\dot{\mb{X}}$, we cannot control any weighted mean value of $v^\mb{X}-\tilde v$ from this term as in the scalar case.
\vskip0.3cm
The point of the method is that the flux of the relative entropy (which disappears when integrating in $x$) is better behaved. On top of a ``bad" quadratic term in $|v^\mb{X}-\tilde v|^2$, it involves a  ``good" (meaning with a good sign) quadratic term involving  a linear combination of $v^\mb{X}-\tilde v$ and $h^\mb{X}-\tilde h$.  The weight function $a$ is used to activate those flux terms. 
Note that the  linear combination involved in the flux terms  is independent of the linear combination involved in the $\mb{Y}(t)$ term. Therefore the use of both the weight and the shift allows to control the  weighted mean value of  $v^\mb{X}-\tilde v$ needed to use the Poincar\'e Lemma \ref{lem-poin}. The weight function $a$ is chosen such that $\partial_xa$ is proportional to $\partial_x[p'(\tilde v)] $ which is the analogue of $\tilde u'$ for the scalar case, and is a natural weight associated to the shock layer. Its strength, however, is enhanced by a factor bigger than the size of the shock $ \lambda\gg \delta$, in order to make the relative entropy flux term dominant.


\vskip0.3cm
The rest of the paper is organized as follows. We begin with preliminaries in Section \ref{sec-preli}. It includes known properties on the rarefaction and on the viscous shock, together with simple properties on the behavior of the 
pressure functional and the relative entropy. 
The general set up is laid out in Section \ref{sec-thm}. We introduce an a priori estimates result  in Proposition \ref{prop2}. Then we show by a continuing argument how this proposition implies Theorem \ref{thm:main}. The last two sections are dedicated to the proof of Proposition \ref{prop2}. The $a$-contraction argument is set up in Section \ref{sec-acontraction} where  global a priori estimates are proved in the variables $(v,h)$. From these global estimates, we deduce global a priori estimates in the variables $(v,u)$ in Section \ref{sec-vu}, concluding the proof of Proposition \ref{prop2}.

\section{Preliminaries} \label{sec-preli}
\setcounter{equation}{0}
We gather in this section some well-known results which will be useful in the rest of the paper.

\subsection{Relative quantities}

As explained in the introduction, the $a$-contraction with shifts theory is based on the relative entropy, and the specific volume variable $v\in \R^+$ is of particular importance. 
 For any function $F$ defined on  $\R^+$, we define the associated relative quantity defined for  $v,w\in \R^+$ as 
$$
F(v|w)=F(v)-F(w)-F'(w)(v-w).
$$
We gather, in the following lemma, useful explicit inequalities on the relative quantities associated to the pressure $p(v)=v^{-\gamma}$, and the internal energy $Q(v)=v^{1-\gamma}/(1-\gamma)$. The proofs are simply based on Taylor expansions, and can be found in  \cite[Lemmas 2.4, 2.5 and 2.6]{Kang-V-NS17}.

\begin{lemma}\label{lem-useful}
For given constants $\gamma>1$, and $v_->0$, their exist constants $C, \delta_*>0$, such that the following holds true.\\
1) For any $v,w$ such that  $0<w<2v_-, 0<v\le 3v_-$,
\beq\label{rel_Q}
 |v-w|^2\le C Q(v|w),
 \eeq
 \beq\label{rel_p}
 |v-w|^2\le C p(v|w).
 \eeq
2) For any $v, w>v_-/2$,
\beq\label{pressure2}
|p(v)-p(w)| \le C |v-w|.
\eeq
3) For any $0<\delta<\delta_*$, and for any $(v, w)\in \bbr_+^2$ 
satisfying $|p(v)-p(w)|<\delta$, and  $|p(w)-p(v_-)|<\delta$, the following holds true:
\begin{align}
\begin{aligned}\label{p-est1}
p(v|w)&\le \bigg(\frac{\gamma+1}{2\gamma} \frac{1}{p(w)} + C\delta \bigg) |p(v)-p(w)|^2,
\end{aligned}
\end{align}
\beq\label{Q-est11}
Q(v|w)\ge \frac{p(w)^{-\frac{1}{\gamma}-1}}{2\gamma}|p(v)-p(w)|^2 -\frac{1+\gamma}{3\gamma^2} p(w)^{-\frac{1}{\gamma}-2}(p(v)-
p(w))^3,
\eeq
\beq\label{Q-est1}
Q(v|w)\le \bigg( \frac{p(w)^{-\frac{1}{\gamma}-1}}{2\gamma} +C\delta  \bigg)|p(v)-p(w)|^2.
\eeq
\end{lemma}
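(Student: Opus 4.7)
The plan is to derive each inequality from appropriate Taylor expansions of the smooth functions $p(v)=v^{-\gamma}$ and $Q(v)=v^{1-\gamma}/(\gamma-1)$, exploiting their uniform convexity on the relevant ranges and the fact that $p$ is a smooth diffeomorphism of $\R^+$.

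First I would establish Part 1 by Taylor's theorem with integral remainder: for any $v,w>0$,
$$
Q(v|w)=\int_w^v Q''(s)(v-s)\,ds,\qquad p(v|w)=\int_w^v p''(s)(v-s)\,ds.
$$
Since $Q''(s)=\gamma s^{-\gamma-1}$ and $p''(s)=\gamma(\gamma+1)s^{-\gamma-2}$ are strictly decreasing and positive, the intermediate value $s$ in the integral lies in $(0,3v_-]$, so both second derivatives are bounded below by positive constants depending only on $\gamma$ and $v_-$. This yields the lower bounds \eqref{rel_Q} and \eqref{rel_p}. For Part 2, I would apply the mean value theorem to obtain $|p(v)-p(w)|=|p'(\xi)||v-w|$ for some $\xi$ between $v$ and $w$; since $\xi>v_-/2$, the derivative $|p'(\xi)|=\gamma\xi^{-\gamma-1}$ is uniformly bounded, giving \eqref{pressure2}.

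The core of the lemma is Part 3, which I would handle by changing variables to the pressure. Set $P=p(v)$ and $W=p(w)$; the hypotheses $|P-W|<\delta$ and $|W-p(v_-)|<\delta$ confine $w$ to a small fixed neighborhood of $v_-$ (once $\delta_*$ is chosen small in terms of $\gamma, v_-$), and $v$ to a slightly larger but still bounded neighborhood. Using the identity $v-w=p^{-1}(P)-p^{-1}(W)$ together with the Taylor expansion
$$
v-w=(p^{-1})'(W)(P-W)+\tfrac12(p^{-1})''(W)(P-W)^2+O(|P-W|^3),
$$
and substituting into $p(v|w)=P-W-p'(w)(v-w)$, the linear term cancels by $p'(w)(p^{-1})'(W)=1$. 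The quadratic coefficient becomes $-\tfrac12 p'(w)(p^{-1})''(W)$, which a direct computation with $p(v)=v^{-\gamma}$ (using $w=W^{-1/\gamma}$) simplifies to exactly $\frac{\gamma+1}{2\gamma\,p(w)}$. The cubic remainder is controlled uniformly by $C\delta(P-W)^2$ on the neighborhood in question, producing \eqref{p-est1}.

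The expansions for $Q(v|w)$ proceed along the same lines. Starting from $Q(v|w)=\tfrac12 Q''(w)(v-w)^2+\tfrac16 Q'''(w)(v-w)^3+O((v-w)^4)$ and inserting the expansion of $v-w$ in powers of $P-W$, I would convert every power of $w$ back to a power of $p(w)$ via $w=p(w)^{-1/\gamma}$. The quadratic coefficient collapses exactly to $\frac{p(w)^{-1/\gamma-1}}{2\gamma}$; keeping the cubic term explicitly (with its computed coefficient $-\frac{1+\gamma}{3\gamma^2}p(w)^{-1/\gamma-2}$) and absorbing only the quartic remainder yields the lower bound \eqref{Q-est11}, while absorbing both the cubic and quartic terms into a $C\delta(P-W)^2$ error yields the upper bound \eqref{Q-est1}. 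The main, though essentially clerical, obstacle is ensuring the constants appearing in the $O(\cdot)$ and $C\delta$ error terms depend only on $\gamma$ and $v_-$; this follows once $\delta_*$ is chosen so that $w$ stays in a fixed compact subinterval of $\R^+$ around $v_-$ on which $p^{-1}$ and all needed derivatives are uniformly bounded.
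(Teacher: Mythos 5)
Your proposal is correct and follows the same route the paper itself indicates: the paper does not prove Lemma \ref{lem-useful} but refers to \cite{Kang-V-NS17} and states that the proofs are "simply based on Taylor expansions," which is exactly what you carry out, and your computed coefficients $\frac{\gamma+1}{2\gamma p(w)}$, $\frac{p(w)^{-1/\gamma-1}}{2\gamma}$ and $-\frac{1+\gamma}{3\gamma^2}p(w)^{-1/\gamma-2}$ all check out. One small caveat: in \eqref{Q-est11} there is no $C\delta$ slack in the quadratic coefficient, so the quartic remainder cannot simply be "absorbed"; you must instead write the remainder in Lagrange form and verify that the fourth derivative of $P\mapsto Q(p^{-1}(P)|w)$ is positive in a neighborhood of $p(w)$ (it equals $\frac{3(1+\gamma)(1+2\gamma)}{\gamma^3}p(w)^{-1/\gamma-3}+O(\delta)$), which makes the remainder nonnegative and yields the exact lower bound.
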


\subsection{Rarefaction wave}

We now recall important properties of the 1-rarefaction waves. Consider a $(v_m, u_m)$ in \eqref{in}, and $(v_-,u_-)\in R_1(v_m, u_m)$.
Set $w_- = \lambda_1(v_-), w_m = \lambda_1(v_m)$, and
consider the Riemann problem for the inviscid Burgers equation:
\begin{equation} \label{BE}
\begin{cases}
\displaystyle w_t + ww_x = 0, \\
\displaystyle w(0, x) = w_0^r(x) = \begin{cases}
w_-,  \quad x < 0,\\
w_m,  \quad x > 0.
\end{cases}
\end{cases}
\end{equation}
If $w_- < w_m$, then \eqref{BE} has the rarefaction wave fan $w^r(t, x) = w^r(x/t)$ given by
\begin{equation} \label{BES}
w^r(t, x) = w^r(\frac{x}{t}) = \begin{cases}
w_- , \qquad x < w_-t, \\
\frac{x}{t}, \qquad w_-t \leq x \leq w_mt, \\
w_m , \qquad x > w_mt.
\end{cases}
\end{equation}
 It is easy to check that the 1-rarefaction wave $(v^r, u^r)(t, x) = (v^r, u^r)(x/t)$ to the Riemann problem \eqref{E}-\eqref{Ei}, defined in \eqref{BESintro}-\eqref{rareintro},  is given explicitly by
\begin{equation}\label{rare}
\begin{array}{ll}
  &\lambda_1(v^r(\frac xt)) = w^r(\frac xt), \\[1mm]
  &z_1(v^r(\frac xt), u^r(\frac xt))=  z_1 (v_-, u_-)=z_1 (v_m,u_m).
\end{array}
\end{equation}
 The self-similar 1-rarefaction wave $(v^r, u^r)(x/t)$ is Lipschitz continuous and satisfies the Euler system a.e. for $t>0$,
\begin{align}
\begin{aligned}\label{E-ra}
\left\{ \begin{array}{ll}
        v^r_t- u^r_x =0,\\[1mm]
       u^r_t  +p(v^r)_x= 0. 
        \end{array} \right.
\end{aligned}
\end{align}
Let $\delta_R:=|v_m-v_-|$ denote the strength of the rarefaction wave. Notice that $\delta_R\sim |u_m-u_-|$ by $\eqref{rare}_2$.

\subsection{Viscous shock wave}
We turn to the 2-viscous shock wave connecting $(v_m, u_m)$ and $(v_+,u_+)$ such that $(v_m, u_m)\in S_2(v_+,u_+)$. Recall the Rankine-Hugoniot condition \eqref{RH}
and the Lax entropy condition
\begin{equation}\label{ec}
\lambda_2(v_+)<\sigma<\lambda_2(v_m).
\end{equation}
The Riemann problem \eqref{E}-\eqref{Ei} admits a unique 2-shock solution
\begin{align}
\begin{aligned}\label{Shock}
(v^s,u^s)(t,x)=\left\{ \begin{array}{ll}
        (v_m,u_m),\quad x<\sigma t,\\[2mm]
       (v_+,u_+),\quad x>\sigma t. \\
        \end{array} \right.
\end{aligned}
\end{align}

By \eqref{RH}, it holds that
\begin{equation}
\sigma=\sqrt{-\frac{p(v_+)-p(v_m)}{v_+-v_m}}.
\end{equation}

By introducing a new variable $\xi=x-\sigma t$, the the 2-viscous shock wave $(\wt v^S, \wt u^S)(\xi)$ satisfies the ODE
\begin{equation}\label{VS}
\left\{
\begin{array}{ll}
\di -\sigma (\wt v^S)^\prime-( \wt u^S)^\prime=0,\qquad\qquad ^\prime=\frac{d}{d\xi},\\[3mm]
\di -\sigma( \wt u^S)^\prime+(p( \wt v^S))^\prime=\Big(\frac{( \wt u^S)^\prime}{ \wt v^S}\Big)^\prime,\\[3mm]
\di ( \wt v^S, \wt u^S)(-\infty)=(v_m, u_m), \qquad ( \wt v^S, \wt u^S)(+\infty)=(v_+, u_+).
\end{array}
\right.
\end{equation}

The properties of the 2-viscous shock wave $( \wt v^S, \wt u^S)(\xi)$ can be listed as follows. The proof of this lemma can be found in \cite{MN-S} or \cite{G} (see also \cite{Kang-V-NS17}).

\begin{lemma} \label{lemma1.3}
For any state  $(v_+,u_+)$, there exists a constant $C>0$ such that the following is true. For any end state such that $(v_m, u_m)\in R_2(v_+,u_+)$, there exists a unique solution  $( \wt v^S, \wt u^S)(\xi)$ to \eqref{VS}. Let $\delta_S$ denote the strength of the shock as $\delta_S:=|p(v_+)-p(v_m)|\sim|v_+-v_m|\sim |u_+-u_m|$. It holds that
	$$
	 \wt u^S_\xi<0, \qquad  \wt v^S_\xi >0,
	$$
	and
\begin{align*}
& |\wt v^S(\xi) -v_m|\leq C\deltas\ e^{-C\delta_S |\xi|}, \quad \xi<0,\\[1mm]
& |\wt v^S(\xi) -v_+|\leq C\deltas\ e^{-C\delta_S |\xi|}, \quad \xi>0,\\[1mm]
    &|( \wt v^S_\xi, \wt u^S_\xi)|\leq C\deltas^2\ e^{-C\delta_S |\xi|}, \quad\forall\xi\in\bbr,\\[1mm]
    &|( \wt v^S_{\xi\xi}, \wt u^S_{\xi\xi})|\leq C\delta_S |( \wt v^S_\xi, \wt u^S_\xi)|, \quad \forall\xi\in\bbr.
  \end{align*}		
\end{lemma}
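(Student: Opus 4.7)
The plan is to reduce the second-order ODE system \eqref{VS} to an autonomous scalar first-order ODE for $\wt v^S$, and then to read off all the quantitative information by linearizing at the two endpoints. Integrating the first equation of \eqref{VS} once and using the far-field condition at $-\infty$ gives $\wt u^S = u_m - \sigma(\wt v^S - v_m)$; substituting this into the second equation and integrating once more yields
\[
-\sigma\, \frac{\wt v^S_\xi}{\wt v^S} = F(\wt v^S), \qquad F(v) := \sigma^2(v - v_m) + p(v) - p(v_m).
\]
By the Rankine--Hugoniot condition \eqref{RH} one has $F(v_m) = F(v_+) = 0$, and the Lax condition \eqref{ec} gives $F'(v_m) < 0 < F'(v_+)$, which forces $F < 0$ throughout $(v_m, v_+)$. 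Hence $\wt v^S_\xi > 0$ on this interval and $\wt u^S_\xi = -\sigma \wt v^S_\xi < 0$, yielding the monotonicity claims.

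Existence and uniqueness (up to translation) then follow because $v_m$ and $v_+$ are hyperbolic equilibria of the scalar ODE $\wt v^S_\xi = -\frac{\wt v^S}{\sigma} F(\wt v^S)$, and standard phase-plane theory produces a monotone heteroclinic orbit between them; a normalization such as $\wt v^S(0) = \tfrac{1}{2}(v_m + v_+)$ selects a unique profile. To obtain the decay rates, I would use $\sigma^2 = -(p(v_+) - p(v_m))/(v_+ - v_m)$ and a second-order Taylor expansion of $p$ to compute
\[
F'(v_m) = \sigma^2 + p'(v_m) = -\tfrac{1}{2} p''(v_m)(v_+ - v_m) + O(\delta_S^2),
\]
and symmetrically at $v_+$, so $|F'(v_m)|, |F'(v_+)| \sim \delta_S$. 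Linearizing the scalar ODE at $v_m$ then gives exponential convergence of $\wt v^S - v_m$ to zero as $\xi \to -\infty$ at rate $c\delta_S$; a Gronwall-type comparison using the one-sided estimate $F(v) \le -c\delta_S (v - v_m)$ on the left half of $[v_m, v_+]$ globalizes this into $|\wt v^S(\xi) - v_m| \le C \delta_S e^{-C \delta_S |\xi|}$ for $\xi < 0$, and symmetrically for $\xi > 0$.

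Finally, the derivative bounds follow from the scalar ODE itself. Because $F$ has simple zeros at $v_m$ and $v_+$ with $|F'(v_m)|, |F'(v_+)| \sim \delta_S$, one has the factorization $|F(v)| \le C (v - v_m)(v_+ - v)$ on $[v_m, v_+]$ with $C$ independent of $\delta_S$; combined with the exponential bounds above, this gives $|\wt v^S_\xi| = \frac{\wt v^S}{\sigma}|F(\wt v^S)| \le C \delta_S^2 e^{-C \delta_S |\xi|}$, and $|\wt u^S_\xi| = \sigma |\wt v^S_\xi|$ yields the same bound. Differentiating the scalar ODE once more and using $|F'(\wt v^S)| \le C \delta_S$ on $[v_m, v_+]$ produces $|\wt v^S_{\xi\xi}|, |\wt u^S_{\xi\xi}| \le C \delta_S |(\wt v^S_\xi, \wt u^S_\xi)|$. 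The only real subtlety, and the step that needs the most care, is tracking the precise $\delta_S$-dependence of the constants in the Taylor expansion of $F$: the exponential rate must come out linear in $\delta_S$, and the prefactor in front of the derivative bound must come out quadratic in $\delta_S$. Once this bookkeeping is done the rest is routine scalar ODE analysis.
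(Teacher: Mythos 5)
Your proposal is correct and follows essentially the standard argument that the paper itself defers to (it cites \cite{MN-S}, \cite{G}, \cite{Kang-V-NS17} rather than proving the lemma): integrate \eqref{VS} once to get $\wt u^S=u_m-\sigma(\wt v^S-v_m)$ and the scalar ODE $-\sigma\wt v^S_\xi/\wt v^S=F(\wt v^S)$ with $F(v)=\sigma^2(v-v_m)+p(v)-p(v_m)$, then use convexity of $p$, the Rankine--Hugoniot and Lax conditions, and the Taylor expansion $F'(v_m)=-\tfrac12 p''(v_m)(v_+-v_m)+O(\delta_S^2)$ to extract the monotonicity, the $O(\delta_S)$ exponential rate, the $O(\delta_S^2)$ derivative bound, and the second-derivative estimate. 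The bookkeeping you flag (rate linear in $\delta_S$, prefactor quadratic via the factorization $|F(v)|\le C(v-v_m)(v_+-v)$) is exactly the content of the cited proofs, so no gap remains.
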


\subsection{Composite waves of viscous shock and rarefaction}
Given the end states $(v_\pm, u_\pm)\in\bbr^+\times\bbr$ in \eqref{in}, we consider the case that there exists a unique intermediate state $(v_m, u_m)$ such that 
\begin{equation}
(v_-,u_-)\in R_1(v_m, u_m), \qquad  (v_m, u_m)\in S_2 (v_+, u_+).
\end{equation}
We will consider a superposition wave:
\beq\label{SW}
\left( v^r(\frac xt)+\wt v^S(x-\s t)-v_m, u^r(\frac xt)+\wt u^S(x-\s t)-u_m \right),
\eeq
where $( v^r,  u^r)(\frac xt)$ is the 1-rarefaction wave defined in \eqref{rare} and $( \wt v^S, \wt u^S)(\xi)$ is the 2-viscous shock wave defined in Lemma \ref{lemma1.3}.


\section{Set-up of the problem, and proof of Theorem \ref{thm:main}}\label{sec-thm}
\setcounter{equation}{0}

\subsection{Construction of approximate rarefaction wave}
As in \cite{MN86}, we will consider a smooth approximate solution of the 1-rarefaction wave, by using the smooth solution to the Burgers equation:
\begin{equation} \label{ABE}
\begin{cases}
\displaystyle w_t + ww_x = 0, \\
\displaystyle w(0, x) = w_0(x) = \frac{w_m + w_-}{2} + \frac{w_m - w_-}{2} \tanh x.
\end{cases}
\end{equation}
Then, by the characteristic methods, the solution $w(t, x)$ of the problem \eqref{ABE} has the following properties and their proofs can be found in \cite{MN86}.
\begin{lemma} \label{lemma2.1}
  Suppose $w_m > w_-$ and set $\tilde{w} = w_m - w_-$. Then the problem \eqref{ABE} has a unique smooth global solution $w(t, x)$ such that
   
  (1)~$w_- < w(t, x) < w_m, ~w_x >0$ for $x \in \bbr$ and $t \geq 0$.
   
  (2)~The following estimates hold for all $t > 0$ and $p \in [1, + \infty]$:

  \begin{align*}
    &\|w_x(t, \cdot)\|_{L^p} \leq C \min(|\tilde{w}|, |\tilde{w}|^{1/p}t^{-1+1/p}), \\
    &\|w_{xx}(t, \cdot)\|_{L^p} \leq C \min(|\tilde{w}|,t^{-1}). 
  \end{align*}
  
  (3)~ If $w_m<0,$ then it holds that $\forall x\geq 0, \forall t\geq 0$,
   \begin{align*}
    &|w(t, x)-w_m| \leq \tilde{w} e^{-2(|x|+|w_m|t)}, \\
    &|(w_x, w_{xx})(t, x)|\leq C\tilde{w} e^{-2(|x|+|w_m|t)}.   
  \end{align*}
  
(4)~  It holds that $\forall x\le w_-t,\,  \forall t\geq 0$,
   \begin{align*}
    &|w(t, x)-w_-| \leq \tilde{w}e^{-2|x-w_-t|}, \\
    &|(w_x, w_{xx})(t, x)|\leq C\tilde{w} e^{-2|x-w_-t|}.
  \end{align*}

  (5)~$\di\lim_{t \to +\infty} \sup_{x \in \bbr} |w(t, x) - w^r(\frac xt)| = 0$.
\end{lemma}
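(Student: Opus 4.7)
The plan is to establish all five properties from a single object: the classical characteristics for the inviscid Burgers equation. Because the initial datum
\[
w_0(x) = \tfrac{w_m + w_-}{2} + \tfrac{w_m - w_-}{2}\tanh x
\]
is smooth, strictly increasing with $w_0'(x)=\tfrac{\tilde w}{2}\mathrm{sech}^2(x)>0$, and takes values in $(w_-,w_m)$, the characteristic map $X(x_0,t):=x_0+w_0(x_0)\,t$ has derivative $1+t\,w_0'(x_0)>0$ and is therefore a global diffeomorphism of $\mathbb{R}$ onto itself for every $t\geq 0$. This gives the unique smooth global solution via $w(t,X(x_0,t))=w_0(x_0)$. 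Property (1) is then immediate: $w$ inherits the range $(w_-,w_m)$ from $w_0$, and implicit differentiation of $x=x_0+w_0(x_0)t$ produces
\[
w_x(t,x)=\frac{w_0'(x_0)}{1+t\,w_0'(x_0)}>0.
\]

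For (2), the key elementary inequality $\frac{a}{1+ta}\le \min\{a,\,1/t\}$ for $a,t\ge 0$ yields the pointwise bound $w_x(t,x)\le \min\{w_0'(x_0),\,1/t\}$. Using the change of variables $dx=(1+t\,w_0')\,dx_0$, the $L^p$ norm of $w_x$ becomes $\int_{\mathbb{R}} \frac{(w_0')^{p}}{(1+t\,w_0')^{p-1}}\,dx_0$, which is controlled by splitting into the regions where $t\,w_0'\lesssim 1$ and $t\,w_0'\gtrsim 1$: in the first region the integrand is bounded by $(w_0')^p$ which is $L^1$ with mass $\lesssim \tilde w\,\|w_0'\|_\infty^{p-1}\lesssim \tilde w^{p}$, and in the second by $t^{-(p-1)}(w_0')$, whose integral is $\lesssim \tilde w\, t^{-(p-1)}$. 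Combining these gives the claimed $\min\{|\tilde w|,|\tilde w|^{1/p}t^{-1+1/p}\}$ bound. The bound on $w_{xx}$ is obtained by differentiating $w_x = w_0'(x_0)/(1+t w_0'(x_0))$ once more and using $|w_0''|\lesssim w_0'$, then estimating the resulting expression by the same splitting argument.

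For the exponential decay (3)--(4), I use $w_0(x_0)-w_m = -\tilde w/(1+e^{2x_0})$, which satisfies $|w_0(x_0)-w_m|\le \tilde w e^{-2x_0}$ for $x_0\ge 0$, and the symmetric statement at $-\infty$; an analogous bound holds for $w_0'$ and $w_0''$. To obtain (3), fix $x\ge 0$ with $w_m<0$ and let $x_0$ be the characteristic base. Since $w_0(x_0)\le w_m<0$, the relation $x=x_0+w_0(x_0)t$ forces $x_0\ge x+|w_m|t$, hence $x_0 \ge |x|+|w_m|t$, and the exponential bound on $|w_0-w_m|$ translates into the stated decay of $|w(t,x)-w_m|$. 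The derivative bounds follow the same way after expressing $w_x,w_{xx}$ in terms of $w_0',w_0''$ at $x_0$. Property (4) for $x\le w_-t$ is proved by the mirror-image argument: here $x_0\le x-w_-t\le 0$, so $|x_0|\ge |x-w_-t|$, and $|w_0(x_0)-w_-|\le \tilde w e^{-2|x_0|}$.

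For the uniform convergence (5) to the rarefaction fan, rewrite the characteristic relation as $w_0(x_0)=x/t - x_0/t$. Working in the three regions $x\le w_- t$, $w_- t\le x\le w_m t$, and $x\ge w_m t$ separately: in the middle region a standard iteration/monotonicity argument shows $w_0(x_0)\to x/t$ uniformly on compacts of $(w_-,w_m)$ as $t\to\infty$, so $w(t,x)\to x/t=w^r(x/t)$; in the two outer regions (3)--(4) give exponential decay of $w(t,x)-w_\pm$ to zero. A standard $\varepsilon$-argument matching the three regions at the endpoints yields the uniform limit. The only mildly delicate point in the whole plan is the $L^p$ bound on $w_{xx}$ in (2), since one must keep track of the $(1+tw_0')^{-k}$ denominators after differentiation; everything else is a direct consequence of the explicit characteristic formula together with the exponential tails of $\tanh$ and $\mathrm{sech}^2$.
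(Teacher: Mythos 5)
Your characteristics-based argument is exactly the standard proof of this lemma; the paper does not prove it at all but defers to Matsumura--Nishihara \cite{MN86}, explicitly attributing it to ``the characteristic methods,'' and your computations (the diffeomorphism property of $x_0\mapsto x_0+w_0(x_0)t$, the formula $w_x=w_0'/(1+tw_0')$, the $L^1$--$L^\infty$ bounds combined with the exponential tails of $\mathrm{sech}^2$) are correct and reproduce that proof. The only imprecision is in part (5): you invoke (3) for the outer region $x\ge w_m t$, but (3) is only stated under the hypothesis $w_m<0$, so for general $w_m$ you should instead note that the characteristic relation $x_0=x-w_0(x_0)t\ge t(w_m-w_0(x_0))=t\,\tilde w/(1+e^{2x_0})$ forces $x_0\to+\infty$ uniformly as $t\to\infty$, whence $|w-w_m|\le\tilde w e^{-2x_0}\to0$ -- a one-line fix with the same method.
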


We now construct the smooth approximate 1-rarefaction wave $(\wt v^R, \wt  u^R)(t, x)$ of the 1-rarefaction wave fan $(v^r, u^r)(\frac xt)$ by
\begin{align}
  \begin{aligned} \label{AR}
   &\lambda_1(v_-) = w_-,\ \lambda_1(v_m) = w_m,\\
    &\lambda_1(\wt   v^R)(t, x) = w(1+t, x), \\
    &z_1(\wt   v^R, \wt   u^R)(t, x) = z_1 (v_-, u_-)=z_1 (v_m,u_m),
  \end{aligned}
\end{align}
where $w(t,x)$ is the smooth solution to the Burgers equation in \eqref{ABE}.
One can easily check that the above approximate rarefaction wave $(\wt   v^R,\wt   u^R)$ satisfies the system:
\begin{equation}  \label{ARW}
  \begin{cases}
    \displaystyle \wt   v^R_t -\wt u^R_x = 0, \\
    \displaystyle \wt  u^R_t + p(\wt  v^R)_x = 0. \\
  \end{cases}
\end{equation}
The following lemma comes from Lemma \ref{lemma2.1} (cf. \cite{MN86}).
\begin{lemma} \label{lemma1.2}
	The smooth approximate 1-rarefaction wave $(\wt  v^R, \wt  u^R)(t,x)$ defined in \eqref{AR} satisfies the following properties. Let $\delta_R$ denote the rarefaction wave strength as $\delta_R := |v_m - v_-|\sim |u_m-u_-|$.\\
	
	(1)~$ \wt  u^R_x = \frac{2}{(\gamma + 1)\wt  v^R}w_x > 0$ and $ \wt  v^R_x = \frac{( \wt  v^R)^{\frac{\gamma+1}{2}}}{\sqrt\gamma} \wt  u^R_x>0,$ for all $x \in \bbr$ and $t \geq 0$.
	
	(2)~The following estimates hold for all $t \geq 0$ and $p \in [1, + \infty]$:
	\begin{align*}
	&\|( \wt  v^R_x, \wt  u^R_x)\|_{L^p} \leq C \min\{\delta_R, \delta_R^{1/p}(1+t)^{-1+1/p}\}, \\
	&\|( \wt  v^R_{xx}, \wt  u^R_{xx})\|_{L^p} \leq C \min\{\delta_R, (1+t)^{-1}\}, \\
	&|\wt  u^R_{xx}| \leq C |\wt  u^R_{x}| ,\quad \forall x\in\bbr.
	\end{align*}
	
	(3)~ For $x\geq 0, t\geq 0,$ it holds that
	\begin{align*}
    &|( \wt  v^R, \wt  u^R)(t, x)-(v_m,u_m)| \leq C\delta_R \ e^{-2(|x|+|\lambda_1(v_m)|t)}, \\
    &|( \wt  v^R_x, \wt  u^R_x)(t, x)|\leq C\delta_R\ e^{-2(|x|+|\lambda_1(v_m)|t)}.
  \end{align*}
  
  (4)~ For $x\le\lambda_1(v_-) t$ and $ t\geq 0,$ it holds that
	\begin{align*}
    &|( \wt  v^R, \wt  u^R)(t, x)-(v_-,u_-)| \leq C\delta_R \ e^{-2|x-\lambda_1(v_-)t|}, \\
    &|( \wt  v^R_x, \wt  u^R_x)(t, x)|\leq C\delta_R\ e^{-2|x-\lambda_1(v_-)t|}.
  \end{align*}
  
(5)~$\di \lim_{t \to +\infty} \sup_{x \in \bbr} |( \wt  v^R, \wt  u^R)(t, x) - (v^r, u^r)( \frac xt)| = 0$.
\end{lemma}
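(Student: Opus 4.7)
My plan is to deduce every assertion from the corresponding property of the Burgers solution $w(1+t,\cdot)$ provided in Lemma \ref{lemma2.1}, since the defining relation \eqref{AR} expresses $(\wt v^R,\wt u^R)$ algebraically in terms of $w$ through the smooth monotone map $v\mapsto\lambda_1(v)$ together with the conservation of the Riemann invariant $z_1$.

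First I would record that, for $p(v)=v^{-\gamma}$, the first characteristic speed is $\lambda_1(v)=-\sqrt{\gamma}\,v^{-(\gamma+1)/2}$, which is smooth with $\lambda_1<0$ and $\lambda_1'>0$ on $\bbr_+$. Differentiating the identity $\lambda_1(\wt v^R)=w(1+t,x)$ in $x$ yields $\wt v^R_x=w_x/\lambda_1'(\wt v^R)$, while differentiating $z_1(\wt v^R,\wt u^R)=\text{const}$ (cf.\ \eqref{rareintro}) yields $\wt u^R_x=-\lambda_1(\wt v^R)\,\wt v^R_x$. Inserting the explicit expression for $\lambda_1$ produces the formulas claimed in item (1), and strict positivity is immediate since $w_x>0$ by Lemma \ref{lemma2.1}(1) and both $\lambda_1'>0$ and $-\lambda_1>0$.

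Since $w$ takes values only in the bounded interval $(w_-,w_m)$, the relation $\lambda_1(\wt v^R)=w$ forces $\wt v^R$ to remain in a compact subinterval of $\bbr_+$ between $v_-$ and $v_m$. Consequently the coefficients linking $(\wt v^R_x,\wt u^R_x)$ to $w_x$ in item (1) are bounded above and below by positive constants, so the $L^p$ bounds in item (2) on first derivatives transfer directly from the corresponding bounds on $w_x$ in Lemma \ref{lemma2.1}(2). Differentiating once more shows that $(\wt v^R_{xx},\wt u^R_{xx})$ is a bounded combination of $w_{xx}$ and $w_x^2$; the $L^p$ bound on $w_{xx}$ together with the interpolation $\|w_x^2\|_{L^p}\leq\|w_x\|_{L^\infty}\|w_x\|_{L^p}\leq C\min\{\delta_R,(1+t)^{-1}\}$ yields the second-derivative estimate in item (2). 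The pointwise inequality $|\wt u^R_{xx}|\leq C|\wt u^R_x|$ reduces to $|w_{xx}|\leq C|w_x|$ for the Burgers solution, which follows by propagating along characteristics the obvious bound $|w_0''|\leq C|w_0'|$ for the $\tanh$ initial datum in \eqref{ABE}.

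Finally, assertions (3) and (4) are direct transcriptions of Lemma \ref{lemma2.1}(3)--(4): the smoothness of $\lambda_1^{-1}$ on the compact range of $w$ converts the exponential bounds on $w-w_m$ and on $(w_x,w_{xx})$ in the region $x\geq 0$ (resp.\ on $w-w_-$ and $(w_x,w_{xx})$ in $x\leq w_- t$) into identical exponential bounds on $\wt v^R-v_m$ and $(\wt v^R_x,\wt u^R_x)$ (resp.\ $\wt v^R-v_-$), while the bounds on $\wt u^R$ follow from the Riemann-invariant relation. Assertion (5) is a consequence of Lemma \ref{lemma2.1}(5) combined with the uniform continuity of $\lambda_1^{-1}$ on the relevant range, together with the observation that replacing $1+t$ by $t$ is harmless in the sup-norm limit. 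The main, purely technical, obstacle is the characteristic-propagation argument used to establish the pointwise control $|w_{xx}|\leq C|w_x|$; the rest of the proof is a routine translation of Lemma \ref{lemma2.1} through the smooth change of variables.
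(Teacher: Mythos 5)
Your proposal is correct and is essentially the proof the paper intends: the paper gives no argument beyond citing Lemma \ref{lemma2.1} and \cite{MN86}, and your reconstruction (transferring all bounds through the smooth change of variables $\lambda_1(\wt v^R)=w(1+t,x)$ and the Riemann-invariant relation, plus the explicit characteristic formulas $w_x=w_0'(x_0)/(1+tw_0'(x_0))$, $w_{xx}=w_0''(x_0)/(1+tw_0'(x_0))^3$ and $|w_0''|\le 2|w_0'|$ for the $\tanh$ datum to get $|w_{xx}|\le 2|w_x|$, hence $|\wt u^R_{xx}|\le C|\wt u^R_x|$) is the standard and complete route. One small remark: your computation actually gives $\wt u^R_x=\frac{2\wt v^R}{\gamma+1}w_x$, which is what is consistent with the second identity in item (1) and with $w_x=\lambda_1'(\wt v^R)\wt v^R_x$; the factor $\frac{2}{(\gamma+1)\wt v^R}$ printed in the lemma is a typo (immaterial to all subsequent estimates, since $\wt v^R$ is bounded above and below).
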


\subsection{Local in time estimates on the solution} 
For simplification of our analysis, we rewrite the compressible Navier-Stokes system \eqref{NS} into the following system, based on the change of variable associated to the speed of propagation of the shock $(t,x)\mapsto (t, \xi=x-\s t)$: 
 \begin{align}
\begin{aligned}\label{NS-1}
\left\{ \begin{array}{ll}
        v_t-\sigma v_\xi- u_\xi =0,\\
       u_t -\sigma u_\xi +p(v)_\xi=  (\frac{u_\xi}{v})_\xi. \\
        \end{array} \right.
\end{aligned}
\end{align}
We will consider stability of the solution to \eqref{NS-1} around the superposition wave of the approximate rarefaction wave and  the viscous shock wave shifted by $\mb X(t)$ (to be defined in \eqref{X(t)}) : 
\beq\label{shwave}
(\wt v_{-\mb X}, \wt u_{-\mb X}) (t,\xi) := \left(\wt v^R(t,\xi+\sigma t)+\wt v^S(\xi -\mb X(t))-v_m,\wt u^R(t,\xi+\sigma t)+\wt u^S(\xi-\mb X(t))-u_m \right) .
\eeq

For any initial $H^1$ perturbation of the superposition of waves \eqref{shwave}, there exists a global strong solution to \eqref{NS-1} (see for instance \cite{MV-sima}). We will use a standard method of continuation to show the global in time control of this perturbation. 
For that, we first recall local in time estimates for strong solutions to \eqref{NS} (and so also for \eqref{NS-1}). They can be found in  \cite{Solo} (see also \cite[Proposition 2.2]{MV-sima}).
\begin{proposition} \label{prop:soln}
Let $\underline v$ and $\underline u$ be smooth monotone functions such that
\beq\label{sm}
\underline v(x) = v_\pm \quad\mbox{and}\quad \underline u(x) = u_\pm\quad\mbox{for } \pm x \ge 1.
\eeq
For any constants $M_0, M_1,  \underline \kappa_0,  \overline \kappa_0, \underline\kappa_1, \overline\kappa_1$ with $M_1>M_0>0$ and $ \overline \kappa_1>\overline \kappa_0>  \underline \kappa_0>\underline\kappa_1>0$, there exists a constant $T_0>0$ such that if 
\begin{align*}
\begin{aligned}
&\|v_0-\underline v\|_{H^1(\bbr)} +\|u_0 -\underline u\|_{H^1(\bbr)}  \le M_0,\\
&0< \underline \kappa_0 \leq v_0(x)\leq  \overline \kappa_0, \qquad  \forall x\in \bbr, \\
\end{aligned}
\end{align*}
then \eqref{NS-1} has a unique solution $(v,u)$ on $[0,T_0]$ such that 
\begin{align*}
\begin{aligned}
&v -\underline v \in C([0,T_0];H^1(\bbr)), \\
& u -\underline u \in C([0,T_0];H^1(\bbr)) \cap  L^2(0,T_0;H^2(\bbr)).
\end{aligned}
\end{align*}
and
\[
\|v-\underline v\|_{L^\infty(0,T_0;H^1(\bbr))} + \|u-\underline u\|_{L^\infty(0,T_0;H^1(\bbr))} \le M_1.
\]
Moreover:
\beq\label{bddab}
\underline  \kappa_1 \le v(t,x) \le \overline  \kappa_1,\qquad \forall (t,x)\in [0,T_0]\times \bbr.
\eeq
\end{proposition}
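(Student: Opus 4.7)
The plan is to prove this by a standard fixed-point iteration adapted to the quasilinear parabolic-transport structure of \eqref{NS-1}, following the classical strategy for the Lagrangian Navier-Stokes equations. The first equation is pure transport in $v$ with source $u_\xi$, while the second is uniformly parabolic in $u$ once $v$ is controlled pointwise from above and below; this clean separation of roles is what makes the construction work.

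First, I would introduce the perturbations $\phi := v - \underline v$ and $\psi := u - \underline u$, which satisfy a system with forcing terms coming from the fact that $(\underline v,\underline u)$ need not solve \eqref{NS-1} exactly, but with initial data controlled by $M_0$. I would then set up the iteration $(v^{(n)},u^{(n)}) \mapsto (v^{(n+1)},u^{(n+1)})$ by decoupling: define $v^{(n+1)}$ as the solution of the linear transport equation $v^{(n+1)}_t - \sigma v^{(n+1)}_\xi = u^{(n)}_\xi$ with $v^{(n+1)}(0,\cdot)=v_0$, and $u^{(n+1)}$ as the solution of the linear parabolic equation obtained by freezing the coefficient $1/v$ at $v^{(n+1)}$ and treating $p(v^{(n+1)})_\xi$ as a source, with $u^{(n+1)}(0,\cdot)=u_0$. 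Both linear problems are solvable in the relevant spaces by classical theory.

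Second, I would close uniform a priori estimates on a short interval $[0,T_0]$. For $v^{(n+1)}$, integrating the transport equation along the characteristics $\xi \mapsto \xi + \sigma t$ yields
\[
v^{(n+1)}(t,\xi+\sigma t) = v_0(\xi) + \int_0^t u^{(n)}_\xi(s,\xi+\sigma s)\,ds,
\]
so that $\|v^{(n+1)}(t,\cdot) - v_0(\cdot+\sigma t)\|_{L^\infty} \le \int_0^t \|u^{(n)}_\xi(s)\|_{L^\infty}\,ds$. Combined with the embedding $H^2(\bbr)\hookrightarrow W^{1,\infty}(\bbr)$ and the a priori control of $\|u^{(n)}\|_{L^2(0,T;H^2)}$ from parabolic theory, the right-hand side is of order $T_0^{1/2} M_1$, so choosing $T_0$ small enough forces $\underline\kappa_1 \le v^{(n+1)} \le \overline\kappa_1$, giving \eqref{bddab} at the iterative level. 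Differentiating the transport equation yields an analogous $L^\infty_t L^2_\xi$ bound on $v^{(n+1)}_\xi$ in terms of $\|u^{(n)}_{\xi\xi}\|_{L^2_tL^2_\xi}$. For $u^{(n+1)}$, with $1/v^{(n+1)}$ now bounded above and below, a standard energy estimate (multiplying by $u^{(n+1)}-\underline u$ and then by $-(u^{(n+1)}-\underline u)_{\xi\xi}$) closes the $L^\infty_t H^1_\xi \cap L^2_t H^2_\xi$ bound on $u^{(n+1)}-\underline u$, with the source $p(v^{(n+1)})_\xi$ controlled by the $H^1$ bound on $v^{(n+1)}$ via \eqref{pressure2} applied to $p'$.

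Finally, I would show contraction of the map $(v^{(n)},u^{(n)}) \mapsto (v^{(n+1)},u^{(n+1)})$ in the weaker norm $L^\infty_t L^2_\xi \times L^\infty_t L^2_\xi$ on $[0,T_0]$ (shrinking $T_0$ if necessary), differences satisfying linear equations with coefficients controlled by the uniform $H^1$ bounds. Banach's fixed point theorem delivers a unique solution on $[0,T_0]$ with the stated regularity, and the pointwise bounds \eqref{bddab} pass to the limit. The main obstacle, as often in quasilinear parabolic problems, is synchronizing the choice of $T_0$: one must simultaneously ensure (i) the transport estimate keeps $v$ in the compact interval $[\underline\kappa_1,\overline\kappa_1]$ where the parabolic operator is uniformly elliptic, (ii) the parabolic estimate keeps $u-\underline u$ in the ball of radius $M_1$ in $L^\infty_t H^1 \cap L^2_t H^2$, and (iii) the contraction constant stays strictly below $1$. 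Since all three requirements are of the form $C(M_1,\underline\kappa_1,\overline\kappa_1)\,T_0^{1/2} \le 1/2$, a single choice of $T_0$ depending only on the listed constants suffices.
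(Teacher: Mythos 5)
Your proposal is a correct, standard local well-posedness argument, but it is worth noting that the paper does not prove Proposition \ref{prop:soln} at all: it simply cites the classical literature (Solonnikov, and Proposition 2.2 of Mellet--Vasseur) for these local-in-time estimates. So there is no ``paper proof'' to match; what you have supplied is a self-contained substitute along the usual lines, and the overall architecture --- decoupled iteration with $v$ solved by transport and $u$ by a linear uniformly parabolic problem, uniform $L^\infty_tH^1\cap L^2_tH^2$ bounds on a short interval, pointwise bounds on $v$ via the characteristic representation, and contraction in the weaker $L^\infty_tL^2$ topology --- is exactly how such results are established. Two small points deserve care if you were to write this out in full. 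First, your characteristic formula has the wrong sign: for $v_t-\sigma v_\xi=u_\xi$ the characteristics are $\xi\mapsto\xi-\sigma t$, i.e. $\frac{d}{dt}v(t,\xi_0-\sigma t)=u_\xi(t,\xi_0-\sigma t)$; with $\xi+\sigma t$ you would pick up an extra $2\sigma v_\xi$. This does not affect any estimate. Second, the $L^2_tH^2$ regularity for the frozen-coefficient parabolic step is slightly delicate because the coefficient $1/v^{(n+1)}$ is only $H^1$ in $\xi$, so the commutator term $(1/v^{(n+1)})_\xi u^{(n+1)}_\xi$ must be absorbed using the interpolation $\|u_\xi\|_{L^\infty}\le C\|u_\xi\|_{L^2}^{1/2}\|u_{\xi\xi}\|_{L^2}^{1/2}$ inside the very estimate being closed (a standard bootstrap via Galerkin or difference quotients); your sketch implicitly uses this but should state it, since it is the one place where the smallness of $T_0$ interacts with the $H^2$ norm being estimated rather than with data already controlled at the previous iteration.
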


\subsection{Construction of shift} For the continuation argument, the main tool is the a priori estimates of  Proposition \ref{prop2}. These estimates depend on the shift function, and for this reason, we are giving its definition right now. The definition depends on the weight function  $a:\bbr\to\bbr$ defined in \eqref{weight}. For now, we will only use the fact that $\|a\|_{C^1(\bbr)}\leq 2$. 
We then define the shift  $\mb X$ as a solution to the ODE:
\begin{equation}\label{X(t)}
\left\{
\begin{array}{ll}
\di \dot{\mb{X}}(t)=-\frac{M}{\delta_S}\Big[\int_{\mathbb{R}}\frac{a(\xi-\mb{X})}{\sigma} \partial_\xi  \wt h^S(\xi-\mb{X}) (p(v)-p(\wt v_{-\mb X})) d\xi\\[4mm]
\di \qquad \qquad\qquad  -\int_{\mathbb{R}}a(\xi-\mb{X}) \partial_\xi p(\wt v^S(\xi-\mb{X})) (v-\wt v_{-\mb X}) d\xi\Big],\\
\di \mb X(0)=0,
\end{array}
\right.
\end{equation}
where $M$ is the specific constant chosen as $M:=\frac{5(\gamma+1)\sigma_m^3}{8\gamma p(v_m)}$ with $\s_m:=\sqrt{-p'(v_m)}$, which will be used in the proof of Lemma \ref{lem-sharp} (see \eqref{definitionM}). \\

The following lemma ensures that \eqref{X(t)} has a unique absolutely continuous solution defined on any interval in time $[0,T]$ for which   \eqref{bddab} is verified.
\begin{lemma}\label{lem:xex}
For any $c_1,c_2>0$, there exists a constant $C>0$ such that the following is true.  For any $T>0$, and any   function $v\in L^\infty ((0,T)\times \R)$ 
verifying
\beq\label{odes}
c_1 \le v(t,x)\le c_2,\qquad \forall (t,x)\in [0,T]\times \bbr,
\eeq
 the ODE \eqref{X(t)} has a unique absolutely continuous solution $\mb X$ on $[0,T]$. Moreover, 
\beq\label{roughx}
|{\mb X}(t)| \le Ct,\quad \forall t\le T.
\eeq
\end{lemma}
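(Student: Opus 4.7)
Write the right-hand side of \eqref{X(t)} as $F(t,Y)$ with $Y\in\bbr$ replacing $\mb X$, namely
\begin{equation*}
F(t,Y)=-\frac{M}{\delta_S}\Bigl[\int_\bbr \frac{a(\xi-Y)}{\sigma}\D_\xi\wt h^S(\xi-Y)\bigl(p(v(t,\xi))-p(\wt v_{-Y}(t,\xi))\bigr)d\xi-\int_\bbr a(\xi-Y)\D_\xi p(\wt v^S(\xi-Y))\bigl(v(t,\xi)-\wt v_{-Y}(t,\xi)\bigr)d\xi\Bigr].
\end{equation*}
The plan is to show that $F$ satisfies Carath\'eodory-type hypotheses (measurable in $t$, globally Lipschitz in $Y$, and uniformly bounded on $[0,T]\times\bbr$) and then invoke the Carath\'eodory existence/uniqueness theorem to produce the unique absolutely continuous solution $\mb X$, with the linear bound \eqref{roughx} coming immediately from $\|\dot{\mb X}\|_{L^\infty}\le C$.

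\textbf{Boundedness of $F$.} I would first collect the elementary pointwise bounds: $|a|\le 2$ and $|a'|\le 2$ by hypothesis, $c_1\le v\le c_2$ by \eqref{odes}, and $\wt v_{-Y}(t,\xi)=\wt v^R(t,\xi+\sigma t)+\wt v^S(\xi-Y)-v_m$ is uniformly bounded and uniformly bounded away from $0$ by Lemma \ref{lemma1.2}(1) and Lemma \ref{lemma1.3}. Applying \eqref{pressure2} of Lemma \ref{lem-useful} gives $|p(v)-p(\wt v_{-Y})|\le C$ and $|v-\wt v_{-Y}|\le C$. The profile derivatives $\D_\xi\wt h^S(\xi-Y)$ and $\D_\xi p(\wt v^S(\xi-Y))$ both enjoy the exponential decay $|\cdot|\le C\delta_S^2 e^{-C\delta_S|\xi-Y|}$ from Lemma \ref{lemma1.3} (the bound on $\wt h^S$ follows from those on $\wt u^S$ and $(\ln\wt v^S)_\xi$). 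Integrating then yields $|F(t,Y)|\le C$ uniformly in $(t,Y)\in[0,T]\times\bbr$, where $C$ depends only on $c_1,c_2,\delta_S$ and the wave data, so that $|\dot{\mb X}|\le C$ whenever a solution exists.

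\textbf{Lipschitz dependence on $Y$.} The subtle point — and the only non-routine issue, since $v$ is merely in $L^\infty$ — is that all $Y$-dependence of $F$ enters through the smooth translated profiles $a(\xi-Y)$, $\wt h^S(\xi-Y)$, $\wt v^S(\xi-Y)$, never through $v$ itself. I would differentiate the integrand of each integral in $Y$ under the integral sign; each resulting term is the product of a uniformly bounded factor ($p(v)-p(\wt v_{-Y})$, $v-\wt v_{-Y}$, or $p'(\wt v_{-Y})$) with a profile derivative evaluated at $\xi-Y$, all of which again decay like $e^{-C\delta_S|\xi-Y|}$ thanks to the bounds on $a,a',\D_\xi^j\wt h^S,\D_\xi^j\wt v^S$ ($j=1,2$) in Lemma \ref{lemma1.3}. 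Integrating gives $|\D_Y F(t,Y)|\le C_L$ uniformly in $(t,Y)$, hence $|F(t,Y_1)-F(t,Y_2)|\le C_L|Y_1-Y_2|$.

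\textbf{Conclusion.} For each fixed $Y$, the measurability in $t$ of $\xi\mapsto v(t,\xi)$ together with smoothness of the profiles yields measurability of $t\mapsto F(t,Y)$ via Fubini. Thus $F$ satisfies the hypotheses of the Carath\'eodory existence and uniqueness theorem, producing a unique absolutely continuous solution $\mb X$ of \eqref{X(t)} on a maximal interval $[0,\tau^*)\subset[0,T]$. The uniform bound $|\dot{\mb X}|\le C$ precludes finite-time blow-up, so $\tau^*=T$, and integration of the same bound gives $|\mb X(t)|=|\mb X(t)-\mb X(0)|\le Ct$ for all $t\le T$, which is \eqref{roughx}.
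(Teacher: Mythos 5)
Your proposal is correct and follows essentially the same route as the paper: bound $F(t,\cdot)$ and its $Y$-derivative uniformly using $\|a\|_{C^1}\le 2$, the exponential decay of the shock-profile derivatives from Lemma \ref{lemma1.3}, and the $L^\infty$ bounds on $v$ and the wave profiles, then apply a Carath\'eodory/Cauchy--Lipschitz theorem for ODEs with merely measurable time dependence (the paper invokes the specific variant \cite[Lemma A.1]{CKKV}). The linear bound \eqref{roughx} follows in both cases from $|\dot{\mb X}|\le C$.
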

\begin{proof}
We will use the following lemma as a simple adaptation of the well-known Cauchy-Lipschitz theorem.  \\

 \begin{lemma} \cite[Lemma A.1]{CKKV} \label{lem_ckkv}
 Let $p>1$ and $T>0$. Suppose that a function 
 $F:[0,T]\times\bbr\rightarrow\bbr$  satisfies 
 $$\sup_{x\in\bbr }|F(t,x)|\leq f(t) \quad\mbox{and}\quad
\sup_{x,y\in\bbr,x\neq y }\Big|\frac{F(t,x)-F(t,y)}{x-y}\Big|\leq g(t)
\quad \mbox{for } t\in[0,T] $$ for some functions $f \in L^1(0,T)$ and $\, g\in L^p(0,T)$. Then for any $x_0\in\bbr$, there exists a unique absolutely continuous function $\mb{X}:[0,T]\rightarrow \bbr$ satisfying
\beq\label{ode_eq}\left\{ \begin{array}{ll}
        \dot{\mb{X}}(t) = F(t,\mb{X}(t))\quad\mbox{for \textit{a.e.} }t\in[0,T],\\
       \mb{X}(0)=x_0.\end{array} \right.\eeq
 \end{lemma}

To apply the above lemma, let $F(t,\mb{X})$ denote the right-hand side of the ODE \eqref{X(t)}. \\
Then the sufficient conditions of the above lemma are verified thanks to the facts that $\|a\|_{C^1(\bbr)}\leq 2$, $\|\wt v^S\|_{C^2(\bbr)}\leq \max\{1,v_+\}$, and $\|\tilde v^S_\xi\|_{L^1} \le C\delta_S$.
Indeed, using \eqref{odes}, we find that for some constant $C>0$,
\beq\label{f1t}
\sup_{\mb{X}\in\bbr}|F(t,\mb{X})| \le \frac{C}{\deltas}  \||p(v)|+|p(\wt v_{-\mb{X}})|+|v|+|\wt v_{-\mb{X}}| \|_{L^\infty(\bbr)} \int_\bbr |\wt v^S_\xi| d\xi \le C,
\eeq
and
\begin{align*}
\begin{aligned}
\sup_{\mb{X}\in \bbr} |\partial_{\mb{X}}F(t,\mb{X})| &\le  \frac{C}{\deltas} \|a\|_{C^1}\||p(v)|+|p(\wt v_{-\mb{X}})|+|v|+|\wt v_{-\mb{X}}| \|_{L^\infty(\bbr)}  \int_\bbr |\wt v^S_\xi| d\xi \le C.
\end{aligned}
\end{align*}
Especially, since $|\dot{\mb X}(t)| \le C$ by \eqref{f1t}, we have \eqref{roughx}.
\end{proof}

\subsection{A priori estimates}
First, it follows from \eqref{ARW} that $(v,u)(t,\xi):=(\wt v^R(t,\xi+\sigma t),\wt u^R(t,\xi+\sigma t)) $ verifies 
\begin{equation}  \label{rarexi}
  \begin{cases}
    \displaystyle v_t -\s  v_\xi -u_\xi = 0, \\
    \displaystyle u_t -\s  u_\xi + p(v)_\xi = 0.
  \end{cases}
\end{equation}
Therefore, using \eqref{VS} and \eqref{rarexi} we find that the approximated combination of waves $(\wt v_{-\mb X}, \wt u_{-\mb X}) $ defined in \eqref{shwave} solves the system: 
\begin{align}
\begin{aligned}\label{sws-u}
\left\{ \begin{array}{ll}
       \di (\wt v_{-\mb X})_t-\sigma(\wt v_{-\mb X})_\xi+\dot{\mb{X}}(t)(\wt v^S)^{-\mb X}_\xi-(\wt u_{-\mb X})_\xi =0,\\[2mm]
      \di (\wt u_{-\mb X})_t-\sigma(\wt u_{-\mb X})_\xi+\dot{\mb{X}}(t) (\wt u^S)^{-\mb X}_\xi+(p(\wt v_{-\mb X}))_\xi=\left(\frac{(\wt u_{-\mb X})_\xi}{\wt v_{-\mb X}}\right)_\xi+F_1 +F_2,
        \end{array} \right.
\end{aligned}
\end{align}
where $(\wt v^S)^{-\mb X}_\xi:=\wt v^S_\xi(\xi -\mb X(t))$, $(\wt u^S)^{-\mb X}_\xi:=\wt u^S_\xi(\xi -\mb X(t))$ and
\beq\label{ff1f2}
F_1=\left(\frac{ ({\wt u}^S_\xi)^{-\mb{X}} }{ ({\wt v}^S)^{-\mb{X}}}\right)_\xi - \left(\frac{(\wt u_{-\mb X})_\xi}{\wt v_{-\mb X}}\right)_\xi ,\quad F_2=\big[p(\wt v_{-\mb X})-p(\wt v^R)-p\big((\wt v^S)^{-\mb X}\big)\big]_\xi.
\eeq
Note that the shift $\mb{X}(t)$ is performed only in the shock layer. The terms $F_1$ and $F_2$ are the wave interactions due to nonlinearity of the viscosity and the pressure and error terms due to the inviscid rarefaction.\\

We now state the key step for the proof of Theorem \ref{thm:main}. 

\begin{proposition} \label{prop2}
For a given point $(v_+,u_+)\in\bbr^+\times\bbr$, there exist positive constants $C_0, \delta_0,\eps_1$ such that the following holds.\\
Suppose that $(v,u)$ is the solution to \eqref{NS-1} on $[0,T]$ for some $T>0$,  and $(\wt v_{-\mb X}, \wt u_{-\mb X})$ is defined in \eqref{shwave} with $\mb X$ being the absolutely continuous solution to \eqref{X(t)} with weight function $a$ defined in \eqref{weight}. Assume that both the rarefaction and shock waves strength satisfy $\deltar, \deltas<\delta_0$ and that 
\begin{align*}
\begin{aligned}
&v -\wt v_{-\mb X} \in C([0,T];H^1(\bbr)), \\
& u -\wt u_{-\mb X} \in C([0,T];H^1(\bbr)) \cap  L^2(0,T;H^2(\bbr)),
\end{aligned}
\end{align*}
and 
\beq\label{apri-ass}
\|v-\wt v_{-\mb X}\|_{L^\infty(0,T;H^1(\bbr))} + \|u-\wt u_{-\mb X}\|_{L^\infty(0,T;H^1(\bbr))} \le \eps_1.
\eeq
Then,  for all $t\le T$,
\begin{align}
\begin{aligned}\label{finest}
&\sup_{t\in[0,T]}\Big[\|v-\wt v_{-\mb X}\|_{H^1(\bbr)} +\|u-\wt u_{-\mb X}\|_{H^1(\bbr)}\Big] +\sqrt{\deltas\int_0^t|\dot{\mb{X}}|^2 ds} \\
&\qquad\quad +\sqrt{\int_0^t \big(\mathcal{G}^S(U)+\mathcal{G}^R(U)+D(U)+D_1(U)+D_2(U) \big) ds}\\
&\quad\le C_0\left( \|v_0-\wt v(0,\cdot)\|_{H^1(\bbr)} +\|u_0 -\wt u(0,\cdot)\|_{H^1(\bbr)} \right) + C_0\deltar^{1/6} ,
\end{aligned}
\end{align}
where $C_0$ is independent of $T$ and
\begin{align}
\begin{aligned}\label{maingood}
&\mathcal{G}^S(U):=\int_\bbr | v^S_\xi(\xi -\mb X(t)) | |v-\wt v_{-\mb X}|^2 d\xi,\\
& \mathcal{G}^R(U):= \int_\bbr  |\wt u^R_\xi | | v-\wt v_{-\mb X} |^2 d\xi, \\
& D(U):= \int_\bbr  |\partial_\xi \big(p(v)-p(\wt v_{-\mb X})\big)|^2 d\xi,\\
&D_1(U):=  \int_\bbr \big|(u-\wt u_{-\mb X})_{\xi} \big|^2  d\xi,\\
&D_2(U):=  \int_\bbr \big|(u-\wt u_{-\mb X})_{\xi\xi} \big|^2  d\xi.
\end{aligned}
\end{align}
In addition, by \eqref{X(t)},
\beq\label{xprop}
|\dot{\mb{X}}(t)|\leq C_0\|(v-\wt v_{-\mb X})(t,\cdot)\|_{L^\infty(\bbr)},\qquad t\le T.
\eeq
\end{proposition}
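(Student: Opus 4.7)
The plan is to run the $a$-contraction with shifts argument from the introduction in the Bresch--Desjardins effective variables $U = (v, h)$, with $h := u - (\ln v)_\xi$ (and $\tilde h_{-\mb X} := \tilde u_{-\mb X} - (\ln \tilde v_{-\mb X})_\xi$); in these variables the dissipation of the relative entropy falls on $v$, which is the nonlinear argument of the pressure. Define the weighted relative entropy
\begin{equation*}
\mathcal E(t) := \int_\bbr a(\xi - \mb X(t))\, \eta\big(U(t,\xi)\,\big|\,\tilde U_{-\mb X}(t,\xi)\big)\, d\xi, \qquad \eta(U) = \tfrac{h^2}{2} + Q(v),
\end{equation*}
with the monotone weight $a$ of \eqref{weight} tuned so that $\partial_\xi a \propto \partial_\xi p'(\tilde v^S(\xi - \mb X))$ with amplitude $\lambda$ satisfying $\deltas \ll \lambda \ll 1$. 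Differentiating $\mathcal E$ along \eqref{NS-1} and \eqref{sws-u} produces the decomposition
\begin{equation*}
\ddt \mathcal E \;=\; \dot{\mb X}\,\mb Y(U) \;-\; \mathcal G^S(U) \;-\; \mathcal G^R(U) \;+\; B(U) \;-\; D(U) \;+\; \mathcal I_R,
\end{equation*}
where $B(U) = \int \partial_\xi p'(\tilde v^S)\,|v - \tilde v|^2\, d\xi$ is the single bad quadratic term, $\mathcal G^S$ is the good flux term produced by $\partial_\xi a$ in the shock layer, $\mathcal G^R$ comes from $\tilde u^R_\xi > 0$ in the Taylor expansion of $p(v|\tilde v)$ against the rarefaction, and $\mathcal I_R$ collects the interaction contributions from $F_1, F_2$ in \eqref{ff1f2}. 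The ODE \eqref{X(t)} is precisely designed so that $\dot{\mb X}\,\mb Y$ absorbs into a harmless $\deltas |\dot{\mb X}|^2$.

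The analytic core is then the sharp pointwise inequality
\begin{equation*}
-\deltas |\dot{\mb X}|^2 + B(U) \;\leq\; \tfrac12\,\mathcal G^S(U) \,+\, \tfrac34\,D(U),
\end{equation*}
obtained by the change of variable $y = \tilde v^S(\xi - \mb X)$ in the shock layer, which reduces the problem to the Poincaré-type Lemma \ref{lem-poin}, exactly as in the scalar Burgers calculation recalled in the introduction. The specific value $M = 5(\gamma+1)\sigma_m^3/(8\gamma p(v_m))$ appearing in \eqref{X(t)} is precisely what makes this inequality tight. The interaction errors $\mathcal I_R$ are controlled using the separation of the two waves: by Lemmas \ref{lemma1.3} and \ref{lemma1.2}, $\tilde v^S$ is exponentially localized near $\xi = \mb X(t)$ (with $|\mb X(t)| \leq C t$ by \eqref{roughx}), while $\tilde v^R_\xi$ is concentrated on the interval $[\lambda_1(v_-), \lambda_1(v_m)]\,t - \sigma t$, which drifts to $-\infty$ at speed $|\sigma - \lambda_1(v_m)| > 0$; the overlap is therefore exponentially small in $t$, and the residual rarefaction self-interactions, estimated via $\|\tilde u^R_{\xi\xi}\|_{L^p} \leq C\min\{\delta_R, (1+t)^{-1}\}$ and a standard interpolation (cf.\ \cite{MN86}), give an $L^1_t$ bound of order $\deltar^{1/6}$. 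Time integration of the resulting differential inequality then yields the $(v, h)$-version of \eqref{finest}, together with $\deltas \int_0^t |\dot{\mb X}|^2\, ds$.

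It remains to pass from $(v, h)$ back to $(v, u)$. The $L^2$ estimate on $u - \tilde u_{-\mb X}$ follows from $u - \tilde u = (h - \tilde h) + \partial_\xi \ln(v/\tilde v)$ and the already controlled $H^1$ norm of $v - \tilde v$; a weighted energy estimate on the linearized momentum equation, and then on its $\xi$-derivative, delivers $\|(u - \tilde u_{-\mb X})_\xi\|_{L^\infty_t L^2}$ together with the higher dissipations $D_1, D_2$, all nonlinear remainders being absorbed under the bootstrap smallness \eqref{apri-ass} with $\eps_1 + \delta_0$ small. The pointwise bound \eqref{xprop} is immediate from \eqref{X(t)} and $\|\tilde v^S_\xi\|_{L^1} \leq C\deltas$ (Lemma \ref{lemma1.3}). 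The main obstacle will be the sharp Poincaré step: the balance between $\mathcal G^S$ (weighted by $\lambda$), $D$ (unweighted), and the $(h - \tilde h)$-component of $\mb Y$ killed by the shift must be exact, and in parallel the interaction between the shifted shock and the rarefaction must be shown to remain small uniformly in $T$, despite $|\mb X(t)|$ being only $O(t)$ a priori.
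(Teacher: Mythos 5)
Your overall route is indeed the paper's: $a$-contraction in the $(v,h)$ variables with the weight $a$ and the shift $\mb X$, the Poincar\'e lemma in the shock variable, wave-separation estimates for the interaction terms, and the final passage back to $(v,u)$. The genuine gap is in what you call the analytic core. The decomposition $\ddt\mathcal E=\dot{\mb X}\,\mb Y-\mathcal G^S-\mathcal G^R+B-D+\mathcal I_R$ and the inequality $-\deltas|\dot{\mb X}|^2+B\le\frac12\mathcal G^S+\frac34 D$ are not what the entropy computation produces, and as stated they cannot close. In the actual identity (Lemmas \ref{lem-zerovh} and \ref{lem-max}), the term $\s\int a\,(\wt v^S)_\xi\,p(v|\wt v)\,d\xi\sim\int|\wt v^S_\xi||v-\wt v|^2d\xi$ appears with the \emph{bad} sign (it is $\mb B_2$, the analogue of $\int\tilde u'|u-\tilde u|^2$ in the Burgers sketch of the introduction); there is no $O(1)$-weighted good term $-\mathcal G^S$ anywhere in the identity, so bounding $B\sim\mathcal G^S$ by $\frac12\mathcal G^S+\frac34D$ is circular. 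What $\partial_\xi a$ actually produces is a cross term $\int a_\xi(p(v)-p(\wt v))(h-\wt h)\,d\xi$ that must first be maximized in $h-\wt h$ (a completion of the square your sketch omits), yielding the good term $\frac{\s}{2}\int a_\xi|h-\wt h-\frac{p(v)-p(\wt v)}{\s}|^2d\xi$ together with a \emph{bad} term $\mb B_1=\frac{1}{2\s}\int a_\xi|p(v)-p(\wt v)|^2d\xi$ of size $\frac{\lam}{\deltas}\,\mathcal G^S\gg\mathcal G^S$. The proof then hinges on the near-cancellation of $\mb B_1$ against the good term $\s\int a_\xi Q(v|\wt v)\,d\xi$ of the same order (using $\s^2\approx-p'(v_m)$), which leaves only an $O(\lam)\,\mathcal G^S$ remainder; only after that does the Poincar\'e inequality, applied to the residual $\mb B_2$ against $\frac34 D$ with the mean value of $p(v)-p(\wt v)$ controlled by $-\frac{\deltas}{2M}|\dot{\mb X}|^2$ and the precise choice of $M$, \emph{generate} the negative multiple of $\mathcal G^S$ that appears on the left of \eqref{finest}. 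Without the maximization step and the $\mb B_1$ versus $\s\int a_\xi Q(v|\wt v)$ cancellation, the dominant $O(\lam/\deltas)$ bad term is simply unaccounted for.

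Two smaller points. The effective profile must be taken as $\wt h=\wt u^R+(\wt h^S)^{-\mb X}-u_m$ with $\wt h^S=\wt u^S-(\ln\wt v^S)_\xi$, i.e.\ one subtracts the log-derivative of the shock part only (the rarefaction is inviscid, and this is what makes \eqref{SWS} close with manageable errors $F_2,F_3$); your $\wt h_{-\mb X}=\wt u_{-\mb X}-(\ln\wt v_{-\mb X})_\xi$ is a different object. And the shift does not absorb the $(h-\wt h)$-component of $\mb Y$: it is chosen to cancel only the two $v$-type functionals $\mb Y_1+\mb Y_2$, while the $(h-\wt h)$-component $\mb Y_3$ is controlled through the $a_\xi$-weighted good term $\mb G_1$ via Young's inequality --- which is exactly why both the weight and the shift are needed.
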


We postpone the proof of this key proposition to Sections \ref{sec-acontraction} and \ref{sec-vu}. We are proving in the rest of this section  how Proposition \ref{prop2} implies Theorem \ref{thm:main}.

\subsection{Global in time estimates on the perturbations} 
We first prove   \eqref{ext-main} from  Theorem \ref{thm:main} by using Proposition \ref{prop:soln} and Proposition \ref{prop2} and a continuation argument.\\
Let us consider the positive constants $\delta_0, \eps_1, C_0$ of Proposition \ref{prop2}.  The constant $\delta_0$ control the maximum size of the shock and the rarefaction, and  can be chosen even smaller if needed. First, by \eqref{sm} in Proposition \ref{prop:soln}, the smooth and monotone functions $\underline v(x), \underline u(x)$ especially satisfy for some $C_*>0$,
\begin{align}
\begin{aligned}\label{underini}
&\sum_{\pm}\Big( \|\underline v-v_\pm\|_{L^2(\bbr_\pm)} +  \|\underline u-u_\pm\|_{L^2(\bbr_\pm)} \Big) + \|\partial_x\underline v\|_{L^2(\bbr)} + \|\partial_x\underline u\|_{L^2(\bbr)} \\
&\qquad\qquad \le C(|v_+-v_-|+|u_+-u_-|) \le C_*(\deltar+\deltas) (\le 2C_*\delta_0).
\end{aligned}
\end{align}
This together with Lemmas \ref{lemma1.3} and \ref{lemma1.2} then implies that for some $C_1>0$,
\begin{align}
\begin{aligned}\label{fffest}
&\|\underline v(\cdot)-\wt v(0,\cdot)\|_{H^1(\bbr)} +\|\underline u(\cdot)-\wt u(0,\cdot)\|_{H^1(\bbr)} \\
&\le \sum_{\pm}\Big( \|\underline v-v_\pm\|_{L^2(\bbr_\pm)} +  \|\underline u-u_\pm\|_{L^2(\bbr_\pm)} \Big)+\|\wt v^R(0)-v_m\|_{L^2(\bbr_+)}\\
&\quad +\|\wt v^S-v_+\|_{L^2(\bbr_+)}   +\|\wt v^R(0)-v_-\|_{L^2(\bbr_-)} +\|\wt v^S-v_m\|_{L^2(\bbr_-)} \\
&\quad + \|\partial_x\underline v\|_{L^2(\bbr)} +\|\partial_x\wt v^R(0)\|_{L^2(\bbr)} + \|\wt v^S_\xi\|_{L^2(\bbr)}\\
&\quad   +\|\wt u^R(0)-u_m\|_{L^2(\bbr_+)}+\|\wt u^S-u_+\|_{L^2(\bbr_+)} +\|\wt u^R(0)-u_-\|_{L^2(\bbr_-)}\\
&\quad   +\|\wt u^S-u_m\|_{L^2(\bbr_-)}  + \|\partial_x\underline u\|_{L^2(\bbr)} +\|\partial_x\wt u^R(0)\|_{L^2(\bbr)} + \|\wt u^S_\xi\|_{L^2(\bbr)}\\
&\le C_1(\deltar+\sqrt{\deltas}).
\end{aligned}
\end{align}
By smallness of $\delta_0$, we observe that for any $\deltas, \deltar \in (0,\delta_0)$, 
\beq\label{10}
\frac{\frac{\eps_1}{2}-C_0 \deltar^{1/6} }{C_0+1} -C_1(\deltar+\sqrt{\deltas})-C_*(\deltar+\deltas) >0.
\eeq
Let $\eps_0$ be the above positive constant:
\[
\eps_0:=\eps_*-C_*(\deltar+\deltas),\quad{\rm and} \ \ \eps_*:=\frac{\frac{\eps_1}{2}-C_0 \deltar^{1/6} }{C_0+1} -C_1(\deltar+\sqrt{\deltas}),
\]
where note that $\eps_0$ can be chosen independently on $\delta_S, \delta_R$, for example, as $\eps_0=\frac{\eps_1}{4(C_0+1)}$.\\
The specific constants $\eps_0, \eps_*$ will be used to apply Propositions \ref{prop:soln} and \ref{prop2} as below.\\
Consider any initial data $(v_0,u_0)$  verifying the hypothesis \eqref{i-p} of Theorem \ref{thm:main}, that is, 
\beq\label{genini}
\sum_{\pm}\Big( \|v_0-v_\pm\|_{L^2(\bbr_\pm)} +  \|u_0-u_\pm\|_{L^2(\bbr_\pm)} \Big) + \|v_{0x}\|_{L^2(\bbr)}+ \|u_{0x}\|_{L^2(\bbr)} < \eps_0,
\eeq
which together with \eqref{underini} yields
\begin{align}
\begin{aligned}\label{inie}
&\|v_0-\underline v\|_{H^1(\bbr)} +\|u_0 -\underline u\|_{H^1(\bbr)}  \\
&\quad \le \sum_{\pm}\Big( \|v_0-v_\pm\|_{L^2(\bbr_\pm)} +  \|u_0-u_\pm\|_{L^2(\bbr_\pm)}+\|\underline v-v_\pm\|_{L^2(\bbr_\pm)} +  \|\underline u-u_\pm\|_{L^2(\bbr_\pm)} \Big) \\
&\qquad + \|v_{0x}\|_{L^2(\bbr)}+ \|u_{0x}\|_{L^2(\bbr)}+ \|\underline v_x\|_{L^2(\bbr)} + \|\underline u_x\|_{L^2(\bbr)}\\
&\quad \le \eps_0+C_*(\deltar+\deltas) = \eps_*.
\end{aligned}
\end{align}
Especially, this together with Sobolev embedding implies that 
\beq\label{betv}
\|v_0-\underline v\|_{L^\infty(\bbr)} \le C\eps_*,
\eeq
which together with smallness of $\eps_*$ implies that
\[
\frac{v_-}{2}< v_0(\xi) < 2v_+,\quad  \forall \xi\in \bbr.
\]
 Since $\eps_*$ satisfies $0<\eps_*<\frac{\eps_1}{2}$ by \eqref{10}, Proposition \ref{prop:soln} with \eqref{inie} and \eqref{betv} implies that there exists $T_0>0$ such that 
\eqref{NS-1} has a unique solution $(v,u)$ on $[0,T_0]$ satisfying
\beq\label{341}
\|v-\underline v\|_{L^\infty(0,T_0;H^1(\bbr))} + \|u-\underline u\|_{L^\infty(0,T_0;H^1(\bbr))} \le \frac{\eps_1}{2},
\eeq
and
\[
\frac{v_-}{3}< v(t,\xi) < 3v_+,\quad  \forall (t,\xi)\in [0,T_0]\times \bbr.
\]
Then, using the same argument as in \eqref{fffest}, and then using Lemmas \ref{lemma1.2} and \ref{lem:xex}, we find that for all $t\in [0,T_0]$,
\begin{align*}
\begin{aligned}
&\|\underline v-\wt v_{-\mb X}(t,\cdot)\|_{L^2(\bbr)} + \|\underline u-\wt u_{-\mb X}(t,\cdot)\|_{L^2(\bbr)}\\
&\le  \sum_{\pm}\Big( \|\underline v-v_\pm\|_{L^2(\bbr_\pm)} +  \|\underline u-u_\pm\|_{L^2(\bbr_\pm)} \Big)+\|\wt v^R(t,\cdot+\s t)-v_m\|_{L^2(\bbr_+)} +\|(\wt v^S)^{-\mb X}-v_+\|_{L^2(\bbr_+)}  \\
&\quad +\|\wt v^R(t,\cdot +\s t)-v_-\|_{L^2(\bbr_-)} +\|(\wt v^S)^{-\mb X}-v_m\|_{L^2(\bbr_-)} +\|\wt u^R(t,\cdot+\s t)-u_m\|_{L^2(\bbr_+)}  \\
&\quad + \|\partial_x\underline v\|_{L^2(\bbr)} +\|\partial_x\wt v^R(t)\|_{L^2(\bbr)} + \|(\wt v^S)^{-\mb X}_\xi\|_{L^2(\bbr)} \\
&\quad +\|(\wt u^S)^{-\mb X}-u_+\|_{L^2(\bbr_+)} +\|\wt u^R(t,\cdot+\s t)-u_-\|_{L^2(\bbr_-)} +\|(\wt u^S)^{-\mb X}-u_m\|_{L^2(\bbr_-)}  \\
&\quad + \|\partial_x\underline u\|_{L^2(\bbr)} +\|\partial_x\wt u^R(t)\|_{L^2(\bbr)} + \|(\wt u^S)^{-\mb X}_\xi\|_{L^2(\bbr)}\\
&\le C\deltar\sqrt{1+(\s-\lambda_1(v_-))t}+C\sqrt{\deltas}(1 +\sqrt{|\mb X(t)|}) \\
&\le C\sqrt{\delta_0}(1+\sqrt{t}).
\end{aligned}
\end{align*}
Indeed, some estimates above are computed as follows:
\begin{align*}
\begin{aligned}
&\int_0^\infty |\wt v^S(\xi-\mb X(t)) -v_+|^2 d\xi =\int_{-\mb X(t)}^\infty |\wt v^S(\xi) -v_+|^2 d\xi \\
&\quad\le  \int_0^\infty  C\delta_S^2 e^{-C\deltas|\xi|} d\xi + \int_0^{|\mb X(t)|}|\wt v^S(\xi) -v_+|^2 d\xi  \le C\deltas (1+|\mb X(t)|) ,\\
&\int_{-\infty}^0 |\wt v^R(t,\xi+\s t) -v_-|^2 d\xi = \int_{-\infty}^{\s t} |\wt v^R(t,x) -v_-|^2 dx \\
&\quad = \int_{\lambda_1(v_-) t}^{\s t} |\wt v^R(t,x) -v_-|^2 dx +  \int_{-\infty}^{\lambda_1(v_-) t} |\wt v^R(t,x) -v_-|^2 dx\\
&\quad\le \delta_R^2 (\s-\lambda_1(v_-))t + C \delta_R^2  \int_{-\infty}^{\lambda_1(v_-) t} e^{-4|x-\lambda_1(v_-)t|} dx \le  C \delta_R^2\big(1+(\s-\lambda_1(v_-))t\big),\\
&\int_0^\infty |\wt v^R(t,\xi+\s t) -v_m|^2 d\xi \le   C \delta_R^2\int_{\s t}^\infty e^{-4|x|} dx \le   C \delta_R^2.\\
\end{aligned}
\end{align*}
Using smallness of $\delta_0$, and choosing $T_1\in (0, T_0)$ small enough such that $C\sqrt{\delta_0}(1+\sqrt{T_1})\le \frac{\eps_1}{2}$, we have
\beq\label{342}
\|\underline v-\wt v_{-\mb X}\|_{L^\infty(0,T_1;H^1(\bbr))} + \|\underline u-\wt u_{-\mb X}\|_{L^\infty(0,T_1;H^1(\bbr))} \le \frac{\eps_1}{2}.
\eeq
Therefore, \eqref{341} and \eqref{342} imply that
\[
\|v-\wt v_{-\mb X}\|_{L^\infty(0,T_1;H^1(\bbr))} + \|u-\wt u_{-\mb X}\|_{L^\infty(0,T_1;H^1(\bbr))} \le \eps_1.
\]
Especially, since $\mb X$ is absolutely continuous, and
\[
v -\underline v, u -\underline u \in C([0,T_1];H^1(\bbr)),
\]
we have
\[
v -\wt v_{-\mb X}, u -\wt u_{-\mb X} \in C([0,T_1];H^1(\bbr)).
\]
We now consider the maximal existence time:
\[
T_M := \sup \left \{ t>0~\Big|~\sup_{[0,t]} \left(\|v-\wt v_{-\mb X}\|_{H^1(\bbr)} + \|u-\wt u_{-\mb X}\|_{H^1(\bbr)} \right) \le \eps_1 \right \}.
\]
If $T_M<\infty$, then the continuation argument implies that
\beq\label{343}
 \sup_{[0,T_M]} \left(\|v-\wt v_{-\mb X}\|_{H^1(\bbr)} + \|u-\wt u_{-\mb X}\|_{H^1(\bbr)} \right) =\eps_1.
\eeq
But, since it follows from \eqref{fffest} and \eqref{inie} that
\[
 \|v_0-\wt v(0,\cdot)\|_{H^1(\bbr)} +\|u_0 -\wt u(0,\cdot)\|_{H^1(\bbr)} < \frac{\frac{\eps_1}{2}-C_0 \deltar^{1/6} }{C_0+1},
\]
it holds from Proposition \ref{prop2} that
\[
 \sup_{[0,T_M]} \left(\|v-\wt v_{-\mb X}\|_{H^1(\bbr)} + \|u-\wt u_{-\mb X}\|_{H^1(\bbr)} \right) \le C_0  \frac{\frac{\eps_1}{2}-C_0 \deltar^{1/6} }{C_0+1} + C_0\deltar^{1/6} \le \frac{\eps_1}{2},
\]
which contradicts the above equality \eqref{343}. \\
Therefore, $T_M=\infty$, which together with Proposition \ref{prop2} implies
\begin{align}
\begin{aligned}\label{fcru}
&\sup_{t>0}\big(\|v-\wt v_{-\mb X}\|_{H^1(\bbr)} +\|u-\wt u_{-\mb X}\|_{H^1(\bbr)} \big) +\sqrt{\deltas\int_0^\infty |\dot{\mb{X}}|^2 ds} \\
&\qquad\quad +\sqrt{\int_0^\infty \big(\mathcal{G}^S(U)+\mathcal{G}^R(U)+D(U)+D_1(U)+D_2(U) \big) ds}\\
&\quad \le C_0\left( \|v_0-\wt v(0,\cdot)\|_{H^1(\bbr)} +\|u_0 -\wt u(0,\cdot)\|_{H^1(\bbr)} \right) + C_0\deltar^{1/6} < \infty,
\end{aligned}
\end{align}
and
\beq\label{xcru}
|\dot{\mb{X}}(t)|\leq C_0\|(v-\wt v_{-\mb X})(t,\cdot)\|_{L^\infty(\bbr)}, \quad t>0.
\eeq
In addition, since the rarefaction wave $(v^r, u^r)$ is Lipschitz continuous in $x$ for all $t>0$ and from Lemma \ref{lemma1.2}, 
we have
 \begin{align*}
&v(t,x)- \Big( v^r(\frac xt)+\wt v^S(x-\s t-\mb X(t))-v_m \Big) \in C([0,+\infty);H^1(\bbr)),\\
& u(t,x)- \Big( u^r(\frac xt)+\wt u^S(x-\s t-\mb X(t))-u_m \Big) \in C([0,+\infty);H^1(\bbr)).
\end{align*}
Since $(u-\wt u_{-\mb X})_{\xi\xi}\in L^2(0,+\infty; L^2(\bbr))$ by \eqref{fcru}, and $(\wt u^R)_{\xi\xi}\in L^2(0,+\infty; L^2(\bbr))$ by Lemma \ref{lemma1.2},
we have
\[
u_{xx}(t,x)-\wt u^S_{xx}(x-\s t-\mb X(t))\in L^2(0,+\infty; L^2(\bbr)),
\]
which implies the desired result \eqref{ext-main}.\\
Especially, since the right-hand side of \eqref{fcru} is small enough, we find that (by Sobolev embedding as before)
\beq\label{vabdd}
\frac{v_-}{3}< v(t,\xi) < 3v_+,\quad  \forall (t,\xi)\in [0,\infty)\times \bbr.
\eeq
These and the above estimates \eqref{fcru}-\eqref{xcru} are useful to prove the long-time behaviors \eqref{con}-\eqref{as} as follows.

\subsection{Time-asymptotic behavior, and end of the proof of Theorem \ref{thm:main}} We now want to prove \eqref{con} and \eqref{as}.
Consider a function $g$ defined on $(0,\infty)$ by
\[
g(t):=\|(v-\wt v_{-\mb X})_\xi\|_{L^2(\bbr)}^2 +\|(u-\wt u_{-\mb X})_\xi\|_{L^2(\bbr)}^2 .
\]
The aim is to show the classical estimate:
\begin{equation}\label{asym}
\int_0^\infty\big[|g(t)|+|g^\prime(t)| \big]dt<\infty.
\end{equation}
Since
\begin{align*}
\begin{aligned}
(p(v)-p(\wt v_{-\mb X}))_\xi &= p'(v)(v-\wt v_{-\mb X})_\xi + (\wt v_{-\mb X})_\xi (p'(v)-p'(\wt v_{-\mb X}))\\
&=p'(v)(v-\wt v_{-\mb X})_\xi + \big(\wt v^R_\xi + \wt v^S_\xi (\xi-\mb X(t)) \big) (p'(v)-p'(\wt v_{-\mb X})),
\end{aligned}
\end{align*}
the uniform bound \eqref{vabdd} yields
\beq\label{8*}
|(v-\wt v_{-\mb X})_\xi | \le C |(p(v)-p(\wt v_{-\mb X}))_\xi | +C\big(|\wt v^R_\xi| + |\wt v^S_\xi (\xi-\mb X(t))| \big) |v-\wt v_{-\mb X}|.
\eeq
Thus, it follows from \eqref{fcru}, \eqref{8*} and $ |\wt u^R_\xi | \sim  |\wt v^R_\xi |$ that
$$
\int_0^\infty |g(t)| dt\leq C\int_0^\infty \big(\mathcal{G}^S(U)+\mathcal{G}^R(U)+D(U)+D_1(U) \big) dt<\infty,
$$
which proves the first part of \eqref{asym}. \\
To show the second part of \eqref{asym}, we combine the systems \eqref{NS-1} and \eqref{sws-u} as follows:
\begin{align}
\begin{aligned}\label{combt}
&(v-\wt v_{-\mb X})_t-\sigma(v-\wt v_{-\mb X})_\xi -\dot{\mb{X}}(t)(\wt v^S)^{-\mb X}_\xi -(u-\wt u_{-\mb X})_\xi =0,\\
& \di (u-\wt u_{-\mb X})_t-\sigma(u-\wt u_{-\mb X})_\xi-\dot{\mb{X}}(t) (\wt u^S)^{-\mb X}_\xi+(p(v)-p(\wt v_{-\mb X}))_\xi\\
&\qquad\qquad =\left(\frac{u_\xi}{v} - \frac{(\wt u_{-\mb X})_\xi}{\wt v_{-\mb X}}\right)_\xi -F_1 -F_2.
\end{aligned}
\end{align}
Using \eqref{combt} and the integration by parts, we have
\beq\label{bound}
\begin{array}{ll}
\di \int_0^\infty |g^\prime(t)|dt=\int_0^\infty2\left|\int (v-\wt v_{-\mb X})_\xi(v-\wt v_{-\mb X})_{\xi t} d\xi+ \int (u-\wt u_{-\mb X})_\xi(u-\wt u_{-\mb X})_{\xi t} d\xi\right| dt\\[4mm]
\di \leq \int_0^\infty\left| \s \int \partial_\xi ((v-\wt v_{-\mb X})^2_\xi) d\xi + 2\int (v-\wt v_{-\mb X})_\xi \left[\dot{\mb{X}}(t)(\wt v^S)^{-\mb X}_{\xi\xi} +(u-\wt u_{-\mb X})_{\xi\xi}  \right] d\xi\right| dt  \\[4mm]
\di \quad + \int_0^\infty\bigg|\s \int \partial_\xi ((u-\wt u_{-\mb X})^2_\xi) d\xi +2\int (u-\wt u_{-\mb X})_\xi \dot{\mb{X}}(t) (\wt u^S)^{-\mb X}_{\xi\xi} d\xi  \\[4mm]
\di\qquad\qquad+2\int (u-\wt u_{-\mb X})_{\xi\xi} \bigg[-(p(v)-p(\wt v_{-\mb X}))_{\xi} +\left(\frac{u_\xi}{v} - \frac{(\wt u_{-\mb X})_\xi}{\wt v_{-\mb X}}\right)_{\xi} - F_1 - F_2 \bigg] d\xi\bigg| dt \\[4mm]
\di \le 2\int_0^\infty\int \bigg( |(v-\wt v_{-\mb X})_\xi | \left[ |\dot{\mb{X}}(t)| |(\wt v^S)^{-\mb X}_{\xi\xi}| +|(u-\wt u_{-\mb X})_{\xi\xi}|\right]  + |(u-\wt u_{-\mb X})_\xi | |\dot{\mb{X}}(t)| |(\wt u^S)^{-\mb X}_{\xi\xi}|   \\[4mm]
\di\qquad\qquad+ |(u-\wt u_{-\mb X})_{\xi\xi}| \bigg[|(p(v)-p(\wt v_{-\mb X}))_{\xi}| +\Big|\left(\frac{u_\xi}{v} - \frac{(\wt u_{-\mb X})_\xi}{\wt v_{-\mb X}}\right)_{\xi}\Big| + |F_1| + |F_2| \bigg] \bigg) d\xi dt\\[5mm]
\di \le C\int_0^\infty \big(|\dot{\mb{X}}(t)|^2+\mathcal{G}^S(U)+\mathcal{G}^R(U)+D(U)+D_1(U)+D_2(U) \big) dt\\[4mm]
\di \quad +C\int_0^\infty \int\Big[\Big|\Big(\frac{u_\xi}{v} - \frac{(\wt u_{-\mb X})_\xi}{\wt v_{-\mb X}}\Big)_{\xi}\Big|^2 + |F_1|^2 + |F_2|^2\Big]d\xi dt.
\end{array}
\eeq
For the last three terms above, we get further estimates as follows. \\
Using \eqref{vabdd} with Lemmas \ref{lemma1.3} and \ref{lemma1.2}, one has
\begin{align*}
\nonumber&\int_0^\infty \int\Big|\Big(\frac{u_\xi}{v} - \frac{(\wt u_{-\mb X})_\xi}{\wt v_{-\mb X}}\Big)_{\xi}\Big|^2d\xi dt \\\nonumber
&=\int_0^\infty \int\bigg| \frac{1}{v}\left( u -\wt u_{-\mb X} \right)_{\xi\xi}+ (\wt u_{-\mb X} )_{\xi\xi}\left(\frac{1}{v}-\frac{1}{\wt v_{-\mb X} }\right) -\frac{u_\xi}{v^2} \left(v -\wt v_{-\mb X} \right)_\xi \\\nonumber
&\qquad-\frac{(\wt v_{-\mb X})_\xi}{v^2} \left(u -\wt u_{-\mb X} \right)_\xi  + (\wt v_{-\mb X})_{\xi} (\wt u_{-\mb X})_{\xi} \left(\frac{1}{v^2}-\frac{1}{(\wt v_{-\mb X})^2 }\right)   \bigg| ^2d\xi dt \\
&\le C \int_0^\infty \int \Big[|\left( u -\wt u_{-\mb X} \right)_{\xi\xi}|^2 + (|(\wt u^R)_{\xi}|^2+|(\wt u^S)^{-\mb X}_{\xi}|^2)|v-\wt v_{-\mb X}|^2 \\\nonumber
&\qquad +  |(u-\wt u_{-\mb X})_\xi|^2 |\left(v -\wt v_{-\mb X} \right)_\xi|^2 +  (|(\wt v^R)_{\xi}|^2+|(\wt v^S)^{-\mb X}_{\xi}|^2) |\left(u -\wt u_{-\mb X} \right)_\xi|^2 \\\nonumber
&\qquad +  (|(\wt u^R)_{\xi}|^2+|(\wt u^S)^{-\mb X}_{\xi}|^2) |\left(v -\wt v_{-\mb X} \right)_\xi|^2\Big]d\xi dt.
\end{align*}
Then, using \eqref{maingood}, we have
\begin{align*}
\nonumber&\int_0^\infty \int\Big|\Big(\frac{u_\xi}{v} - \frac{(\wt u_{-\mb X})_\xi}{\wt v_{-\mb X}}\Big)_{\xi}\Big|^2d\xi dt \\
&\le C\int_0^\infty \Big(\mathcal{G}^S(U)+\mathcal{G}^R(U)+D(U)+D_1(U)+D_2(U)\Big)dt\\\nonumber
&\qquad + C\int_0^\infty\|\left(u -\wt u_{-\mb X} \right)_\xi\|_{L^\infty(\mathbb{R})}^2\int|(v-\wt v_{-\mb X})_\xi|^2 d\xi dt.
\end{align*}
Using the interpolation inequality and \eqref{fcru}, the last term above is estimated as
\[
\begin{array}{ll}
\di C\int_0^\infty\|\left(u -\wt u_{-\mb X} \right)_\xi\|_{L^\infty(\mathbb{R})}^2\int|(v-\wt v_{-\mb X})_\xi|^2 d\xi dt\\[4mm]
\di \le C\int_0^\infty\|\left(u -\wt u_{-\mb X} \right)_\xi\|_{L^2(\bbr)}\|\left(u -\wt u_{-\mb X} \right)_{\xi\xi}\|_{L^2(\bbr)} \|(v-\wt v_{-\mb X})_\xi\|_{L^2(\bbr)}^2 dt\\[4mm]
\di \le C\int_0^\infty\Big[\|\left(u -\wt u_{-\mb X} \right)_{\xi\xi}\|_{L^2(\bbr)}^2+\|\left(u -\wt u_{-\mb X} \right)_\xi\|_{L^2(\bbr)}^2\|\left(v -\wt v_{-\mb X} \right)_\xi\|_{L^2(\bbr)}^4\Big] dt\\[4mm]
\di \le C\int_0^\infty\Big[\|\left(u -\wt u_{-\mb X} \right)_{\xi\xi}\|_{L^2(\bbr)}^2+\|\left(u -\wt u_{-\mb X} \right)_\xi\|_{L^2(\bbr)}^2 \Big] dt\\[4mm]
\di \le C\int_0^\infty \Big(\mathcal{G}^S(U)+\mathcal{G}^R(U)+D(U)+D_2(U)\Big)dt<\infty.
\end{array}
\]
Similarly, using Lemmas \ref{lemma1.3} and \ref{lemma1.2} with recalling $\wt v_{-\mb X}=\wt v^R+(\wt v^S)^{-\mb X} -v_m$, we have
\begin{align*}
\begin{aligned}
& \int_0^\infty \int|F_1|^2d\xi dt=\int_0^\infty \int\Big|\Big(\frac{ ({\wt u}^S_\xi)^{-\mb{X}} }{ ({\wt v}^S)^{-\mb{X}}}\Big)_\xi  - \left(\frac{(\wt u_{-\mb X})_\xi}{\wt v_{-\mb X}} \right)_\xi \Big|^2d\xi dt \\
&\quad  \le C\int_0^\infty \int\bigg( |(\wt u^R)_{\xi\xi}|+|(\wt u^R)_{\xi}| |(\wt v^R)_{\xi}|  +( |(\wt u^S)^{-\mb{X}}_{\xi\xi}|+|(\wt u^S)^{-\mb{X}}_{\xi}||(\wt v^S)^{-\mb{X}}_{\xi}|) |\wt v^R-v_m| \\
&\quad\qquad\qquad\qquad +|(\wt u^R)_\xi | |(\wt v^S)^{-\mb{X}}_\xi | +|(\wt v^R)_\xi | |(\wt u^S)^{-\mb{X}}_\xi | \bigg)^2d\xi dt\\
&\quad  \le C\int_0^\infty \left(\|(\wt u^R)_{\xi\xi}\|_{L^2(\bbr)}^2 + \|(\wt u^R)_{\xi}\|_{L^4(\bbr)}^4 + \||(\wt v^S)^{-\mb{X}}_{\xi}| |\wt v^R-v_m| +|(\wt v^R)_\xi | |(\wt u^S)^{-\mb{X}}_\xi | \|_{L^2(\bbr)}^2 \right) dt,\\
\end{aligned}
\end{align*}
and 
\begin{align*}
\int_0^\infty \int|F_2|^2d\xi dt &=\int_0^\infty \int\left| \big[p(\wt v_{-\mb X})-p(\wt v^R)-p\big((\wt v^S)^{-\mb X}\big)\big]_\xi \right|^2d\xi dt \\
& \le C\int_0^\infty \| |\wt v^R_\xi| |(\wt v^S)^{-\mb X}-v_m| +|(\wt v^S)^{-\mb X}_\xi|  |\wt v^R -v_m| \|_{L^2(\bbr)}^2 dt .
\end{align*}
Notice that the right-hand sides above are finite by Lemma \ref{lemma1.2} and Lemma \ref{lemma2.2}.
Thus, the above estimates with \eqref{fcru} imply the proof of the second part of \eqref{asym}. \\
Therefore, we have \eqref{asym}, which implies
\[
\lim_{t\rightarrow+\infty} \big(\|(v-\wt v_{-\mb X})_\xi\|_{L^2(\bbr)}^2 +\|(u-\wt u_{-\mb X})_\xi\|_{L^2(\bbr)}^2 \big) =0.
\]
This together with the interpolation inequality and \eqref{fcru} implies
\begin{equation}\label{asym-b}
\lim_{t\rightarrow+\infty} \big(\|v-\wt v_{-\mb X}\|_{L^\infty(\bbr)} +\|u-\wt u_{-\mb X}\|_{L^\infty (\bbr)} \big) =0,
\end{equation}
which together with Lemma \ref{lemma1.2} (5) implies \eqref{con}. In addition, by \eqref{xcru} and \eqref{asym-b}, it holds that
\begin{equation}
|\dot{\mb{X}}(t)|\leq C_0\|(v-\wt v_{-\mb X})(t,\cdot)\|_{L^\infty(\bbr)} \rightarrow 0\quad {\rm as}\quad t\rightarrow +\infty,
\end{equation}
which proves \eqref{as}. Thus we complete the proof of Theorem \ref{thm:main}.\\

Hence, the remaining part of this paper is dedicated to the proof of Proposition \ref{prop2}.\\
 
\noindent$\bullet$ {\bf Notations:} In what follows, we use the following notations  for notational simplicity. \\
1. $C$ denotes a positive $O(1)$-constant which may change from line to line, but which is independent of the small constants $\delta_0, \eps_1, \deltas,\deltar$, $\lambda$ (to appear in \eqref{weight}) and the time $T$.\\
2. For any function $f : \bbr^+\times \bbr\to \bbr$ and any time-dependent shift $\mb X(t)$, 
\[
f^{\pm \mb X}(t, \xi):=f(t,\xi\pm \mb X(t)).
\]
3. We omit the dependence on $\mb X$ for \eqref{shwave} as follows:
\[
(\wt v, \wt u) (t,\xi) := \Big(\wt v^R(t,\xi+\s t)+\wt v^S(\xi -\mb X(t))-v_m,\wt u^R(t,\xi+\s t)+\wt u^S(\xi-\mb X(t))-u_m \Big) .
\]
For simplicity, we also omit the arguments of the waves without confusion: for example, 
\begin{align*}
\begin{aligned}
&\wt v^R:=\wt v^R(t,\xi+\s t),\quad  (\wt v^R)^{\mb X}:=\wt v^R(t,\xi+\s t + \mb X(t)),\\
&\wt v^{\mb X}:=\wt v^R(t,\xi+\s t + \mb X(t))+\wt v^S(\xi)-v_m.
\end{aligned}
\end{align*}

\section{Energy estimates for the system of $(v,h)$-variables}\label{sec-acontraction}
\setcounter{equation}{0}
We introduce a new effective velocity 
\beq\label{h}
h:= u-(\ln v)_\xi.
\eeq
 Then, the system \eqref{NS-1} is transformed into
\begin{align}
\begin{aligned}\label{hNS-1}
\left\{ \begin{array}{ll}
 v_t-\sigma v_\xi -h_\xi= (\ln v)_{\xi\xi}, \\
        h_t-\sigma h_\xi+p(v)_\xi =0.\\
        \end{array} \right.
\end{aligned}
\end{align}
We set $\wt h^S:= \wt u^S -(\ln \wt v^S)_\xi$. Then, it follows from \eqref{VS} that
\begin{equation}\label{hVS}
\left\{
\begin{array}{ll}
\di -\sigma (\wt v^S)^\prime-(\wt h^S)^\prime=(\ln \wt v^S)'',\\[3mm]
\di -\sigma(\wt h^S)^\prime+(p(\wt v^S))^\prime=0,\\[3mm]
\di (\wt v^S,\wt h^S)(-\infty)=(v_m, u_m),\qquad  (\wt v^S,\wt h^S)(+\infty)=(v_+, u_+).
\end{array}
\right.
\end{equation}
Set
\beq\label{huvx}
\wt h(t,\xi):=\wt u^R(t,\xi) + (\wt h^S)^{-\mb X}(\xi) - u_m,\quad \mbox{for } t\in[0,T].
\eeq
Then, it holds from \eqref{rarexi} and \eqref{hVS} that
\begin{align}
\begin{aligned}\label{SWS}
\left\{ \begin{array}{ll}
       \di \wt v_t-\sigma\wt v_\xi+\dot{\mb{X}}(t)(\wt v^S)^{-\mb{X}}_\xi-\wt h_\xi =(\ln \wt v)_{\xi\xi}+F_3,\\[3mm]
      \di \wt h_t-\sigma \wt h_\xi+\dot{\mb{X}}(t)(\wt h^S)^{-\mb{X}}_\xi +(p(\wt v))_\xi= F_2, \\[3mm]
        \end{array} \right.
\end{aligned}
\end{align}
where $F_2$ is defined in \eqref{ff1f2} 
\beq\label{f1f2}
F_3=\big(\ln (\wt v^S)^{-\mb X} -\ln \wt v\big)_{\xi\xi}.
\eeq

This section is dedicated to the proof of the following lemma.\\
\begin{lemma}\label{lem-zvh}
Under the hypotheses of Proposition \ref{prop2}, there exists $C>0$ (independent of $\delta_0, \eps_1, T$) such that for all $t\in (0,T]$,
\begin{align}
\begin{aligned}\label{esthv}
&\int_{\bbr} \left(\frac{|h-\wt h|^2}{2} +Q(v|\wt v)\right) d\xi +\deltas\int_0^t|\dot{\mb{X}}|^2 ds +\int_0^t \left( G_1(U)+ G^S(U) + D(U) \right) ds\\
&\quad\le C\int_{\bbr} \left(\frac{|h(0,\xi)-\wt h(0,\xi)|^2}{2} +Q(v_0|\wt v(0,\xi))\right)  d\xi  + C \deltar^{1/3},
\end{aligned}
\end{align}
where $h(0,\xi):= u_0(\xi)-(\ln v_0)_\xi(\xi)$, and
\begin{align}
\begin{aligned}\label{good1}
&G_1(U):= \frac{\lam}{\deltas}\int_\bbr  |(\wt v^S)_\xi^{-\mb X} |  \left| h-\wt h-\frac{p(v)-p(\wt v)}{\sigma}\right|^2 d\xi,\\
&G^S(U):=\int_\bbr |(\wt v^S)_\xi^{-\mb X} | |p(v)-p(\wt v)|^2 d\xi,\\
& D(U):= \int_\bbr  |\partial_\xi \big(p(v)-p(\wt v)\big)|^2 d\xi.
\end{aligned}
\end{align}

\end{lemma}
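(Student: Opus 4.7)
The proof is a quantitative $a$-contraction computation in the effective variables $(v,h)$. The natural object is the weighted relative entropy
\[
\mathcal{E}(t):=\int_\bbr a(\xi-\mb X(t))\,\eta(U|\wt U)(t,\xi)\,d\xi,\qquad \eta(U|\wt U)=\frac{|h-\wt h|^2}{2}+Q(v|\wt v),
\]
where the weight $a$ is the (yet to be introduced) bounded monotone function with $\tfrac12\le a\le\tfrac32$, $\|a\|_{C^1}\le 2$, and $a_\xi$ essentially proportional to $-\partial_\xi p((\wt v^S)^{-\mb X})$ with amplitude $\lambda$ satisfying $\deltas\ll\lambda\ll\sqrt{\deltas}$. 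Since $a$ is uniformly bounded above and below, control on $\mathcal{E}(t)$ is equivalent to control on the unweighted relative entropy, and Lemma \ref{lem-useful} (\eqref{rel_Q}--\eqref{rel_p}) converts the latter into $L^2$ control of $(v-\wt v,h-\wt h)$.

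Using the systems \eqref{hNS-1} and \eqref{SWS}, a Dafermos--DiPerna relative entropy calculation yields an identity of the schematic form
\[
\frac{d}{dt}\mathcal{E}(t)=\dot{\mb X}(t)\,\mathbf{Y}(U)\;+\;\mathcal{B}(U)\;-\;G^S(U)\;-\;G_1(U)\;-\;D(U)\;+\;\mathcal{R}(U),
\]
in which $\mathbf{Y}(U)$ is linear in $(v-\wt v,h-\wt h)$ with kernel supported on the shock layer $(\wt v^S)^{-\mb X}_\xi$, $\mathcal{B}(U)$ gathers the bad quadratic terms coming from the convective/pressure flux (again carried by $(\wt v^S)^{-\mb X}_\xi$), and $\mathcal{R}(U)$ consists of errors produced by the inviscid rarefaction ($\wt u^R_\xi$-type terms and the forcings $F_2,F_3$). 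The $-G^S(U)$ and $-D(U)$ contributions come from the flux-amplifying weight ($\lambda$-enhancement of $a_\xi$) and the viscous dissipation on $v$ respectively; the Poincaré-type term $-G_1(U)$ arises after completing the square in the $\frac{p(v)-p(\wt v)}{\sigma}$ variable (which is the right combination because of \eqref{hVS}$_2$, telling us that to leading order the shock travels with $h-\wt h^S\approx (p(v)-p(\wt v^S))/\sigma$).

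The core step, and the place where everything is tuned, is the quadratic inequality
\[
\dot{\mb X}(t)\,\mathbf{Y}(U)+\mathcal{B}(U)-G^S(U)-G_1(U)-D(U)\le -c(\deltas|\dot{\mb X}|^2+G_1+G^S+D)+o(1)\cdot\mathcal{R}.
\]
The definition of $\dot{\mb X}$ in \eqref{X(t)}, with the precise constant $M=\frac{5(\gamma+1)\sigma_m^3}{8\gamma p(v_m)}$, is chosen exactly so that $\dot{\mb X}\mathbf{Y}$ cancels the projection of $\mathcal{B}$ along the direction $(\partial_\xi p(\wt v^S),\partial_\xi\wt h^S)$; the cancellation leaves a residual that is a one-dimensional quadratic form weighted by $(\wt v^S)^{-\mb X}_\xi$. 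Passing to the monotone variable $y=\frac{p((\wt v^S)^{-\mb X})-p(v_m)}{\deltas}\in(0,1)$ (monotonicity from Lemma \ref{lemma1.3}), this residual becomes precisely the minimization problem solved by the sharp Poincar\'e inequality of Lemma \ref{lem-poin}; together with the Taylor expansions of $Q(v|\wt v)$ and $p(v|\wt v)$ from Lemma \ref{lem-useful} (which introduce at worst $O(\deltas)$ perturbations of the coefficients), Lemma \ref{lem-poin}'s constant $\tfrac12$ beats the $1$ coming from the flux and gives a strictly negative quadratic form, leaving a spare portion proportional to $G_1+G^S+D$ plus the $\deltas|\dot{\mb X}|^2$ term.

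The remainder $\mathcal{R}(U)$ is controlled using the separation of the waves: $(\wt v^S)^{-\mb X}_\xi$ decays like $\deltas^2 e^{-c\deltas|\xi-\mb X|}$ by Lemma \ref{lemma1.3}, while the rarefaction $\wt u^R_\xi$ is concentrated near $\{\xi+\s t\sim\lambda_1(v_-)t\}$ with pointwise bound $O(\deltar e^{-2|\xi+\s t-\lambda_1(v_-)t|})$ in that tail by Lemma \ref{lemma1.2}, and the crude shift bound $|\mb X(t)|\le Ct$ from Lemma \ref{lem:xex} keeps the shock from catching up to the rarefaction. Splitting $\mathcal{R}$ into a product-wave interaction piece (handled by an $L^1_tL^\infty_\xi$ bound on $\wt u^R_\xi$ of order $\min(\deltar,t^{-1})$ from Lemma \ref{lemma1.2}(2)) and a piece absorbed by $D$ via Young's inequality, optimizing the Young split produces the $\deltar^{1/3}$ loss. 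Integrating in time and using $a\ge\tfrac12$ yields \eqref{esthv}. The main obstacle is the sharp constant bookkeeping in the Poincar\'e step: one has to verify that the specific value of $M$ in \eqref{X(t)} together with the $\lambda$-amplification of $a_\xi$ produces a strictly signed form \emph{after} the $O(\deltas)$ perturbations, so that not only is the bad term dominated but a definite fraction of $G_1+G^S+D+\deltas|\dot{\mb X}|^2$ is recovered for the a priori estimate.
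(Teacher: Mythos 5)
Your plan follows the paper's proof essentially step for step: the $a$-weighted relative entropy in the $(v,h)$ variables, completion of the square to extract $G_1$, the choice of $M$ in \eqref{X(t)} so that $\dot{\mb X}\mb Y$ cancels the weighted mean of $w=p(v)-p(\wt v)$, the change of variable $y$ on the shock profile feeding into the sharp Poincar\'e inequality of Lemma \ref{lem-poin}, wave-separation estimates for the rarefaction/shock interaction terms, and the Young-split on $\|(\wt v^R)_{\xi\xi}\|_{L^1}^{4/3}$ producing the $\deltar^{1/3}$ loss. Aside from cosmetic slips (the paper's weight satisfies $1<a<1+\lambda$ rather than $\tfrac12\le a\le\tfrac32$, and the monotone variable is $y=(p(v_m)-p(\wt v^S))/\deltas$), this is the same argument.
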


\subsection{Wave interaction estimates}
We here present useful estimates for the error terms $F_1, F_2, F_3$ introduced in \eqref{ff1f2} and \eqref{f1f2}. 
First, we notice that the a priori assumption \eqref{apri-ass} with the Sobolev embedding and \eqref{pressure2} implies
\beq\label{smp1}
\|p(v)-p(\wt v)\|_{L^\infty((0,T)\times\bbr)}\le C\|v-\wt v\|_{L^\infty((0,T)\times\bbr)} \le C\eps_1.
\eeq
This smallness together with \eqref{X(t)}, \eqref{apri-ass} and \eqref{pressure2} yields that
\beq\label{dxbound}
|\dot{\mb X}(t)| \le \frac{C}{\deltas}  \||p(v)-p(\wt v)|+|v-\wt v| \|_{L^\infty(\bbr)} \int_\bbr (\wt v^S)^{-\mb X}_\xi d\xi \le C\|v-\wt v\|_{L^\infty(\bbr)}.
\eeq
This especially proves \eqref{xprop}, and 
will be used to get the wave interaction estimates in Lemma \ref{lemma2.2}.

\begin{lemma}\label{lemma2.2}
Let $\mb X$ be the shift defined by \eqref{X(t)}. Under the same hypotheses as in Proposition \ref{prop2}, the following holds: $\forall t\le T$,\\
\begin{align*}
&\|(\wt v^S)^{-\mb{X}}_{\xi} (\wt v^R-v_m) \|_{L^1(\bbr)} + \| (\wt v^R)_\xi (\wt v^S)^{-\mb{X}}_\xi \|_{L^1(\bbr)} \le C\deltar \deltas e^{-C \deltas t},\\
&\|(\wt v^S)^{-\mb{X}}_{\xi} (\wt v^R-v_m) \|_{L^2(\bbr)} + \| (\wt v^R)_\xi (\wt v^S)^{-\mb{X}}_\xi \|_{L^2(\bbr)} \le C\deltar \deltas^{3/2} e^{-C \deltas t},\\
&\|(\wt v^R)_\xi ((\wt v^S)^{-\mb X}-v_m)\|_{L^2(\bbr)} \le C\deltar \deltas e^{-C \deltas t} .
\end{align*}
\end{lemma}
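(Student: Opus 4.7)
My strategy is to exploit the \emph{linear separation} of the two waves in the $\xi$-frame. By Lemma \ref{lemma1.3}, the shock derivative $(\wt v^S)^{-\mb X}_\xi$ is exponentially concentrated at $\xi=\mb X(t)$, while $\wt v^R(t,\xi+\sigma t)$ and $(\wt v^R)_\xi$ are supported essentially in the fan $\xi\in((\lambda_1(v_-)-\sigma)t,(\lambda_1(v_m)-\sigma)t)$, which lies entirely in the region $\xi\leq -c_0 t$ for $c_0:=\sigma-\lambda_1(v_m)>0$. The first step is to establish the sublinear bound $|\mb X(t)|\leq C\eps_1 t$ with small constant. Inserting into \eqref{X(t)} the a priori $L^\infty$ bound $\|v-\wt v_{-\mb X}\|_{L^\infty}\leq C\eps_1$ (Sobolev embedding applied to \eqref{apri-ass}) together with $\|\wt v^S_\xi\|_{L^1},\|\wt h^S_\xi\|_{L^1}\leq C\deltas$ yields $|\dot{\mb X}|\leq C\eps_1$, so for $\eps_1$ small enough $\mb X(t)+\sigma t\geq \sigma t/2$ holds for all $t\le T$.

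For the first pair of estimates, I change variable $y=\xi-\mb X(t)$, turning $(\wt v^S)^{-\mb X}_\xi$ into $\wt v^S_\xi(y)$ with its standard exponential concentration at $y=0$. The companion factor $\wt v^R-v_m$ or $(\wt v^R)_\xi$ is then evaluated at $x=y+\mb X+\sigma t$. I split the $y$-integral at $y=-\sigma t/2$. On $\{y\geq -\sigma t/2\}$ one has $x\geq \sigma t/4\geq 0$, so Lemma \ref{lemma1.2}(3) provides the pointwise decay $|\wt v^R-v_m|,|(\wt v^R)_\xi|\leq C\deltar e^{-2(|x|+|\lambda_1(v_m)|t)}$, whose integral against $|\wt v^S_\xi|\in L^1$ contributes $\leq C\deltar\deltas e^{-ct}$. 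On $\{y<-\sigma t/2\}$ the shock exponential $|\wt v^S_\xi(y)|\leq C\deltas^2 e^{-C\deltas|y|}$ is pointwise $\leq C\deltas^2 e^{-c\deltas t}$, so the trivial bounds $|\wt v^R-v_m|\leq C\deltar$, $|(\wt v^R)_\xi|\leq C\deltar$ combined with $\|\wt v^S_\xi\|_{L^1}\leq C\deltas$ yield a contribution $\leq C\deltar\deltas e^{-c\deltas t}$. Combining proves the $L^1$ estimate, and the $L^2$ version follows by extracting an extra $\|\wt v^S_\xi\|_{L^\infty}^{1/2}\leq C\deltas$.

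For the third estimate I again translate $y=\xi-\mb X$, so the active factor is $(\wt v^R)_\xi(t,y+\mb X+\sigma t)$. Its core support is the fan, corresponding to $y\in((\lambda_1(v_-)-\sigma)t-\mb X,(\lambda_1(v_m)-\sigma)t-\mb X)$, which by the shift bound lies in $\{y\leq -c_0 t/2\}$. On this set the shock tail satisfies $|\wt v^S(y)-v_m|\leq C\deltas e^{-C\deltas|y|}\leq C\deltas e^{-c\deltas t}$, so the fan contribution to the squared $L^2$-norm is $\leq C\deltas^2 e^{-c\deltas t}\|(\wt v^R)_\xi\|_{L^2(\mathrm{fan})}^2$. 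Using the sharp bound $\|(\wt v^R)_\xi\|_{L^2}\leq C\min(\deltar,\deltar^{1/2}(1+t)^{-1/2})$ from Lemma \ref{lemma1.2}(2) and checking the two regimes $t\leq 1/\deltar$ and $t>1/\deltar$ separately, one obtains $\leq C\deltar^2\deltas^2 e^{-c\deltas t}$ in both cases. The complementary region, where $(\wt v^R)_\xi$ has only its exponentially small tails controlled by Lemma \ref{lemma1.2}(3)-(4), is handled similarly by pairing these tails with $|\wt v^S-v_m|\leq C\deltas$ pointwise and integrating.

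The main obstacle is the careful bookkeeping of the distinct regions---shock core, rarefaction fan, and the intermediate ``separation zone''---because in no single region does one of the two profiles provide pointwise exponential decay in both $y$ and $t$ simultaneously; the exponential-in-$t$ factor must always be produced by the positive separation rate $c_0>0$ applied to the exponentially decaying tail of one profile evaluated on the support of the other. All such bounds ultimately rest on the sublinear shift estimate $|\mb X(t)|\leq C\eps_1 t$ derived in the first step.
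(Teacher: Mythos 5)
Your proposal is correct and follows essentially the same route as the paper: establish $|\mb X(t)|\le C\eps_1 t$ from \eqref{X(t)} and the a priori bound, split the line at $\pm\sigma t/2$, and in each half use the exponential tail of whichever profile is far from its own center, with the $L^2$ bounds obtained by interpolating against the pointwise bounds. One small imprecision in your third estimate: the complementary region $\{y>-c_0t/2\}$ still contains part of the sliver $\lambda_1(v_m)t<x<0$, where neither part (3) nor part (4) of Lemma \ref{lemma1.2} gives exponential smallness of $(\wt v^R)_\xi$, so "pairing the tails with $|\wt v^S-v_m|\le C\deltas$" does not by itself produce the factor $e^{-C\deltas t}$ there; the fix is simply to split at $y=-\sigma t/2$ (as you do for the first two estimates, and as the paper does) and note that on that sliver $|y|\ge\sigma t/2$, so the shock tail $|\wt v^S(y)-v_m|\le C\deltas e^{-C\deltas\sigma t/4}$ supplies the time decay.
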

\begin{proof}
First, by \eqref{dxbound} with \eqref{smp1}, it holds that
$$
|\dot{\mb{X}}(t)|\leq C\eps_1,\qquad 0\le t\le T,
$$
which together with $\mb X(0)=0$ yields
$$
|\mb{X}(t)|\leq C\eps_1 t,\qquad 0\le t\le T.
$$
Let us take $\eps_1$ so small such that the above bound is less than $ \frac{\s t}{4}$, that is,
\[
C\eps_1t < \frac{\s t}{4}.
\]
Then, since
\begin{align*}
\forall \xi<-\frac{\s t}{2},\quad & \xi-\mb X(t) < -\frac{\s t}{2} +C\eps_1 t <-\frac{\s t}{4}<0 \quad\mbox{and} \\
&| \xi-\mb X(t)| \ge |\xi| -|\mb X(t)| > \frac{\s t}{2} -C\eps_1 t > \frac{\s t}{4},
\end{align*}
it holds from Lemma \ref{lemma1.3} that
\begin{align*}
\forall \xi<-\frac{\s t}{2},\quad |\wt v^S(\xi-\mb X(t)) -v_m | &\le C\deltas e^{-C\deltas |\xi -\mb X(t)|} \\
&\le C\deltas \exp\left(-\frac{C\deltas |\xi -\mb X(t)|}{2} \right) \exp\left(-\frac{C\deltas \s t}{8} \right) .
\end{align*}
Likewise, by Lemma \ref{lemma1.3},
\begin{align*}
\forall \xi<-\frac{\s t}{2},\quad |\partial_\xi \wt v^S(\xi-\mb X(t))| &\le C\deltas^2 e^{-C\deltas |\xi -\mb X(t)|} \\
&\le C\deltas^2 \exp\left(-\frac{C\deltas |\xi -\mb X(t)|}{2} \right) \exp\left(-\frac{C\deltas \s t}{8} \right) .
\end{align*}
On the other hand, since
\[
\forall \xi\ge -\frac{\s t}{2},\quad x=\xi +\s t \ge \frac{\s t}{2} \ge 0, 
\]
it holds from Lemma \ref{lemma1.2} that
\[
\forall \xi\ge -\frac{\s t}{2},\quad |\wt v^R(t,\xi+\s t) -v_m | + |\partial_\xi \wt v^R(t,\xi+\s t)| \le C\deltar e^{-2( |\xi +\s t| + |\lambda_1(v_m)| t )},
\] 
where note that $|\lambda_1(v_m)|>0$ is $O(1)$-constant, since $\frac{v_+}{2}\le v_m\le v_+$.\\
Therefore, using the above estimates together with the bounds: (by Lemmas \ref{lemma1.3} and \ref{lemma1.2})
\begin{align*}
\forall \xi,\quad & |\wt v^R(t,\xi+\s t) -v_m | + |\partial_\xi \wt v^R(t,\xi+\s t)| \le C\deltar,\\
& |\wt v^S(\xi-\mb X(t)) -v_m | \le C\deltas,\qquad |\partial_\xi \wt v^S(\xi-\mb X(t))|  \le C\deltas^2,\\
& \|\partial_\xi \wt v^R(t,\cdot+\s t)\|_{L^1(\bbr)} \le C\deltar,\quad\forall t,
\end{align*}
we have
\begin{align*}
\big|(\wt v^S)^{-\mb{X}}_{\xi}\big| \big(|\wt v^R-v_m| + |(\wt v^R)_\xi |\big)  \le   \begin{cases}
    \displaystyle C\deltar\deltas^2 e^{-C\deltas |\xi-\mb X(t)|} e^{-C \deltas t},\quad \mbox{if } \xi<-\frac{\s t}{2}, \\
    \displaystyle C\deltar\deltas^2 e^{-C |\xi+\s t|} e^{-C t},\qquad \mbox{if } \xi\ge -\frac{\s t}{2},
  \end{cases}
\end{align*}
and
\begin{align*}
|(\wt v^R)_\xi| |(\wt v^S)^{-\mb X}-v_m| \le   \begin{cases}
    \displaystyle C|(\wt v^R)_\xi| \deltas e^{-C\deltas |\xi-\mb X(t)|} e^{-C \deltas t},\quad \mbox{if } \xi<-\frac{\s t}{2}, \\
    \displaystyle C\deltar\deltas e^{-C |\xi+\s t|} e^{-C t},\qquad \mbox{if } \xi\ge -\frac{\s t}{2}.
  \end{cases}
\end{align*}
Hence, this with the smallness of $\deltas$ implies that 
\begin{align*}
\int_\bbr \Big|\big|(\wt v^S)^{-\mb{X}}_{\xi}\big| \big(|\wt v^R-v_m| + |(\wt v^R)_\xi |\big)\Big| d\xi &\le C\deltar \deltas e^{-C \deltas t} \int_\bbr \deltas \left(  e^{-C\deltas |\xi-\mb X(t)|} +   e^{-C |\xi+\s t|}   \right) d\xi \\
&\le C\deltar \deltas e^{-C \deltas t},
\end{align*}
\begin{align*}
\int_\bbr \Big|\big|(\wt v^S)^{-\mb{X}}_{\xi}\big| \big(|\wt v^R-v_m| + |(\wt v^R)_\xi |\big)\Big|^2 d\xi &\le C\deltar^2 \deltas^3 e^{-C \deltas t} \int_\bbr \deltas \left(  e^{-C\deltas |\xi-\mb X(t)|} +   e^{-C |\xi+\s t|}   \right) d\xi \\
&\le C\deltar^2 \deltas^3 e^{-C \deltas t},
\end{align*}
and
\begin{align*}
\int_\bbr |(\wt v^R)_\xi|^2 |(\wt v^S)^{-\mb X}-v_m|^2 d\xi 
&\le C\deltar\deltas^2 e^{-C\deltas t} \int_{\bbr} |(\wt v^R)_\xi| d\xi +C\deltar^2\deltas^2 e^{-Ct} \int_{\bbr} e^{-C |\xi+\s t|}  d\xi  \\
&\le C\deltar^2 \deltas^2 e^{-C \deltas t}.
\end{align*}
\end{proof}

\subsection{Construction of weight function}
We define the weight function $a$ by
\begin{equation}\label{weight}
a(\xi):=1+\frac{\lam}{\deltas}(p(v_m)-p(\wt v^S(\xi))),
\end{equation}
where the constant $\lam$ is chosen to be so small but far bigger than $\deltas$ such that 
\beq\label{lamsmall}
\deltas\ll \lam \le C\sqrt{\deltas}.
\eeq
Notice that 
\begin{equation}\label{a-bound}
1<a(\xi)<1+\lam,
\end{equation}
and 
\begin{equation}\label{a-prime}
a^\prime(\xi)=-\frac{\lam}{\deltas}p^\prime(\wt v^S)\wt v^S_\xi>0,
\end{equation}
and so,
\beq\label{d-weight}
|a'|\sim \frac{\lam}{\deltas} |\wt v^S_\xi|.
\eeq

\subsection{Relative entropy method} \label{ssec:ent}
We rewrite \eqref{hNS-1} into the  viscous hyperbolic system of conservation laws:
\beq\label{system-0}
\partial_t U +\partial_\xi A(U)= { \big(\ln v\big)_{\xi\xi} \choose 0},
\eeq
where 
\[
U:={v \choose h},\quad A(U):={-\s v -h \choose -\s h+p(v)}.
\]
Consider the entropy $\eta(U):=\frac{h^2}{2}+Q(v)$ of \eqref{system-0}, where $Q(v)=\frac{v^{-\gamma+1}}{\gamma-1}$, i.e., $Q'(v)=-p(v)$.\\
To write the above viscous term in terms of the derivative of the entropy:
\beq\label{nablae}
\nabla\eta(U)={-p(v)\choose h},
\eeq
 we observe that
\[
\big(\ln v\big)_{\xi\xi}=\left(\frac{(-p(v))_\xi}{-p'(v)v}\right)_{\xi},
\]
especially, by $-p'(v)v=\gamma p(v)$,
\[
\big(\ln v\big)_{\xi\xi}=\left(\frac{(-p(v))_\xi}{\gamma p(v)}\right)_{\xi} .
\]
Thus, using the nonnegative matrix 
\[
M(U):={\frac{1}{\gamma p(v)} \quad 0 \choose\quad\  0\quad\ 0},
\]
the above system \eqref{system-0} can be rewritten as
\beq\label{system-vh}
\partial_t U +\partial_\xi A(U)= \partial_\xi\Big(M(U) \partial_\xi\nabla\eta(U) \Big).
\eeq

Let
\begin{align}
\begin{aligned}\label{tilvh}
\wt U(t,\xi):= {\di \wt v(t,\xi) \choose \di \wt h(t,\xi)} = {\wt v^R(t,\xi)+(\wt v^S)^{-\mb{X}}(\xi)-v_m \choose \wt u^R(t,\xi)+(\wt h^S)^{-\mb{X}}(\xi)-u_m}.
\end{aligned}
\end{align}
Note that \eqref{SWS} can be written as
\beq\label{tilueq}
\partial_t \wt U  +\partial_\xi A(\wt U)= \partial_\xi\Big(M(\wt U) \partial_\xi\nabla\eta(\wt U) \Big) -\dot{\mb{X}} \partial_\xi \big((\wt U^S)^{-\mb{X}} \big)+ \bmat{F_3}\\ {F_2}\emat ,
\eeq
where $F_2, F_3$ are defined in \eqref{ff1f2}, \eqref{f1f2} respectively.
Consider the relative entropy functional defined by
\beq\label{defent}
\eta(U|V)=\eta(U)-\eta(V) -\nabla\eta(V) (U-V),
\eeq
and the relative flux defined by
\beq\label{defa}
A(U|V)=A(U)-A(V) -\nabla A(V) (U-V).
\eeq
Let $G(\cdot;\cdot)$ be the flux of the relative entropy defined by
\beq\label{defg}
G(U;V) = G(U)-G(V) -\nabla \eta(V) (A(U)-A(V)),
\eeq
where $G$ is the entropy flux of $\eta$, i.e., $\partial_{i}  G (U) = \sum_{k=1}^{2}\partial_{k} \eta(U) \partial_{i}  A_{k} (U),\quad 1\le i\le 2$.\\
By a straightforward computation, for the system \eqref{system-0}, we have
\begin{align}
\begin{aligned}\label{relative_e}
&\eta(U|\wt U)=\frac{|h-\wt h |^2}{2} + Q(v|\wt v),\\
& A(U|\wt U)={0 \choose p(v|\wt v)},\\
&G(U;\wt U)=(p(v)-p(\wt v)) (h-\wt h)-\s \eta(U|\wt U),
\end{aligned}
\end{align}
where the relative pressure is defined as
\begin{equation}\label{pressure-relative}
p(v|w)=p(v)-p(w)-p'(w)(v-w).
\end{equation}

Below, we will estimate the relative entropy (weighted by $a(\xi)$ defined in \eqref{weight}) of the solution $U$ of \eqref{system-vh} w.r.t. the shifted wave \eqref{tilvh} as follows: 
\[
a^{-\mb X}(\xi)\eta\big(U(t,\xi)|\wt U(t,\xi) \big).
\]

\begin{lemma}\label{lem-zerovh}
Let $a$ be the weight function defined by \eqref{weight}. Let $U$ be a solution to \eqref{system-vh}, and $\wt U$ the shifted wave satisfying \eqref{tilvh}.
Then,
\begin{align}
\begin{aligned}\label{ineq-0}
\frac{d}{dt}\int_{\bbr} a^{-\mb X}(\xi)\eta\big(U(t,\xi)|\wt U(t,\xi) \big) d\xi =\dot{\mb X} (t) \mb{Y}(U) +\mathcal{J}^{bad}(U) - \mathcal{J}^{good}(U),
\end{aligned}
\end{align}
where
\begin{align}
\begin{aligned}\label{ybg-first}
&\mb Y(U):= - \int_{\bbr} \!a_\xi^{-\mb X} \eta(U|\wt U ) d\xi +\int_\bbr a^{-\mb X} \nabla^2\eta(\wt U) (\wt U^S)_\xi^{-\mb{X}}  (U-\wt U) d\xi,\\
&\mathcal{J}^{bad}(U):= \int_\bbr a_\xi^{-\mb X} \big(p(v)-p(\wt v)\big) \big(h-\wt h\big) d\xi + \s\int_\bbr a^{-\mb X}  (\wt v^S)^{-\mb{X}}_\xi p(v|\wt v) d\xi  \\
& \qquad\quad -\int_\bbr a_\xi^{-\mb X}\frac{p(v)-p(\wt v)}{\gamma p(v)}  \partial_\xi \big(p(v)-p(\wt v)\big)  d\xi +\int_\bbr a_\xi^{-\mb X} \big(p(v)-p(\wt v)\big)^2  \frac{\partial_{\xi} p(\wt v)}{\gamma p(v)p(\wt v)}   d\xi\\
& \qquad\quad   -\int_\bbr a^{-\mb X} \partial_\xi \big(p(v)-p(\wt v)\big)  \frac{p(\wt v)-p(v)}{\gamma p(v)p(\wt v)}\partial_{\xi} p(\wt v)  d\xi+\int_\bbr a^{-\mb X} (p(v)-p(\wt v)) F_3 d\xi \\
& \qquad\quad  -\int_\bbr a^{-\mb X} (h-\wt h) F_2 d\xi, \\
&\mathcal{J}^{good}(U):= \frac{\sigma}{2}\int_\bbr a_\xi^{-\mb X}\left| h-\wt h\right|^2 d\xi  +\sigma  \int_\bbr  a_\xi^{-\mb X} Q(v|\wt v) d\xi + \int_\bbr a^{-\mb X}  \wt u^R_\xi  p(v|\wt v) d\xi \\
& \qquad\quad +\int_\bbr \frac{a^{-\mb X}}{\gamma p(v)} |\partial_\xi \big(p(v)-p(\wt v)\big)|^2 d\xi.
\end{aligned}
\end{align}
\end{lemma}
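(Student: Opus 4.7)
The plan is a direct time-differentiation of the weighted relative entropy, substituting the evolutions \eqref{system-vh} and \eqref{tilueq} and regrouping the resulting terms. Since $a^{-\mb X}(\xi)=a(\xi-\mb X(t))$, the chain rule gives $\partial_t a^{-\mb X} = -\dot{\mb X}(t)\,a_\xi^{-\mb X}$, which immediately produces the first summand $-\int a_\xi^{-\mb X}\eta(U|\wt U)\,d\xi$ of $\mb Y(U)$. Expanding the remainder via \eqref{defent},
\[
\partial_t\eta(U|\wt U) = (\nabla\eta(U) - \nabla\eta(\wt U))\,\partial_t U - \nabla^2\eta(\wt U)\,\partial_t \wt U\cdot(U-\wt U),
\]
and substituting \eqref{system-vh}, \eqref{tilueq}, the $-\dot{\mb X}\,\partial_\xi(\wt U^S)^{-\mb X}$ piece of \eqref{tilueq} contributes the second summand of $\mb Y(U)$.

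For the hyperbolic contribution I would invoke the standard relative-entropy algebra
\[
-(\nabla\eta(U)-\nabla\eta(\wt U))\,\partial_\xi A(U) + \nabla^2\eta(\wt U)\,\partial_\xi A(\wt U)\cdot(U-\wt U) = -\partial_\xi G(U;\wt U) - \partial_\xi\nabla\eta(\wt U)\cdot A(U|\wt U),
\]
integrate against $a^{-\mb X}$, and integrate the flux by parts onto the weight to obtain $\int a_\xi^{-\mb X}\,G(U;\wt U)\,d\xi$. Via \eqref{relative_e} this splits into $-\s\int a_\xi^{-\mb X}\eta(U|\wt U)\,d\xi$, accounting for the $\s$-entries of $-\mathcal J^{good}$ since $a_\xi^{-\mb X}\ge 0$, together with the cross term $\int a_\xi^{-\mb X}(p(v)-p(\wt v))(h-\wt h)\,d\xi$ of $\mathcal J^{bad}$. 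For the remaining $-\int a^{-\mb X}\partial_\xi\nabla\eta(\wt U)\cdot A(U|\wt U)\,d\xi$, using $A(U|\wt U)=(0,p(v|\wt v))^T$ and splitting $\partial_\xi \wt h = \wt u^R_\xi + (\wt h^S)^{-\mb X}_\xi$ with $(\wt h^S)^{-\mb X}_\xi = -\s(\wt v^S)^{-\mb X}_\xi - (\ln(\wt v^S)^{-\mb X})_{\xi\xi}$ from \eqref{hVS}, I recover the $\wt u^R_\xi\,p(v|\wt v)$ entry of $\mathcal J^{good}$, the $\s(\wt v^S)^{-\mb X}_\xi\,p(v|\wt v)$ entry of $\mathcal J^{bad}$, and a leftover $\int a^{-\mb X}(\ln(\wt v^S)^{-\mb X})_{\xi\xi}\,p(v|\wt v)\,d\xi$ to be paired with the viscous pieces below.

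For the parabolic contributions $(\nabla\eta(U)-\nabla\eta(\wt U))\cdot\partial_\xi[M(U)\partial_\xi\nabla\eta(U)]$ and $-\nabla^2\eta(\wt U)\,\partial_\xi[M(\wt U)\partial_\xi\nabla\eta(\wt U)](U-\wt U)$, together with the forcing $-\nabla^2\eta(\wt U)(F_3,F_2)^T(U-\wt U)$, the diagonal form of $M$ collapses everything to the pressure channel. Using $F_3 = (\ln(\wt v^S)^{-\mb X})_{\xi\xi} - (\ln\wt v)_{\xi\xi}$, combining with the above leftover, and invoking the Taylor identity $p(v|\wt v) + p'(\wt v)(v-\wt v) = p(v)-p(\wt v)$, the sum reassembles into $(p(v)-p(\wt v))F_3 + (p(v)-p(\wt v))(\ln(\wt v/v))_{\xi\xi} - F_2(h-\wt h)$. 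The first and third terms reproduce the $F_3$ and $F_2$ entries of $\mathcal J^{bad}$; for the middle term, one further integration by parts against $a^{-\mb X}$, expanded via $(\ln(\wt v/v))_\xi = (\gamma p(v))^{-1}\partial_\xi(p(v)-p(\wt v)) - (p(v)-p(\wt v))(\gamma p(v)p(\wt v))^{-1}\partial_\xi p(\wt v)$, yields the dissipation entry of $\mathcal J^{good}$ together with the three remaining cross entries of $\mathcal J^{bad}$. The main obstacle is exactly this rearrangement step: the forcing $F_3$ and the log-derivative parabolic terms only collapse to the stated form after simultaneously invoking \eqref{hVS} and the relative-pressure Taylor identity; once the matching is done, the identity \eqref{ineq-0} is obtained by summing all the pieces.
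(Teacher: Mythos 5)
Your proposal is correct and follows essentially the same route as the paper: direct differentiation of the weighted relative entropy, the standard relative-entropy flux algebra, integration by parts onto the weight, and the use of \eqref{hVS} together with the Taylor identity $p(v|\wt v)+p'(\wt v)(v-\wt v)=p(v)-p(\wt v)$ to reassemble the viscous and forcing pieces. The only difference is organizational (the paper invokes \eqref{hVS} at the end when combining $I_2+I_5+I_7$, whereas you apply it inside the $A(U|\wt U)$ term and cancel the leftover against the $p(v|\wt v)F_3$ contribution), and all your intermediate identities check out.
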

\begin{remark}
Since $a'(\xi) >0$ and $u^R_\xi >0$ by Lemma \ref{lemma1.2}, $-\mathcal{J}^{good}$ consists of good terms, while $\mathcal{J}^{bad}$ consists of bad terms. 
\end{remark}

\begin{proof}
By the definition of the relative entropy with \eqref{system-vh} and \eqref{defent}, we first have
\begin{align*}
&\frac{d}{dt}\int_{\bbr}  a^{-\mb X}(\xi) \eta\big(U(t,\xi)|\wt U(t,\xi) \big) d\xi =-\dot{\mb X} (t)\int_{\bbr} \!a_\xi^{-\mb X} \eta(U|\wt U ) d\xi  \\
&\quad+\int_\bbr \!\!  a^{-\mb X} \bigg[\Big(\nabla\eta(U)-\nabla\eta(\wt U)\Big) \partial_t U  -\nabla^2\eta(\wt U) (U-\wt U) \partial_t \tilde U \bigg] d\xi\\
&=-\dot{\mb X} (t)\int_{\bbr} \!a_\xi^{-\mb X} \eta(U|\wt U ) d\xi  + \int_\bbr \!\!  a^{-\mb X}\bigg[\Big(\nabla\eta(U)-\nabla\eta(\wt  U)\Big)\!\Big(\!\!\!-\partial_\xi A(U)+ \partial_\xi\Big(M(U)\partial_\xi\nabla\eta(U) \Big) \Big)\\
&\qquad -\nabla^2\eta(\wt U) (U-\wt U) \left(-\partial_\xi A(\wt U)+ \partial_\xi\Big(M(\wt U)\partial_\xi\nabla\eta(\wt U) \Big)-\dot{\mb{X}} \partial_\xi \big((\wt U^S)^{-\mb{X}} \big)+ \bmat{F_3}\\ {F_2}\emat \right)  \bigg] d\xi.\\
\end{align*}
Using the definitions \eqref{defa} and \eqref{defg} with the same computation as in \cite[Lemma 4]{Vasseur_Book}) (see also \cite[Lemma 2.3]{Kang-V-NS17}), we have
\[
\frac{d}{dt}\int_{\bbr} a^{-\mb X}(\xi)\eta\big(U(t,\xi)|\wt U(t,\xi) \big) d\xi  =\dot{\mb X} (t) \mb Y(U)  +\sum_{i=1}^6 I_i,
\]
\begin{align}
\begin{aligned}\label{genrhs}
&I_1:=-\int_\bbr a^{-\mb X} \partial_\xi G(U;\wt U) d\xi,\\
&I_2:=- \int_\bbr a^{-\mb X} \partial_\xi \nabla\eta(\wt U) A(U|\wt U) d\xi,\\
&I_3:=\int_\bbr a^{-\mb X} \Big( \nabla\eta(U)-\nabla\eta(\wt U)\Big) \partial_\xi \Big(M(U) \partial_\xi \big(\nabla\eta(U)-\nabla\eta(\wt U)\big) \Big)  d\xi, \\
&I_4:=\int_\bbr a^{-\mb X} \Big( \nabla\eta(U)-\nabla\eta(\wt U)\Big) \partial_\xi \Big(\big(M(U)-M(\wt U)\big) \partial_\xi \nabla\eta(\wt U) \Big)  d\xi, \\
&I_5:=\int_\bbr a^{-\mb X}(\nabla\eta)(U|\wt U)\partial_\xi \Big(M(\wt U) \partial_\xi \nabla\eta(\wt U) \Big)  d\xi,\\
&I_6:=-\int_\bbr a^{-\mb X} \nabla^2\eta(\wt U) (U-\wt U)   \bmat{F_3}\\ {F_2}\emat   d\xi.
\end{aligned}
\end{align}
Using \eqref{relative_e} and \eqref{nablae}, we have
\begin{align*}
\begin{aligned}
I_1&=\int_\bbr  a_\xi^{-\mb X} G(U;\wt U) d\xi = \int_\bbr a_\xi^{-\mb X} \Big(\big(p(v)-p(\wt v)\big) \big(h-\wt h\big)  -\sigma \eta(U|\wt U)\Big)  d\xi\\
&= \int_\bbr a_\xi^{-\mb X} \big(p(v)-p(\wt v)\big) \big(h-\wt h\big) d\xi -\frac{\sigma}{2}\int_\bbr a_\xi^{-\mb X}\left| h-\wt h\right|^2 d\xi  -\sigma  \int_\bbr  a_\xi^{-\mb X} Q(v|\wt v) d\xi ,\\
I_2&=-\int_\bbr a^{-\mb X} \wt h_\xi p(v|\wt v) d\xi.
\end{aligned}
\end{align*}
By integration by parts, we have
\begin{align*}
\begin{aligned}
I_3&=\int_\bbr a^{-\mb X} \big(p(v)-p(\wt v)\big)\partial_{\xi}\Big(\frac{1}{\gamma p(v)} \partial_{\xi}\big(p(v)-p(\wt v)\big) \Big) d\xi \\
&=-\int_\bbr \frac{a^{-\mb X}}{\gamma p(v)} |\partial_\xi \big(p(v)-p(\wt v)\big)|^2 d\xi -\int_\bbr a_\xi^{-\mb X}\frac{p(v)-p(\wt v)}{\gamma p(v)}  \partial_\xi \big(p(v)-p(\wt v)\big)  d\xi,\\
I_4&= \int_\bbr a^{-\mb X} \big(p(v)-p(\wt v)\big)\partial_{\xi}\Big( \frac{p(\wt v)-p(v)}{\gamma p(v)p(\wt v)}  \partial_{\xi} p(\wt v) \Big) d\xi \\
&=\int_\bbr a_\xi^{-\mb X} \big(p(v)-p(\wt v)\big)^2  \frac{\partial_{\xi} p(\wt v)}{\gamma p(v)p(\wt v)}   d\xi  -\int_\bbr a^{-\mb X} \partial_\xi \big(p(v)-p(\wt v)\big)  \frac{p(\wt v)-p(v)}{\gamma p(v)p(\wt v)}\partial_{\xi} p(\wt v)  d\xi.
\end{aligned}
\end{align*}
Using \eqref{nablae} and
\beq\label{sec-ent}
\nabla^2\eta(U)= \begin{pmatrix}
  -p'(v) & 0 \\
  0 & 1 
 \end{pmatrix},
\eeq
we have
\[
I_5 =-\int_\bbr a^{-\mb X} p(v|\wt v) (\ln\wt v)_{\xi\xi} d\xi,
\]
and
\[
I_6= \int_\bbr a^{-\mb X} p'(\wt v) (v-\wt v) F_3 d\xi -\int_\bbr a^{-\mb X} (h-\wt h) F_2 d\xi .
\]
Especially, since
\[
I_6= \underbrace{-\int_\bbr a^{-\mb X} p(v|\wt v) F_3 d\xi}_{=:I_7} +\int_\bbr a^{-\mb X} (p(v)-p(\wt v)) F_3 d\xi -\int_\bbr a^{-\mb X} (h-\wt h) F_2 d\xi ,
\]
we use \eqref{huvx} and \eqref{hVS} to have
\begin{align*}
\begin{aligned}
I_2+I_5+I_7&=- \int_\bbr a^{-\mb X} \left(\wt u^R_\xi + (\wt h^S)^{-\mb{X}}_\xi  +(\ln \wt v^S)^{-\mb X}_{\xi\xi} \right) p(v|\wt v) d\xi\\
&=-\int_\bbr a^{-\mb X}  \left(\wt u^R_\xi -\s   (\wt v^S)^{-\mb{X}}_\xi \right) p(v|\wt v) d\xi.
\end{aligned}
\end{align*}
Therefore, we have
\begin{align*}
\begin{aligned}
&\frac{d}{dt}\int_{\bbr}  a^{-\mb X}(\xi) \eta\big(U(t,\xi)|\wt U(t,\xi) \big) d\xi \\
& =\dot{\mb X} (t) \mb{Y}(U) + \int_\bbr a_\xi^{-\mb X} \big(p(v)-p(\wt v)\big) \big(h-\wt h\big) d\xi -\frac{\sigma}{2}\int_\bbr a_\xi^{-\mb X}\left| h-\wt h\right|^2 d\xi  -\sigma  \int_\bbr  a_\xi^{-\mb X} Q(v|\wt v) d\xi  \\
&\quad -\int_\bbr a^{-\mb X}  \left(\wt u^R_\xi -\s (\wt v^S)^{-\mb{X}}_\xi \right) p(v|\wt v) d\xi +\int_\bbr a^{-\mb X} (p(v)-p(\wt v)) F_3 d\xi -\int_\bbr a^{-\mb X} (h-\wt h) F_2 d\xi \\
&\quad -\int_\bbr \frac{a^{-\mb X}}{\gamma p(v)} |\partial_\xi \big(p(v)-p(\wt v)\big)|^2 d\xi -\int_\bbr a_\xi^{-\mb X}\frac{p(v)-p(\wt v)}{\gamma p(v)}  \partial_\xi \big(p(v)-p(\wt v)\big)  d\xi\\
&\quad +\int_\bbr a_\xi^{-\mb X} \big(p(v)-p(\wt v)\big)^2  \frac{\partial_{\xi} p(\wt v)}{\gamma p(v)p(\wt v)}   d\xi  -\int_\bbr a^{-\mb X} \partial_\xi \big(p(v)-p(\wt v)\big)  \frac{p(\wt v)-p(v)}{\gamma p(v)p(\wt v)}\partial_{\xi} p(\wt v)  d\xi.
\end{aligned}
\end{align*}
\end{proof}

\subsection{Maximization in terms of $h-\wt h$}\label{sec:mini}

On the right-hand side of \eqref{ineq-0}, we will use Lemma \ref{lem-poin} for the diffusion term in order to control the bad terms only related to the perturbation $p(v)-p(\wt v)$ (or $v-\wt v$).  Therefore, we will rewrite $\mathcal{J}^{bad}$ into the maximized representation in terms of $h-\wt h$ in the following lemma.

\begin{lemma}\label{lem-max}
Let $a:\bbr\to\bbr^+$ be as in \eqref{weight}, and $\wt U$ be the shifted wave as in \eqref{tilvh}. 
Then, for any $U\in \bbr^+\times\bbr$, 
\begin{align}
\begin{aligned}\label{ineq-1}
\mathcal{J}^{bad} (U) -\mathcal{J}^{good} (U)= \mathcal{B}(U)- \mathcal{G}(U),
\end{aligned}
\end{align}
where
\begin{align}
\begin{aligned}\label{badgood}
&\mathcal{B}(U):= \frac{1}{2\sigma} \int_\bbr a_\xi^{-\mb X} \big|p(v)-p(\wt v)\big|^2 d\xi + \s\int_\bbr a^{-\mb X}  (\wt v^S)^{-\mb{X}}_\xi p(v|\wt v) d\xi  \\
& \qquad\quad -\int_\bbr a_\xi^{-\mb X}\frac{p(v)-p(\wt v)}{\gamma p(v)}  \partial_\xi \big(p(v)-p(\wt v)\big)  d\xi +\int_\bbr a_\xi^{-\mb X} \big(p(v)-p(\wt v)\big)^2  \frac{\partial_{\xi} p(\wt v)}{\gamma p(v)p(\wt v)}   d\xi\\
& \qquad\quad   -\int_\bbr a^{-\mb X} \partial_\xi \big(p(v)-p(\wt v)\big)  \frac{p(\wt v)-p(v)}{\gamma p(v)p(\wt v)}\partial_{\xi} p(\wt v)  d\xi+\int_\bbr a^{-\mb X} (p(v)-p(\wt v)) F_3 d\xi \\
& \qquad\quad  -\int_\bbr a^{-\mb X} (h-\wt h) F_2 d\xi, \\
&\mathcal{G}(U):= \frac{\sigma}{2}\int_\bbr a_\xi^{-\mb X}\left| h-\wt h-\frac{p(v)-p(\wt v)}{\sigma}\right|^2 d\xi  +\sigma  \int_\bbr  a_\xi^{-\mb X} Q(v|\wt v) d\xi + \int_\bbr a^{-\mb X}  \wt u^R_\xi  p(v|\wt v) d\xi \\
& \qquad\quad +\int_\bbr \frac{a^{-\mb X}}{\gamma p(v)} |\partial_\xi \big(p(v)-p(\wt v)\big)|^2 d\xi.
\end{aligned}
\end{align}

\begin{remark}\label{rem:0}
Since $\sigma a_\xi >0$ and $a>0$, $-\mathcal{G}$ consists of four good terms. 
\end{remark}

\end{lemma}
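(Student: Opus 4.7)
The identity is purely algebraic: the two sides of \eqref{ineq-1} agree term by term except for the block of quadratic expressions involving $h-\wt h$, so the entire content of the lemma is a completion of squares applied pointwise in $\xi$ (the "maximization" of the title being the optimization of a scalar quadratic in $h-\wt h$).

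The plan is the following. First, I would line up $\mathcal{J}^{bad}(U)-\mathcal{J}^{good}(U)$ with $\mathcal{B}(U)-\mathcal{G}(U)$ and observe that the following terms appear identically in both expressions and therefore cancel when we take the difference:
\begin{itemize}
\item the shock term $\sigma\int a^{-\mb X}(\wt v^S)^{-\mb X}_\xi\, p(v|\wt v)\,d\xi$;
\item the two cross terms involving $\partial_\xi(p(v)-p(\wt v))$ multiplied by $(p(v)-p(\wt v))\,a_\xi^{-\mb X}/(\gamma p(v))$ and by $(p(\wt v)-p(v))\,\partial_\xi p(\wt v)/(\gamma p(v)p(\wt v))$;
\item the quadratic term $\int a_\xi^{-\mb X}(p(v)-p(\wt v))^2\partial_\xi p(\wt v)/(\gamma p(v)p(\wt v))\,d\xi$;
\item the error terms $\int a^{-\mb X}(p(v)-p(\wt v))F_3\,d\xi$ and $-\int a^{-\mb X}(h-\wt h)F_2\,d\xi$;
\item on the $\mathcal{G}$ side, the rarefaction term $\int a^{-\mb X}\wt u^R_\xi\, p(v|\wt v)\,d\xi$, the entropy term $\sigma\int a_\xi^{-\mb X}Q(v|\wt v)\,d\xi$, and the diffusion term $\int \frac{a^{-\mb X}}{\gamma p(v)}|\partial_\xi(p(v)-p(\wt v))|^2\,d\xi$.
\end{itemize}
After these cancellations only the following identity remains to be verified:
\begin{equation*}
\int_\bbr a_\xi^{-\mb X}\bigl(p(v)-p(\wt v)\bigr)(h-\wt h)\,d\xi-\frac{\sigma}{2}\int_\bbr a_\xi^{-\mb X}|h-\wt h|^2\,d\xi
=\frac{1}{2\sigma}\int_\bbr a_\xi^{-\mb X}|p(v)-p(\wt v)|^2\,d\xi-\frac{\sigma}{2}\int_\bbr a_\xi^{-\mb X}\left|h-\wt h-\frac{p(v)-p(\wt v)}{\sigma}\right|^2 d\xi.
\end{equation*}

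The second step is to establish this identity pointwise. Setting $X=h-\wt h$ and $Y=p(v)-p(\wt v)$, this reduces to the elementary algebraic equality
\begin{equation*}
XY-\frac{\sigma}{2}X^2=\frac{Y^2}{2\sigma}-\frac{\sigma}{2}\left(X-\frac{Y}{\sigma}\right)^2,
\end{equation*}
which is verified by direct expansion of the square on the right. Multiplying by $a_\xi^{-\mb X}$ (which is nonnegative by \eqref{a-prime}) and integrating over $\bbr$ gives the required identity. Assembling the cancellations with this completed-square identity yields \eqref{ineq-1}.

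There is no real obstacle here: the only point requiring care is bookkeeping, namely making sure that every term of $\mathcal{J}^{bad}-\mathcal{J}^{good}$ as written in \eqref{ybg-first} appears in $\mathcal{B}-\mathcal{G}$ (as defined in \eqref{badgood}), up to the single quadratic block $XY-\tfrac{\sigma}{2}X^2$ which is handled by the completion of squares. The identity is sharp: the quadratic $X\mapsto XY-\tfrac{\sigma}{2}X^2$ attains its maximum $Y^2/(2\sigma)$ at $X=Y/\sigma$, which explains both why $\mathcal{B}$ contains the pure $|p(v)-p(\wt v)|^2$ term with coefficient $1/(2\sigma)$ and why $\mathcal{G}$ retains the nonnegative residual $\tfrac{\sigma}{2}|h-\wt h-(p(v)-p(\wt v))/\sigma|^2$ that will be exploited as a good term in the subsequent Poincar\'e-type estimate.
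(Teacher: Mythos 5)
Your proposal is correct and follows exactly the paper's argument: all terms of $\mathcal{J}^{bad}-\mathcal{J}^{good}$ other than the block $\int a_\xi^{-\mb X}(p(v)-p(\wt v))(h-\wt h)\,d\xi-\frac{\sigma}{2}\int a_\xi^{-\mb X}|h-\wt h|^2\,d\xi$ are copied verbatim into $\mathcal{B}-\mathcal{G}$, and that block is handled by the same pointwise completion of squares in $z=h-\wt h$. The only cosmetic difference is that the paper invokes the generic identity $\alpha z^2+\beta z=\alpha(z+\frac{\beta}{2\alpha})^2-\frac{\beta^2}{4\alpha}$ while you expand the square directly; these are the same computation.
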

\begin{proof}
Let $J_1$ and $J_2$ be  the first terms of $\mathcal{J}^{bad}(U)$ and $-\mathcal{J}^{good}(U)$ respectively:
\begin{align*}
\begin{aligned}
&J_1:= \int_\bbr a_\xi^{-\mb X} \big(p(v)-p(\wt v)\big) \big(h-\wt h\big) d\xi,\\
&J_2:= -\frac{\sigma}{2}\int_\bbr a_\xi^{-\mb X}\left| h-\wt h\right|^2 d\xi.
\end{aligned}
\end{align*}
Applying the quadratic identity $\alpha z^2+ \beta z =\alpha(z+\frac{\beta}{2\alpha})^2-\frac{\beta^2}{4\alpha}$ with $z:=h-\wt h$ to the integrands of $J_1+J_2$, we have
\begin{align*}
\begin{aligned}
- \frac{\sigma}{2} \left| h-\wt h\right|^2 +\big(p(v)-p(\wt v)\big) (h-\wt h)  = - \frac{\sigma}{2} \left| h-\wt h -\frac{p(v)-p(\wt v)}{\sigma}\right|^2+ \frac{1}{2\sigma}|p(v)-p(\wt v)|^2.
\end{aligned}
\end{align*}
Therefore, we have the desired representation \eqref{ineq-1}-\eqref{badgood}.
\end{proof}

\subsection{Proof of Lemma \ref{lem-zvh}}
First of all, using Lemma \ref{lem-zerovh} and Lemma \ref{lem-max} together with a change of variable $\xi\mapsto\xi+\mb X(t)$, we have
\beq\label{mainrhs}
\frac{d}{dt}\int_{\bbr} a \eta\big(U^{\mb X}|\wt U^{\mb X} \big) d\xi =\dot{\mb X} (t) \mb{Y}(U^{\mb X}) +\mathcal{B}(U^{\mb X}) - \mathcal{G}(U^{\mb X}),
\eeq
where note from \eqref{tilvh} that 
\[
\wt U^{\mb X}:= {\wt v^{\mb X} \choose \wt h^{\mb X}} = {(\wt v^R)^{\mb X}+\wt v^S-v_m \choose (\wt u^R)^{\mb X}+\wt h^S-u_m}.
\]
For the bad terms and good terms, we use the following notations:
\begin{align}
\begin{aligned}\label{sbg}
&\mathcal{B}(U):=\sum_{i=1}^5 \mb B_i(U) +\mb S_1(U)+\mb S_2(U),\\
&\mathcal{G}(U):=\mb G_1(U)+\mb G_2(U)+\mb G^R(U) +\mb D(U),
\end{aligned}
\end{align}
where
\begin{align*}
\begin{aligned}
&\mb B_1(U):= \frac{1}{2\sigma} \int_\bbr a_\xi \big|p(v)-p(\wt v^{\mb X})\big|^2 d\xi,\\
&\mb B_2(U):=  \s\int_\bbr a  (\wt v^S)_\xi p(v|\wt v^{\mb X}) d\xi,  \\
&\mb B_3(U):=-\int_\bbr a_\xi\frac{p(v)-p(\wt v^{\mb X})}{\gamma p(v)}  \partial_\xi \big(p(v)-p(\wt v^{\mb X})\big)  d\xi ,\\
&\mb B_4(U):= \int_\bbr a_\xi \big(p(v)-p(\wt v^{\mb X})\big)^2  \frac{\partial_{\xi} p(\wt v^{\mb X})}{\gamma p(v)p(\wt v^{\mb X})}   d\xi,\\
&\mb B_5(U):= -\int_\bbr a \partial_\xi \big(p(v)-p(\wt v^{\mb X})\big)  \frac{p(\wt v^{\mb X})-p(v)}{\gamma p(v)p(\wt v^{\mb X})}\partial_{\xi} p(\wt v^{\mb X})  d\xi,\\
&\mb S_1(U):=\int_\bbr a (p(v)-p(\wt v^{\mb X})) \big(\ln \wt v^S -\ln \wt v^{\mb X}\big)_{\xi\xi} d\xi ,\\
&\mb S_2(U):=-\int_\bbr a (h-\wt h^{\mb X}) \big(p(\wt v^{\mb X})-p((\wt v^R)^{\mb X})-p(\wt v^S)\big)_\xi  d\xi,
\end{aligned}
\end{align*}
and
\begin{align*}
\begin{aligned}
&\mb G_1(U):= \frac{\sigma}{2}\int_\bbr a_\xi \left| h-\wt h^{\mb X}-\frac{p(v)-p(\wt v^{\mb X})}{\sigma}\right|^2 d\xi,\\
&\mb G_2(U):= \sigma  \int_\bbr  a_\xi Q(v|\wt v^{\mb X}) d\xi,\\
&\mb G^R(U):= \int_\bbr a  (\wt u^R_\xi)^{\mb X}  p(v|\wt v^{\mb X}) d\xi, \\
&\mb D(U):= \int_\bbr \frac{a}{\gamma p(v)} |\partial_\xi \big(p(v)-p(\wt v^{\mb X})\big)|^2 d\xi.
\end{aligned}
\end{align*}
For notational simplicity in this section, we omit the dependence of the solution on the shift, i.e., $(v,h)=(v^{\mb X},h^{\mb X})$.\\
First, note from \eqref{apri-ass} with the change of variable $\xi\mapsto\xi+\mb X(t)$ that
\beq\label{smpressure}
\|p(v)-p(\wt v^{\mb X})\|_{L^\infty((0,T)\times\bbr)}\le C\|v-\wt v^{\mb X}\|_{L^\infty((0,T)\times\bbr)} \le C \|v-\wt v^{\mb X}\|_{L^\infty(0,T;H^1(\bbr))} \le C\eps_1.
\eeq
Since the diffusion term $\mb D$ is related to the small perturbation of pressure, we will perform the Taylor expansion near $p(\wt v^{\mb X})$ for the leading order terms and then use Lemma \ref{lem-poin} on the sharp Poincar\'e inequality in the following lemma.\\
For $\mb Y$, we have from \eqref{relative_e} and \eqref{sec-ent} that
\begin{align*}
\begin{aligned}
\mb Y(U)&= - \int_{\bbr} \!a_\xi \eta(U|\wt U^{\mb X} ) d\xi +\int_\bbr a \nabla^2\eta(\wt U^{\mb X}) (\wt U^S)_\xi  (U-\wt U^{\mb X}) d\xi\\
&= -\int_{\bbr} \!a_\xi \left(\frac{|h-\wt h^{\mb X}|^2}{2} +Q(v|\wt v^{\mb X})  \right) d\xi \\
&\quad +\int_{\bbr} a  {\wt h}^S_\xi (h-\wt h^{\mb X}) d\xi-\int_{\bbr} a p^\prime(\wt v^{\mb X}) {\wt v}^S_\xi (v-\wt v^{\mb X}) d\xi.
\end{aligned}
\end{align*}
We decompose the functional $\mb Y$ as follows:
\[
\mb Y:= \sum_{i=1}^6\mb{Y}_i,
\]
where
\begin{align*}
\begin{aligned}
&\mb Y_1(U):= \int \frac{a}{\sigma} \wt h^S_\xi(p(v)-p(\wt v^{\mb{X}}))d\xi,\\
&\mb Y_2(U):= -\int a p^\prime(\wt v^S) \wt v^S_\xi (v-\wt v^{\mb{X}}) d\xi,\\
&\mb Y_3(U):=\int  a \wt h^S_\xi \left( h-\wt h^{\mb{X}}-\frac{p(v)-p(\wt v^{\mb{X}})}{\sigma}\right)d\xi,\\
&\mb Y_4(U):=-\int a (p'(\wt v^{\mb{X}})-p'(\wt v^S)) \wt v^S_\xi (v-\wt v^{\mb{X}})  d\xi,\\
&\mb Y_5(U):=  -\frac12\int_{\bbr} \!a_\xi \left( h-\wt h^{\mb{X}}-\frac{p(v)-p(\wt v^{\mb{X}})}{\sigma}\right)\left( h-\wt h^{\mb{X}}+\frac{p(v)-p(\wt v^{\mb{X}})}{\sigma}\right)d\xi, \\
&\mb Y_6(U):= -\int  a_\xi Q(v|\wt v^{\mb{X}})d\xi-\int \frac{a_\xi}{2\sigma^2}(p(v)-p(\wt v^{\mb{X}}))^2d\xi.
\end{aligned}
\end{align*}
Notice from \eqref{X(t)} that 
\beq\label{defxy}
\dot{\mb{X}}(t)=-\frac{M}{\delta_S} (\mb{Y}_1+\mb{Y}_2),
\eeq
and so,
\begin{equation}\label{XY}
\dot{\mb{X}}(t)\mb{Y}=-\frac{\delta_S}{M} |\dot{\mb{X}}(t)|^2+\dot{\mb{X}}(t)\sum_{i=3}^6\mb{Y}_i.
\end{equation}

\subsubsection{Leading order estimates}
\begin{lemma}\label{lem-sharp}
There exists $C>0$ such that 
\begin{align*}
\begin{aligned}
&-\frac{\deltas}{2M} |\dot{\mb X}|^2 + \mb B_1+\mb B_2 -\mb G_2 -\frac{3}{4}\mb D \\&\le -C\int |(\wt v^S)_\xi| |p(v)-p(\wt v^{\mb{X}})|^2 d\xi +C\int |a_\xi| |p(v)-p(\wt v^{\mb{X}})|^3 d\xi \\
&\quad+C\int |a_\xi| |(\wt v^R)^{\mb{X}} -v_m | |p(v)-p(\wt v^{\mb{X}})|^2 d\xi.
\end{aligned}
\end{align*}
\end{lemma}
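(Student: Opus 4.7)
The goal is to estimate the combination $\mb B_1+\mb B_2-\mb G_2-\tfrac{3}{4}\mb D$ together with the dissipation boost $-\tfrac{\deltas}{2M}|\dot{\mb X}|^2$, which by \eqref{defxy} equals $-\tfrac{M}{2\deltas}(\mb Y_1+\mb Y_2)^2$. The plan is to reduce every bad term to an explicit quadratic functional of $P:=p(v)-p(\wt v^{\mb X})$ concentrated on the shock layer, apply the sharp Poincar\'e inequality \eqref{poincare}, and then match its mean-value residual against the boost $-\tfrac{M}{2\deltas}(\mb Y_1+\mb Y_2)^2$. The specific constant $M=\tfrac{5(\gamma+1)\s_m^3}{8\gamma p(v_m)}$ is designed so that this matching is exact.

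First I would Taylor-expand the relative quantities using \eqref{p-est1} and \eqref{Q-est11}--\eqref{Q-est1}, which give
\begin{equation*}
p(v|\wt v^{\mb X})=\tfrac{\gamma+1}{2\gamma p(\wt v^{\mb X})}P^2+O(|P|^3), \qquad Q(v|\wt v^{\mb X})=\tfrac{p(\wt v^{\mb X})^{-1/\gamma-1}}{2\gamma}P^2+O(|P|^3).
\end{equation*}
Inserting these into $\mb B_2$ and $\mb G_2$ produces the cubic error term $C\int|a_\xi||P|^3\,d\xi$, after using $|a\wt v^S_\xi|\le C|a_\xi|\deltas/\lam$ which follows from \eqref{d-weight}. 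Next I would freeze the pressure coefficients at $v_m$: on the support of $\wt v^S_\xi$ one has $p(\wt v^S)=p(v_m)+O(\deltas)$ and $\s=\s_m+O(\deltas)$, and these $O(\deltas)$ replacement errors in the $a_\xi$-weighted integrals reduce to $\lam\int|\wt v^S_\xi|P^2\,d\xi$ again by \eqref{d-weight}, hence are absorbed into the target $-C\int|\wt v^S_\xi|P^2\,d\xi$ by the smallness of $\lam$. Replacing $\wt v^{\mb X}$ by $\wt v^S$ inside the frozen coefficients generates exactly the last error term $C\int|a_\xi||(\wt v^R)^{\mb X}-v_m|P^2\,d\xi$, since $\wt v^{\mb X}-\wt v^S=(\wt v^R)^{\mb X}-v_m$. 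At this stage the bad part has been reduced to $\kappa\int_\bbr a_\xi P^2\,d\xi$ for an explicit positive constant $\kappa=\kappa(\gamma,v_m)$ assembled from the three prefactors $\tfrac{1}{2\s_m}$, $\tfrac{\s_m(\gamma+1)}{2\gamma p(v_m)}\cdot\tfrac{\deltas}{\lam}$ and $-\tfrac{\s_m p(v_m)^{-1/\gamma-1}}{2\gamma}$.

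The last step is the sharp Poincar\'e argument. Introducing $y(\xi):=(a(\xi)-1)/\lam\in(0,1)$, which is monotone with $y_\xi=a_\xi/\lam$, a change of variables yields $\int a_\xi P^2\,d\xi=\lam\int_0^1 P^2\,dy$. Applying \eqref{poincare} to $P-\bar P$, with $\bar P:=\int_0^1 P\,dy$, gives
\begin{equation*}
\lam\int_0^1 (P-\bar P)^2\,dy \le \tfrac{\lam}{2}\int_0^1 y(1-y)|P_y|^2\,dy.
\end{equation*}
Using the ODE identity obtained by integrating \eqref{VS} once, namely $\s\wt v^S_\xi/\wt v^S = p(v_m)-p(\wt v^S)+\s^2(v_m-\wt v^S)$, the weight $\lam y(1-y)/y_\xi$ matches $a/(\gamma p(\wt v^S))$ at leading order, so the right-hand side above is bounded by $\tfrac{3}{4}\mb D$ up to $O(\deltas)$ errors once more absorbed into the target. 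What remains uncontrolled on the bad side is the mean-value contribution $\kappa\lam\bar P^2$. The main obstacle, and the reason $M$ takes its explicit numerical value, is the identification of $\lam\bar P$ with $\mb Y_1+\mb Y_2$ at leading order: using $\wt h^S_\xi\approx -\s_m\wt v^S_\xi$ and $a_\xi\approx(\lam/\deltas)\s_m^2\wt v^S_\xi$ in the definitions of $\mb Y_1,\mb Y_2$ from \eqref{ybg-first}, one finds $\mb Y_1+\mb Y_2=c_0\lam\bar P+O(\deltas,\lam^2)$ for an explicit $c_0=c_0(\gamma,v_m)$. The algebra of $\s_m^2=\gamma p(v_m)/v_m$ and $p(v_m)^{-1/\gamma-1}=v_m^{\gamma+1}$ then forces $M=\tfrac{5(\gamma+1)\s_m^3}{8\gamma p(v_m)}$ to make $\tfrac{M}{2\deltas}(c_0\lam\bar P)^2\ge\kappa\lam\bar P^2$ with a strict positive remainder of order $\int|\wt v^S_\xi|P^2\,d\xi$. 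Verifying this sharp cancellation, while keeping every $O(\deltas)$ and $O(\lam)$ remainder strictly dominated by $-C\int|\wt v^S_\xi|P^2\,d\xi$, is the only delicate calculation; all other steps amount to bookkeeping of Taylor remainders.
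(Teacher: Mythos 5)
Your strategy coincides with the paper's: Taylor-expand the relative quantities via \eqref{p-est1} and \eqref{Q-est11}--\eqref{Q-est1}, freeze the coefficients at $v_m$, pass to the shock variable $y\in(0,1)$, identify the weight $y(1-y)$ in $\mb D$ through the once-integrated profile ODE \eqref{hVS}, apply the sharp Poincar\'e inequality of Lemma \ref{lem-poin}, and cancel the mean-value residual against $-\frac{\deltas}{2M}|\dot{\mb X}|^2$; the two error terms in the statement arise exactly where you place them (the cubic remainder and the replacement $\wt v^{\mb X}\to\wt v^S$ in the coefficients).

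The one step that is mis-scaled is precisely the one you flag as delicate. The functionals $\mb Y_1=\int\frac{a}{\s}\wt h^S_\xi\,(p(v)-p(\wt v^{\mb X}))\,d\xi$ and $\mb Y_2=-\int a\,p'(\wt v^S)\wt v^S_\xi(v-\wt v^{\mb X})\,d\xi$ are weighted by $a\,\wt h^S_\xi$ and $a\,\wt v^S_\xi$, not by $a_\xi$; since $\wt h^S_\xi=p(\wt v^S)_\xi/\s$ and $p(\wt v^S)_\xi\,d\xi=-\deltas\,dy$, the change of variables gives $\mb Y_1+\mb Y_2=-\frac{2\deltas}{\s_m^2}\bar P+O\big(\deltas(\lam+\delta_0+\eps_1)\big)\int_0^1|P|\,dy$, so the leading coefficient is proportional to $\deltas$, not to $\lam$. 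Hence $-\frac{\deltas}{2M}|\dot{\mb X}|^2\le-\frac{M\deltas}{\s_m^4}\bar P^2+\dots$, which is of the \emph{same} order $\deltas\bar P^2$ as the Poincar\'e residual $\frac54\deltas\alpha_m\bar P^2$ (with $\alpha_m:=\frac{\gamma+1}{2\gamma\s_m p(v_m)}$), and the cancellation is tight: $M=\frac54\s_m^4\alpha_m$ is exactly the threshold, the surviving margin being the $-\frac{\deltas\alpha_m}{8}\int_0^1P^2\,dy$ left over from choosing $\frac98$ versus $\frac58$ in the Poincar\'e step. With your scaling $\mb Y_1+\mb Y_2\sim\lam\bar P$ the boost would appear to exceed the residual by a factor $\lam^2/\deltas^2$, and the computation that ``forces'' the stated value of $M$ would not close consistently. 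A second, harmless slip: the $\mb B_2$ prefactor should be $\frac{\gamma+1}{2\gamma\s_m p(v_m)}\cdot\frac{\deltas}{\lam}$, with $\s_m$ in the denominator. Once these scalings are corrected, your argument is the paper's.
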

\begin{proof}
We first rewrite the main terms in terms of the new variables $y$ and $w$:
\begin{equation}\label{omega}
w :=p(v)-p(\wt v^{\mb{X}}),
\end{equation}
and
\begin{equation}\label{y-xi}
y:=\frac{p(v_m)-p(\wt v^S(\xi))}{\deltas}.
\end{equation}
Note that
\begin{equation}
\frac{dy}{d\xi} =-\frac{1}{\delta_S}p(\wt v^S)_\xi>0,
\end{equation}
and the change of variable $\xi\in\bbr\mapsto y\in (0,1)$ will be used below.\\
Note also that $a(\xi)=1+\lam y$ and so $a'(\xi)=\lam (dy/d\xi)$.\\
To perform the sharp estimates, we will consider the $O(1)$-constants:
\[
\s_m:=\sqrt{-p'(v_m)},\qquad \alpha_m:=\frac{\gamma+1}{2\gamma\s_m p(v_m)},
\]
which are indeed independent of the small constants $\deltas,\deltar$, since $\frac{v_+}{2}\le v_m\le v_+$. \\
Note that
\beq\label{sm1}
|\sigma-\sigma_m|\le C\delta_S,
\eeq
with together with $\s_m^2 = -p'(v_m)=\gamma p(v_m)^{\frac1\gamma+1}$ implies
\beq\label{sm2}
 |\s_m^2 -|p'(\wt v^S)||\le C\deltas,\quad \left|\frac{1}{\sigma_m^2}-\frac{p(\wt v^S)^{-\frac1\gamma-1}}{\gamma}\right|\le C\delta_S,\quad \left|\frac{1}{\sigma_m^2}-\frac{p(\wt v^{\mb X})^{-\frac1\gamma-1}}{\gamma}\right|\le C\delta_0 .
\eeq

\noindent$\bullet$ {\bf Estimate on $-\frac{\deltas}{2M} |\dot{\mb X}|^2$ :}
First, to estimate the term $-\frac{\deltas}{2M} |\dot{\mb X}|^2$, we will estimate $\mb Y_1, \mb Y_2$ due to \eqref{defxy}.\\
By the change of variable, we have
\[
\mb Y_1=-\frac{\deltas}{\s^2}\int_0^1 a w dy.
\]
Using \eqref{sm1} and $|a-1|\le \lam$, we have
\beq\label{y1}
\left|\mb Y_1 + \frac{\deltas}{\s_m^2}\int_0^1 w dy \right| \le C\deltas (\lam+\delta_0) \int_0^1|w| dy.
\eeq
For 
\[
\mb Y_2 = -\int a p(\wt v^S)_\xi (v-\wt v^{\mb{X}}) d\xi = \deltas\int_0^1 a (v-\wt v^{\mb{X}}) dy,
\] 
we observe that since (by considering $v=p(v)^{-\frac1\gamma}$)
\[
\left|v-\wt v^{\mb{X}}+\frac{p(\wt v^{\mb{X}})^{-\frac{1}{\gamma}-1}}{\gamma} (p(v)-p(\wt v^{\mb{X}}) )\right| \le C|p(v)-p(\wt v^{\mb{X}})|^2,
\]
it follows from \eqref{sm2} and \eqref{smpressure} that
\[
\left| v-\wt v^{\mb{X}} +\frac{1}{\s_m^2}(p(v)-p(\wt v^{\mb{X}})) \right| \le C(\delta_0+\eps_1)|p(v)-p(\wt v^{\mb{X}})|.
\]
This implies
\beq\label{y2}
\left|\mb Y_2 + \frac{\deltas}{\s_m^2}\int_0^1 w dy \right| \le C\deltas (\lam+\delta_0+\eps_1)\int_0^1|w| dy.
\eeq
Therefore, by \eqref{defxy}, \eqref{y1} and \eqref{y2}, we have
\[
\left| \dot{\mb X} -\frac{2M}{\s_m^2} \int_0^1 w dy\right| = \left| \sum_{i=1}^2\frac{M}{\deltas} \left(\mb Y_i + \frac{\deltas}{\s_m^2}\int_0^1 w dy \right) \right| \le C (\lam+\delta_0+\eps_1)\int_0^1|w| dy,
\]
which yields
\[
\left( \left|\frac{2M}{\s_m^2} \int_0^1 w dy\right| - |\dot{\mb X} | \right)^2 \le  C(\lam+\delta_0+\eps_1)^2 \int_0^1|w|^2 dy.
\]
This and the algebraic inequality $\frac{p^2}{2}-q^2 \le (p-q)^2$ for all $p,q\ge 0$ imply
\[
\frac{2M^2}{\s_m^4} \left(\int_0^1 w dy\right)^2 - |\dot{\mb X}|^2 \le  C(\lam+\delta_0+\eps_1)^2 \int_0^1|w|^2 dy.
\]
Thus,
\beq\label{gxest}
-\frac{\deltas}{2M} |\dot{\mb X}|^2 \le -\frac{M\deltas}{\s_m^4} \left(\int_0^1 w dy\right)^2 +C\ds(\lam+\delta_0+\eps_1)^2 \int_0^1|w|^2 dy.
\eeq
\noindent$\bullet$ {\bf Change of variable for $\mb B_1, \mb B_2$ :}
By the change of variable, we have
\[
\mb B_1 =\frac{\lam}{2\s}\int_0^1 w^2 dy,
\]
which together with \eqref{sm1} yields
\beq\label{b1s}
\mb B_1\le \frac{\lam}{2\s_m}\int_0^1 w^2 dy + C\lam\deltas \int_0^1 w^2 dy.
\eeq
For $\mb B_2$, using $(\wt v^S)_\xi =p(\wt v^S)_\xi/p'(\wt v^S)$ and  the change of variable, we have
\[
\mb B_2 = \s \deltas \int_0^1 (1+\lam y)\frac{1}{|p'(\wt v^S)|} p(v|\wt v^{\mb{X}}) dy.
\]
Using \eqref{p-est1} with \eqref{smpressure}, we have
\beq\label{b2s}
\mb B_2 \le \s \deltas(1+\lam) \int_0^1 \left(\frac{1}{|p'(\wt v^S)|}  \left(\frac{\gamma+1}{2\gamma p(\wt v^{\mb{X}})} + C\eps_1\right) |p(v)-p(\wt v^{\mb{X}})|^2 \right) dy,
\eeq
which together with \eqref{sm1}-\eqref{sm2} yields
\[
\mb B_2 \le \deltas  \alpha_m (1+C(\delta_0+\lam+\eps_1)) \int_0^1 w^2 dy.
\]
\noindent$\bullet$ {\bf Change of variable for $\mb G_2$ :}
For $\mb G_2$, we first use \eqref{Q-est11} with \eqref{smpressure} to split it into two parts:
\begin{align}
\begin{aligned} \label{ig2}
\mb G_2 &\ge \sigma \int_\bbr  a_\xi  \frac{p(\wt v^{\mb X})^{-\frac{1}{\gamma}-1}}{2\gamma}|p(v)-p(\wt v^{\mb X})|^2 d\xi -\sigma  \int_\bbr  a_\xi \frac{1+\gamma}{3\gamma^2} p(\wt v^{\mb X})^{-\frac{1}{\gamma}-2}(p(v)-p(\wt v^{\mb X}))^3  d\xi\\
&=\underbrace{ \sigma \int_\bbr  a_\xi  \frac{p(\wt v^S)^{-\frac{1}{\gamma}-1}}{2\gamma}|p(v)-p(\wt v^{\mb X})|^2 d\xi }_{=:\mathcal{G}_2} -\sigma  \int_\bbr  a_\xi \frac{1+\gamma}{3\gamma^2} p(\wt v^{\mb X})^{-\frac{1}{\gamma}-2}(p(v)-p(\wt v^{\mb X}))^3  d\xi
\\
&\quad + \frac{\s}{2\gamma} \int_\bbr  a_\xi  \left(p(\wt v^{\mb X})^{-\frac{1}{\gamma}-1} - p(\wt v^S)^{-\frac{1}{\gamma}-1} \right) |p(v)-p(\wt v^{\mb X})|^2 d\xi.
\end{aligned}
\end{align}
We only do the change of variable for the good term $\mathcal{G}_2$ as follows: by \eqref{sm1}-\eqref{sm2} and the change of variable,
\[
\mathcal{G}_2 \ge  \frac{1}{2\s_m}  (1 -C\deltas)  \int_\bbr  a_\xi |p(v)-p(\wt v^{\mb X})|^2 d\xi = \frac{\lam}{2\s_m}  (1 -C\deltas) \int_0^1 w^2 dy.
\]
This and \eqref{b1s} yield
\beq\label{b1-g2}
\mb B_1 -\mathcal{G}_2 \le C \lam \deltas  \int_0^1 w^2 dy.
\eeq
\noindent$\bullet$ {\bf Change of variable for $\mb D$ :}
First, using $a\ge 1$ and the change of variable, we have
\[
\mb D\ge  \int_\bbr \frac{1}{\gamma p(v)} |\partial_\xi \big(p(v)-p(\wt v^{\mb X})\big)|^2 d\xi = \int_0^1 |\partial_y w|^2 \frac{1}{\gamma p(v)} \Big(\frac{dy}{d\xi}\Big) dy.
\]
Integrating \eqref{hVS} over $(-\infty,\xi]$ yields
\[
(\ln \wt v^S)_\xi = -\s (\wt v^S -v_m) - \frac{p(\wt v^S)-p(v_m)}{\s}.
\]
Since
\[
\deltas  \frac{1}{\gamma p(\wt v^S)} \Big(\frac{dy}{d\xi}\Big) =  \frac{-p(\wt v^S)_\xi}{\gamma p(\wt v^S)} =(\ln \wt v^S)_\xi,
\]
we have
\begin{align*}
\begin{aligned}
\deltas  \frac{1}{\gamma p(\wt v^S)} \Big(\frac{dy}{d\xi}\Big) &=-\s (\wt v^S -v_m) - \frac{p(\wt v^S)-p(v_m)}{\s}\\
&=-\frac{1}{\s}\left( \s^2(\wt v^S -v_m) + (p(\wt v^S)-p(v_m)) \right),
\end{aligned}
\end{align*}
which together with $\s^2=\frac{p(v_m)-p(v_+)}{v_+-v_m}$ yields
\begin{align*}
\begin{aligned}
\deltas  \frac{1}{\gamma p(\wt v^S)} \Big(\frac{dy}{d\xi}\Big)
&=-\frac{1}{\s(v_+ -v_m)}\left( (p(v_m)-p(v_+)) (\wt v^S -v_m) + (v_+-v_m)(p(\wt v^S)-p(v_m)) \right)\\
&=-\frac{1}{\s(v_+ -v_m)}\bigg( (p(\wt v^S)-p(v_+)) (\wt v^S -v_m) +  (\wt v^S -v_m) (p(v_m)-p(\wt v^S)) \\
&\qquad +  (\wt v^S- v_m ) (p(\wt v^S)-p(v_m)) + (v_+- \wt v^S)(p(\wt v^S)-p(v_m)) \bigg) \\
&=-\frac{1}{\s(v_+ -v_m)}\left( (p(\wt v^S)-p(v_+)) (\wt v^S -v_m) + (v_+- \wt v^S)(p(\wt v^S)-p(v_m)) \right).
\end{aligned}
\end{align*}
Since $y=\frac{p(v_m)-p(\wt v^S)}{\deltas}$ and $1-y=\frac{p(\wt v^S)-p(v_+)}{\deltas}$,
\[
\frac{1}{y(1-y)}  \frac{1}{\gamma p(\wt v^S)} \Big(\frac{dy}{d\xi}\Big) =\frac{\deltas}{\s(v_+-v_m)}\left( \frac{v_m-\wt v^S}{p(v_m)-p(\wt v^S)}-\frac{v_+-\wt v^S}{p(v_+)-p(\wt v^S)} \right).
\]
Since the right-hand side above is the same as the one in the proofs of \cite[Appendix B]{KV-2shock} and \cite[Lemma 3.1]{Kang-V-NS17}), we have
\[
\left| \frac{1}{y(1-y)}  \frac{1}{\gamma p(\wt v^S)} \Big(\frac{dy}{d\xi}\Big) -\frac{\deltas p''(v_m)}{2|p'(v_m)|^2 \s_m} \right|\leq C\deltas^2.
\]
In addition, since \eqref{smpressure} yields $C^{-1}\le p(v)\le C$ and
\[
\left|\frac{p(\wt v^S)}{p(v)} -1 \right| \le C |\wt v^S -v| \le  C (|\wt v^S -\wt v| +|\wt v - v| ) \le C(\delta_0 + \eps_1),
\]
we have
\begin{eqnarray*}
\mb D&\ge&  \int_0^1 |\partial_y w|^2\frac{p(\wt v^S)}{p(v)} \frac{1}{\gamma p(\wt v^S)} \Big(\frac{dy}{d\xi}\Big) dy\\
&\ge& (1-C\delta_0 -C\eps_1) \left(\frac{\deltas p''(v_m)}{2|p'(v_m)|^2 \s_m} -C\deltas^2 \right)   \int_0^1y(1-y)  |\partial_y w|^2  \, dy.
\end{eqnarray*}
Since 
\[
\frac{p''(v_m)}{2|p'(v_m)|^2 \s_m} =\frac{\gamma+1}{2\gamma\s_m p(v_m)} =\alpha_m,
\]
we have
\[
\mb D\ge \deltas\alpha_m(1-C(\delta_0+\eps_1))  \int_0^1y(1-y)  |\partial_y w|^2 dy.
\]
\noindent$\bullet$ {\bf Conclusion :}
First, by \eqref{b2s}, \eqref{b1-g2} and the above estimates, we have
\[
\begin{array}{ll}
\di \mb B_1+\mb B_2 -\mathcal{G}_2 -\frac{3}{4}\mb D\\
\di \le  \deltas\alpha_m \left( (1+C(\delta_0+\lam+\eps_1)) \int_0^1 w^2 dy -\frac34(1-C(\delta_0+\eps_1))  \int_0^1y(1-y)  |\partial_y w|^2 dy \right),
 \end{array}
\]
which together with the smallness of $\lam,\delta_0,\eps_1$ yields
\[
\mb B_1+\mb B_2 -\mathcal{G}_2 -\frac{3}{4}\mb D\le \deltas\alpha_m \left( \frac98 \int_0^1 w^2 dy -\frac58  \int_0^1y(1-y)  |\partial_y w|^2 dy \right).
\]
Using Lemma \ref{lem-poin} and the fact that for $\bar w:=\int_0^1 w dy$,
\[
\int_0^1 |w-\bar w|^2 dy = \int_0^1w^2 dy -{\bar w}^2,
\]
we have
\[
 \mb B_1+\mb B_2 -\mathcal{G}_2 -\frac{3}{4}\mb D \le -\frac{\deltas\alpha_m}{8}  \int_0^1 w^2 dy +\frac{5\deltas\alpha_m}{4}  \left(\int_0^1 w dy\right)^2.
\]
Since the specific $O(1)$-constant $M$ satisfies
\beq\label{definitionM}
M=\frac{5}{4}\s_m^4\alpha_m,
\eeq
it holds from \eqref{gxest} and \eqref{ig2} that
\begin{align*}
\begin{aligned}
&-\frac{\deltas}{2M} |\dot{\mb X}|^2 + \mb B_1+\mb B_2 -\mb G_2 -\frac{3}{4}\mb D \\
&\quad\le -\frac{\alpha_m}{16}  \int_0^1 w^2 \deltas dy 
+\sigma  \int_\bbr  a_\xi \frac{1+\gamma}{3\gamma^2} p(\wt v^{\mb X})^{-\frac{1}{\gamma}-2}(p(v)-p(\wt v^{\mb X}))^3  d\xi\\
&\qquad-\frac{\s}{2\gamma} \int_\bbr  a_\xi  \left(p(\wt v^{\mb X})^{-\frac{1}{\gamma}-1} - p(\wt v^S)^{-\frac{1}{\gamma}-1} \right) |p(v)-p(\wt v^{\mb X})|^2 d\xi ,
\end{aligned}
\end{align*}
which implies the desired estimate.
\end{proof}

\subsubsection{Proof of Lemma \ref{lem-zvh}}
First of all, we use \eqref{mainrhs}, \eqref{sbg}, \eqref{XY} to have
\begin{align*}
\begin{aligned}
\frac{d}{dt}\int_{\bbr} a \eta\big(U|\wt U^{\mb X} \big) d\xi &=-\frac{\deltas}{2M} |\dot{\mb X}|^2 + \mb B_1+\mb B_2 -\mb G_2 -\frac{3}{4}\mb D \\
&\quad -\frac{\delta_S}{2M} |\dot{\mb{X}}|^2+\dot{\mb{X}}\sum_{i=3}^6\mb{Y}_i +\sum_{i=3}^5 \mb B_i +\mb S_1 +\mb S_2 - \mb G_1- \mb G^R - \frac14\mb D.
\end{aligned}
\end{align*}
Using Lemma \ref{lem-sharp} and the Young's inequality, we find that there exist $C_1, C>0$ such that
\begin{align*}
\begin{aligned}
\frac{d}{dt}\int_{\bbr} a \eta\big(U|\wt U^{\mb X} \big) d\xi 
&\le - C_1\int |(\wt v^S)_\xi| |p(v)-p(\wt v^{\mb{X}})|^2 d\xi +\underbrace{C\int |a_\xi| |p(v)-p(\wt v^{\mb{X}})|^3 d\xi}_{=:K_1} \\
&\quad +\underbrace{ C\int |a_\xi| |(\wt v^R)^{\mb{X}} -v_m | |p(v)-p(\wt v^{\mb{X}})|^2 d\xi}_{=:K_2} \\
&\quad -\frac{\delta_S}{4M} |\dot{\mb{X}}|^2 +\frac{C}{\deltas}\sum_{i=3}^6|\mb{Y}_i|^2 +\sum_{i=3}^5 \mb B_i +\mb S_1 +\mb S_2 - \mb G_1- \mb G^R - \frac14\mb D.
\end{aligned}
\end{align*}
In what follows, to control the above bad terms, we will use the above good terms $\mb G_1,\mb G^R, \mb D$ and
\beq\label{gs-1}
\mb G^S:=\int |(\wt v^S)_\xi| |p(v)-p(\wt v^{\mb{X}})|^2 d\xi.
\eeq
Note that from \eqref{good1} and \eqref{gs-1}, it is obvious that $\mb G^S=G^S$ with the change of variables $\xi\mapsto\xi+\mb X(t)$.

\noindent$\bullet$ {\bf Estimate on the cubic term $K_1$ :}
For simplicity, we use the notation $w=p(v)-p(\wt v^{\mb X})$ as in \eqref{omega}. 
We first use \eqref{d-weight} and the interpolation inequality to have
\begin{align*}
\begin{aligned}
K_1 &\le C\frac{\lam}{\deltas} \int  \|w\|_{L^\infty(\bbr)}^2 |(\wt v^S)_\xi| |w| d\xi\\
&\le C\frac{\lam}{\deltas} \|w\|_{L^\infty(\bbr)}^2 \sqrt{\int  |(\wt v^S)_\xi| w^2 d\xi} \sqrt{\int  |(\wt v^S)_\xi| d\xi}\\
&\le C\frac{\lam}{\sqrt{\deltas}}  \|w_\xi\|_{L^2(\bbr)} \|w\|_{L^2(\bbr)} \sqrt{\int |(\wt v^S)_\xi| w^2 d\xi}.
\end{aligned}
\end{align*}
Using \eqref{lamsmall}, \eqref{apri-ass} with \eqref{pressure2}, we have
\begin{align*}
\begin{aligned}
K_1 &\le  C\eps_1 \|w_\xi\|_{L^2(\bbr)} \sqrt{\int  |(\wt v^S)_\xi| w^2 d\xi}\\
&\le C\eps_1    \|w_\xi\|_{L^2(\bbr)}^2 +C\eps_1 \int  |(\wt v^S)_\xi| w^2 d\xi \le  \frac{1}{40}(\mb D + C_1\mb G^S).
\end{aligned}
\end{align*}
\noindent$\bullet$ {\bf Estimate on the term $K_2$ :}
Likewise, using  \eqref{d-weight} and the interpolation inequality,
\begin{align*}
\begin{aligned}
K_2 &\le C\frac{\lam}{\deltas} \|w\|_{L^4(\bbr)}^2 \| |(\wt v^S)_\xi| |(\wt v^R)^{\mb{X}} -v_m |\|_{L^2(\bbr)}  \\
&\le C\frac{\lam}{\deltas} \|w_\xi\|_{L^2(\bbr)}^{1/2}  \|w\|_{L^2(\bbr)}^{3/2} \| |(\wt v^S)_\xi| |(\wt v^R)^{\mb{X}} -v_m |\|_{L^2(\bbr)} .
\end{aligned}
\end{align*}
Using \eqref{apri-ass}, Lemma \ref{lemma2.2}, \eqref{lamsmall} and Young's inequality, it holds that
\begin{align*}
\begin{aligned}
K_2 &\le C\eps_1 \|w_\xi\|_{L^2(\bbr)}^{1/2} \frac{\lam}{\deltas} \deltas^{3/2} \deltar e^{-C\deltas t} \le C\eps_1 \|w_\xi\|_{L^2(\bbr)}^{1/2}  \deltas \deltar e^{-C\deltas t} \\
&\le C\eps_1 \|w_\xi\|_{L^2(\bbr)}^2 + C\eps_1 \deltas^{4/3} \deltar^{4/3} e^{-C\deltas t} \le  \frac{1}{40} \mb D +C\eps_1 \deltas^{4/3} \deltar^{4/3} e^{-C\deltas t}.
\end{aligned}
\end{align*}

\noindent$\bullet$ {\bf Estimates on the terms $\mb Y_i$ :}
Since
\[
|\mb Y_3| \le C\frac{\deltas}{\lam} \int |a_\xi| \left| h-\wt h^{\mb{X}}-\frac{p(v)-p(\wt v^{\mb{X}})}{\sigma}\right|d\xi \le C\frac{\deltas}{\sqrt\lam} \sqrt{\mb G_1},
\]
we have
\[
\frac{C}{\deltas}|\mb{Y}_3|^2 \le C\frac{\deltas}{\lam} \mb G_1\le\frac14 \mb G_1.
\]
Using \eqref{rel_Q} and \eqref{Q-est1}, we have
\[
|\mb Y_4|\le C\int | (\wt v^R)^{\mb{X}} -v_m| |\wt v^S_\xi| |v-\wt v^{\mb{X}}| d\xi \le C\deltar \int |\wt v^S_\xi| w d\xi \le C\deltar\sqrt{\deltas} \sqrt{\int |\wt v^S_\xi| w^2 d\xi },
\]
and so
\[
\frac{C}{\deltas}|\mb{Y}_4|^2 \le C\deltar^2 \mb G^S \le \frac{C_1}{40} \mb G^S.
\]

For $\mb Y_5$, we first estimate $h-\wt h^{\mb{X}}$ in terms of $u-\wt u^{\mb{X}}$ and $v-\wt v^{\mb{X}}$ (using the definition of $h$ in \eqref{h} and $\wt h$ in \eqref{huvx}) as follows.
Observe that
\begin{align}
\begin{aligned} \label{newph}
|h-\wt h^{\mb{X}}|&\le |u-\wt u^{\mb{X}}| + |(\ln v)_\xi-(\ln\wt v^S)_\xi|\\
 &\le  |u-\wt u^{\mb{X}}| +C (|(v-\wt v^{\mb{X}})_\xi|+|\wt v^{\mb{X}}_\xi||v-\wt v^S|+|(\wt v^R_\xi)^{\mb{X}}|)\\
&\le |u-\wt u^{\mb{X}}| +C (|(v-\wt v^{\mb{X}})_\xi|+|\wt v^{\mb{X}}_\xi||v-\wt v^{\mb{X}}|+|\wt v^S_\xi||(\wt v^R)^{\mb{X}}-v_m|+|(\wt v^R_\xi)^{\mb{X}}|),
\end{aligned}
\end{align}
which together with the wave interaction estimates in Lemma \ref{lemma2.2} and Lemma \ref{lemma1.2} implies 
\[
\|h-\wt h^{\mb{X}}\|_{L^2(\bbr)}\leq C\Big[\|u-\wt u^{\mb{X}}\|_{L^2(\bbr)}+\|v-\wt v^{\mb{X}}\|_{H^1(\bbr)}+\delta_R\Big].
\]
Then, by using \eqref{apri-ass},
\beq\label{hhe}
\|h-\wt h^{\mb{X}}\|_{L^\infty(0,T;L^2(\bbr))}\leq  C(\varepsilon_1+\delta_R).
\eeq
This together with \eqref{apri-ass} and $\|a_\xi\|_{L^\infty}\le C\lam\deltas$ yields
\[
\begin{array}{ll}
\di |\mb Y_5|\le C|\mb{G}_1|^{\frac12}\|a_\xi\|^{\frac12}_{L^\infty}\big[\|h-\wt h^{\mb{X}}\|_{L^\infty(0,T;L^2(\bbr))}+\|v-\wt v^{\mb{X}}\|_{L^\infty(0,T;L^2(\bbr))}\big]\\[2mm]
\di \qquad\leq  C(\varepsilon_1+\delta_R)(\lambda\delta_S)^{\frac12}\mb{G}_1^{\frac12},
\end{array}
\]
and so
\[
\frac{C}{\deltas}|\mb{Y}_5|^2\le C \lambda(\varepsilon_1+\delta_R)^2\mb{G}_1\le\frac14 \mb G_1.
\]

Using \eqref{Q-est1} with \eqref{smpressure}, we have
\[
\frac{C}{\deltas}|\mb{Y}_6|^2 \le \frac{C}{\deltas} \left(\int  |a_\xi| w^2 d\xi\right)^2
\le \frac{C\lam^2}{\deltas^3} \left(\int |(\wt v^S)_\xi| w^2 d\xi\right)^2.
\]
Thus, by  \eqref{apri-ass} with \eqref{pressure2}, we have
\[
\frac{C}{\deltas}|\mb{Y}_6|^2 \le \frac{C\lam^2}{\deltas} \|w\|_{L^2(\bbr)}^2 \int |(\wt v^S)_\xi| w^2 d\xi \le C\eps_1^2  \int |(\wt v^S)_\xi| w^2 d\xi\le \frac{C_1}{40} \mb G^S.
\]

\noindent$\bullet$ {\bf Estimates on the terms $\mb B_i$ :}
Using the Young's inequality, we have
\[
|\mb B_3(U)| \le \frac{1}{32}\mb D + C \int_\bbr |a_\xi|^2 w^2 d\xi \le \frac{1}{32}\mb D + \lam^2  \int_\bbr |(\wt v^S)_\xi| w^2 d\xi \le \frac{1}{40}(\mb D +C_1\mb G^S).
\]
For $\mb B_4, \mb B_5$, we use the facts that 
\[
|\partial_{\xi} p(\wt v^{\mb X})|\le C( |\wt v^S_\xi|+|(\wt u^R_\xi)^{\mb X}|) \quad \mbox{by Lemma \ref{lemma1.2}},
\]
and 
\[
|p(v)-p(\wt v^{\mb X})|^2 \le C p(v|\wt v^{\mb X})\quad \mbox{by \eqref{rel_p} and \eqref{pressure2}}.
\]
Then,
\[
|\mb B_4(U)| \le  C\lam\deltas  \int_\bbr ( |\wt v^S_\xi|+|(\wt u^R_\xi)^{\mb X}|)  \big(p(v)-p(\wt v^{\mb X})\big)^2 d\xi \le \frac18(C_1\mb G^S+\mb G^R).
\]
In addition, using Young's inequality and $\|(\wt u^R_\xi)^{\mb X}\|_{L^\infty} \le C\deltar $ by Lemma \ref{lemma1.2}, we have
\[
|\mb B_5(U)| \le \frac{1}{40} \mb D + C\delta_0  \int_\bbr ( |\wt v^S_\xi|+|(\wt u^R_\xi)^{\mb X}|)  \big(p(v)-p(\wt v^{\mb X})\big)^2 d\xi \le  \frac{1}{40} \mb D + \frac18(C_1\mb G^S+\mb G^R).
\]

\noindent$\bullet$ {\bf Estimates on the terms $\mb S_i$ :}
We first  compute that (using $\wt v^S,  \wt v^{\mb X}, (\wt v^R)^{\mb X} \in (v_+/2, 2v_+)$, $\wt v^{\mb X}=(\wt v^R)^{\mb X}+\wt v^S -v_m$, and Lemmas \eqref{lemma1.2}-\eqref{lemma1.3})
\begin{align}
\begin{aligned}\label{ccomp}
&| \big(\ln \wt v^S -\ln \wt v^{\mb X}\big)_{\xi\xi}| \\
&=\left|\wt v^S_{\xi\xi}\left(\frac{1}{\wt v^S}-\frac{1}{\wt v^{\mb X}}\right) + \frac{1}{\wt v^{\mb X}}\left(\wt v^S_{\xi\xi} -\wt v^{\mb X}_{\xi\xi} \right) -\frac{1}{(\wt v^S)^2} \left((\wt v^S_\xi)^2 - (\wt v^{\mb X}_\xi)^2 \right) -(\wt v^{\mb X}_\xi)^2 \left(\frac{1}{(\wt v^S)^2}-\frac{1}{(\wt v^{\mb X})^2} \right) \right| \\
&\le C \Big( |\wt v^S_{\xi\xi}| |(\wt v^R)^{\mb X} -v_m| +|(\wt v^R)^{\mb X}_{\xi\xi}| +|(\wt v^R)^{\mb X}_\xi | |\wt v^S_\xi | +|(\wt v^R)^{\mb X}_{\xi}|^2  \\
&\qquad\ +|\wt v^S_{\xi}|^2 \big|   |(\wt v^R)^{\mb X} -v_m|  \Big) \\
&\le C\big( |(\wt v^R)^{\mb X}_{\xi\xi}|+|(\wt v^R)^{\mb X}_{\xi}|^2 +( |\wt v^S_{\xi\xi}|+|\wt v^S_{\xi}|^2) |(\wt v^R)^{\mb X} -v_m| +|(\wt v^R)^{\mb X}_\xi | |\wt v^S_\xi | \big) ,
\end{aligned}
\end{align}
and
\beq\label{pcomp}
| \big(p(\wt v^{\mb X})-p((\wt v^R)^{\mb X})-p(\wt v^S)\big)_\xi | \le C\big( |(\wt v^R)^{\mb X}_\xi| |\wt v^S-v_m| +|\wt v^S_\xi|  |(\wt v^R)^{\mb X} -v_m| \big).
\eeq
Then,
\begin{align*}
\begin{aligned}
&|\mb S_1|+|\mb S_2| \\&\le C \int_\bbr |w| \big( |(\wt v^R)^{\mb X}_{\xi\xi}|+|(\wt v^R)^{\mb X}_{\xi}|^2 \big) d\xi \\
&\quad + C \int_\bbr (|w| + |h-\wt h^{\mb X}| ) \big( |\wt v^S_\xi| |(\wt v^R)^{\mb X} -v_m| +|(\wt v^R)^{\mb X}_\xi| |\wt v^S-v_m|+|(\wt v^R)^{\mb X}_\xi | |\wt v^S_\xi | \big) d\xi \\
&=: J_1 + J_2.
\end{aligned}
\end{align*}
Using the interpolation inequality and \eqref{apri-ass} with Young's inequality,
\begin{align}
\begin{aligned}\label{j1c}
J_1 &\le C \|w\|_{L^\infty} \|(\wt v^R)^{\mb X}_{\xi\xi}\|_{L^1} + C \|w\|_{L^2} \|(\wt v^R)^{\mb X}_{\xi}\|_{L^4}^2 \\
&\le  C\|w\|_{L^2}^{1/2}\|w_\xi\|_{L^2}^{1/2}  \|(\wt v^R)^{\mb X}_{\xi\xi}\|_{L^1} + C \|w\|_{L^2} \|(\wt v^R)^{\mb X}_{\xi}\|_{L^4}^2 \\
&\le  C\sqrt{\eps_1} \sqrt[4]{\mb D} \|(\wt v^R)^{\mb X}_{\xi\xi}\|_{L^1} + C\eps_1 \|(\wt v^R)^{\mb X}_{\xi}\|_{L^4}^2 \\
&\le \frac{1}{40} \mb D + C\eps_1^{2/3} \|(\wt v^R)^{\mb X}_{\xi\xi}\|_{L^1}^{4/3}+ C\eps_1 \|(\wt v^R)^{\mb X}_{\xi}\|_{L^4}^2.
\end{aligned}
\end{align}
For $J_2$, using \eqref{pressure2}, \eqref{hhe} and \eqref{apri-ass},
\[
\|w\|_{L^2}+\|h-\wt h^{\mb X}\|_{L^2}\le C (\varepsilon_1+\delta_R).
\]
Thus, 
\[
J_2 \le C(\eps_1 +\delta_R)\big\| |\wt v^S_\xi| |(\wt v^R)^{\mb X} -v_m|+|(\wt v^R)^{\mb X}_\xi| |\wt v^S-v_m| +|(\wt v^R)^{\mb X}_\xi | |\wt v^S_\xi | \big\|_{L^2}.
\]

\noindent$\bullet$ {\bf Conclusion :} From the above estimates, we have
\begin{align*}
\begin{aligned}
&\frac{d}{dt}\int_{\bbr} a \eta\big(U|\wt U^{\mb X} \big) d\xi 
\le -\frac{\delta_S}{4M} |\dot{\mb{X}}|^2  -\frac12 \mb G_1 -\frac{C_1}{2} \mb G^S - \frac{1}{8}\mb D \\
&\quad +C\eps_1 \deltas^{4/3} \deltar^{4/3} e^{-C\deltas t} + C\eps_1^{2/3} \|(\wt v^R)^{\mb X}_{\xi\xi}\|_{L^1}^{4/3} \\
&\quad + C\eps_1 \|(\wt v^R)^{\mb X}_{\xi}\|_{L^4}^2  +C(\eps_1+\delta_R) \big\| |\wt v^S_\xi| |(\wt v^R)^{\mb X} -v_m| +|(\wt v^R)^{\mb X}_\xi| |\wt v^S-v_m|+|(\wt v^R)^{\mb X}_\xi | |\wt v^S_\xi | \big\|_{L^2}.
\end{aligned}
\end{align*}
Integrating the above inequality over $[0,t]$ for any $t\le T$, we have
\begin{align*}
\begin{aligned}
&\sup_{t\in[0,T]}\int_{\bbr}  \eta\big(U|\wt U^{\mb X} \big) d\xi +\deltas\int_0^t|\dot{\mb{X}}|^2 ds +\int_0^t (\mb G_1+\mb G^S + \mb D ) ds\\
&\quad\le C\int_{\bbr} \eta\big(U_0|\wt U(0,\xi) \big) d\xi  + C\eps_1 \deltar^{4/3} + C\eps_1^{2/3} \int_0^t \|(\wt v^R)^{\mb X}_{\xi\xi}\|_{L^1}^{4/3}ds+C\eps_1\int_0^t \|(\wt v^R)^{\mb X}_{\xi}\|_{L^4}^2 ds\\
&\qquad +C (\eps_1+\delta_R) \int_0^t \big\| |\wt v^S_\xi| |(\wt v^R)^{\mb X} -v_m| +|(\wt v^R)^{\mb X}_\xi| |\wt v^S-v_m|+|(\wt v^R)^{\mb X}_\xi | |\wt v^S_\xi | \big\|_{L^2} ds.
\end{aligned} 
\end{align*}
Notice that by Lemma \ref{lemma1.2},
\[
\|(\wt v^R)_{\xi\xi}\|_{L^1} \le \left\{ \begin{array}{ll}
      \deltar&\quad\mbox{if } 1+t \le \deltar^{-1}\\
      \frac{1}{1+t} &\quad\mbox{if }  1+t \ge \deltar^{-1} , \\
        \end{array} \right.
\]
and
\[
\|(\wt v^R)_{\xi}\|_{L^4} \le \left\{ \begin{array}{ll}
      \deltar&\quad\mbox{if } 1+t \le \deltar^{-1}\\
      \deltar^{1/4}\frac{1}{(1+t)^{3/4}} &\quad\mbox{if }  1+t \ge \deltar^{-1} , \\
        \end{array} \right.
\]
Thus,
\beq\label{v21}
\int_0^\infty \|(\wt v^R)^{\mb X}_{\xi\xi}\|_{L^1}^{4/3}ds \le C\deltar^{1/3} ,\qquad \int_0^\infty \|(\wt v^R)^{\mb X}_{\xi}\|_{L^4}^2 ds \le C\deltar.
\eeq
In addition, since it follows from Lemma \ref{lemma2.2} that
\beq\label{v20}
\big\| |\wt v^S_\xi| |(\wt v^R)^{\mb X} -v_m| +|(\wt v^R)^{\mb X}_\xi| |\wt v^S-v_m|+|(\wt v^R)^{\mb X}_\xi | |\wt v^S_\xi | \big\|_{L^2} \le C\deltar\deltas e^{-C\deltas t},
\eeq
and so,
\beq\label{v22}
 \int_0^\infty \big\| |\wt v^S_\xi| |(\wt v^R)^{\mb X} -v_m| +|(\wt v^R)^{\mb X}_\xi| |\wt v^S-v_m|+|(\wt v^R)^{\mb X}_\xi ||\wt v^S_\xi |\big\|_{L^2} ds \le C\deltar,
\eeq
we have
\begin{align*}
\begin{aligned}
&\sup_{t\in[0,T]}\int_{\bbr}  \eta\big(U|\wt U^{\mb X} \big) d\xi +\deltas\int_0^t|\dot{\mb{X}}|^2 ds +\int_0^t (\mb G_1+\mb G^S + \mb D ) ds\\
&\quad\le C\int_{\bbr} \eta\big(U_0|\wt U(0,\xi) \big) d\xi  + C \deltar^{1/3}.
\end{aligned} 
\end{align*}
This implies the desired estimate \eqref{esthv} together with the new notations \eqref{good1}, where note that
\[
G_1(U)\sim \mb G_1(U),\quad G^S(U)=\mb G^S(U),\quad D(U)\sim \mb D(U).
\]

\section{Proof of Proposition \ref{prop2}}\label{sec-vu}
\setcounter{equation}{0}

In this section, we use the original system \eqref{NS-1} to estimate $\|u-\tilde u\|_{L^\infty(0,T;H^1(\bbr))}$, and then we complete the proof of Proposition \ref{prop2}.

\subsection{Estimates for $\|u-\tilde u\|_{L^2(\bbr)}$}

We first present the zeroth-order energy estimates for the system \eqref{NS-1}.

\begin{lemma}\label{lem-vu0}
Under the hypotheses of Proposition \ref{prop2}, there exists $C>0$ (independent of $\delta_0, \eps_1, T$) such that for all $ t\in (0,T]$,
\begin{align}
\begin{aligned}\label{estu0}
&\|v-\wt v\|_{H^1(\bbr)}^2 +\|u-\wt u\|_{L^2(\bbr)}^2 +\deltas\int_0^t|\dot{\mb{X}}|^2 ds \\
&\qquad+\int_0^t \left( G^S(U) + G^R(U) + D(U) + D_1(U) \right) ds\\
&\quad\le C\left( \|v_0-\wt v(0,\cdot)\|_{H^1(\bbr)}^2 +\|u_0 -\wt u(0,\cdot)\|_{L^2(\bbr)}^2 \right) + C\deltar^{1/3}  ,
\end{aligned}
\end{align}
where $G^S, D$ are as in \eqref{good1}, and 
\begin{align}
\begin{aligned}\label{good2}
& G^R(U):= \int_\bbr  \wt u^R_\xi p(v|\wt v) d\xi, \\
&D_1(U):=  \int_\bbr \big|(u-\wt u)_\xi \big|^2  d\xi.
\end{aligned}
\end{align}
\end{lemma}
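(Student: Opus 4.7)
My plan is to run a weighted $L^2$ relative-entropy estimate directly on the original $(v,u)$-formulation \eqref{NS-1}, with the same weight $a^{-\mb X}$ and shift $\mb X$ as in Section~\ref{sec-acontraction}, and then to close the inequality by adding to it a large multiple of the already-proved $(v,h)$-estimate \eqref{esthv}. The extra piece $\|(v-\wt v)_\xi\|_{L^2}^2$ is not obtained by an energy argument but is recovered algebraically from the definition $h=u-(\ln v)_\xi$.

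\textbf{Step 1 (Differential identity in $(v,u)$).} Differentiating $\int a^{-\mb X}\eta_0(U|\wt U)\,d\xi$, with $\eta_0(U)=\tfrac12 u^2+Q(v)$, using \eqref{NS-1} for $U$ and \eqref{sws-u} for $\wt U$, I get the same structural pattern as in Lemma~\ref{lem-zerovh} but with $h$ replaced by $u$: a weight-transport term $-\dot{\mb X}\int a_\xi^{-\mb X}\eta_0(U|\wt U)$, a linear shift contribution $\dot{\mb X}\int a^{-\mb X}\nabla^2\eta_0(\wt U)(\wt U^S)^{-\mb X}_\xi(U-\wt U)$, the relative-entropy flux $\int a_\xi^{-\mb X}[(u-\wt u)(p(v)-p(\wt v))-\sigma\eta_0(U|\wt U)]$, a rarefaction good term $-\int a^{-\mb X}\wt u^R_\xi p(v|\wt v)=-G^R$, the shock bad term $\sigma\int a^{-\mb X}(\wt v^S)^{-\mb X}_\xi p(v|\wt v)$ (obtained from $(\wt u^S)'=-\sigma(\wt v^S)'$ by \eqref{VS}), the viscous dissipation $-\int a^{-\mb X} v^{-1}|(u-\wt u)_\xi|^2$ plus cross terms, and the interaction errors $\int a^{-\mb X}(u-\wt u)(F_1+F_2)$. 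Completing the square in the flux exactly as in Lemma~\ref{lem-max}, the remaining bad leftovers are $\tfrac{1}{2\sigma}\int a_\xi^{-\mb X}|p(v)-p(\wt v)|^2\lesssim \lam G^S$, the shock term $\lesssim G^S$, and viscous cross terms that I absorb into $\tfrac14 D_1+CG^S+CG^R$ by Young's inequality together with the pointwise bounds $|(\wt v^S)^{-\mb X}_\xi|^2\le C\deltas^2|(\wt v^S)^{-\mb X}_\xi|$ (Lemma~\ref{lemma1.3}) and $|\wt u^R_\xi|^2\le C\deltar|\wt u^R_\xi|$ (Lemma~\ref{lemma1.2}). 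The $F_1,F_2$ errors are handled as at the end of the proof of Lemma~\ref{lem-zvh} via the wave-interaction estimates of Lemma~\ref{lemma2.2}, producing the $C\deltar^{1/3}$ remainder.

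\textbf{Step 2 (Coupling with Lemma~\ref{lem-zvh}).} The only genuinely new bad term, compared to Section~\ref{sec-acontraction}, is the linear-in-$\dot{\mb X}$ one; I split it by Young's inequality as $\dot{\mb X}A\le \tfrac{\deltas}{4M}|\dot{\mb X}|^2+\tfrac{C}{\deltas}|A|^2$, and treat $\tfrac{C}{\deltas}|A|^2$ exactly as $\mb Y_3,\ldots,\mb Y_6$ are treated in the proof of Lemma~\ref{lem-zvh}, so that it is bounded by a small fraction of $G_1+G^S+D$. The $\tfrac{\deltas}{4M}|\dot{\mb X}|^2$ part is paid by adding to the resulting inequality a large universal constant $K$ times \eqref{esthv}, whose left-hand side contains $K\deltas\int_0^t|\dot{\mb X}|^2\,ds$; for $K$ large enough, all leftover pieces are absorbed, producing a clean dissipative inequality.

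\textbf{Step 3 (Recovering $\|(v-\wt v)_\xi\|_{L^2}$ and conclusion).} From $v_\xi=v(u-h)$ and, via \eqref{huvx}, $\wt v_\xi=\wt v^R_\xi+(\wt v^S)^{-\mb X}(\wt u-\wt h)$, I deduce the pointwise identity
\[
(v-\wt v)_\xi=v\bigl[(u-\wt u)-(h-\wt h)\bigr]+\bigl[(v-\wt v)+(\wt v^R-v_m)\bigr]\frac{(\wt v^S)^{-\mb X}_\xi}{(\wt v^S)^{-\mb X}}-\wt v^R_\xi,
\]
from which, using the uniform lower bound on $\wt v^S$, the pointwise bound $\|(\wt v^S)^{-\mb X}_\xi\|_{L^\infty}\le C\deltas^2$ (Lemma~\ref{lemma1.3}), and Lemmas~\ref{lemma1.2}--\ref{lemma2.2},
\[
\|(v-\wt v)_\xi\|_{L^2}^2\le C\|u-\wt u\|_{L^2}^2+C\|h-\wt h\|_{L^2}^2+C(\deltar^2+\deltas^3)\le C\|u-\wt u\|_{L^2}^2+C\|h-\wt h\|_{L^2}^2+C\deltar^{1/3}.
\]
Integrating the combined inequality from Steps~1--2 on $[0,t]$ bounds $\|u-\wt u\|_{L^2}^2+\|v-\wt v\|_{L^2}^2+\deltas\!\int|\dot{\mb X}|^2+\int(G^S+G^R+D+D_1)$ by the initial data plus $C\deltar^{1/3}$, while \eqref{esthv} supplies the pointwise bound on $\|h-\wt h\|_{L^2}^2$; substitution into the previous display yields the desired $H^1$-control on $v-\wt v$, which completes Lemma~\ref{lem-vu0}.

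\textbf{Main obstacle.} The delicate piece is the viscous cross term arising from $u_\xi/v-\wt u_\xi/\wt v=(u-\wt u)_\xi/v-\wt u_\xi(v-\wt v)/(v\wt v)$, which couples the dissipation $D_1$ to $\wt u_\xi=\wt u^R_\xi+(\wt u^S)^{-\mb X}_\xi$: its shock piece can only be absorbed through the sharp pointwise bound $|(\wt u^S)^{-\mb X}_\xi|^2\le C\deltas^2|(\wt u^S)^{-\mb X}_\xi|$ (giving $\lesssim \deltas^2 G^S$), and the rarefaction piece through $|\wt u^R_\xi|^2\le C\deltar|\wt u^R_\xi|$ (giving $\lesssim \deltar G^R$). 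Without these profile-specific refinements the viscous cross term would be of the same order as the dissipation $D_1$ and the scheme would not close; the second subtle point is that the coupling with \eqref{esthv} is strictly one-directional, which requires the flux bad leftover $\tfrac{1}{2\sigma}\int a_\xi^{-\mb X}|p(v)-p(\wt v)|^2$ to be swallowed by $\lam G^S$ alone (not by $G_1$), so that no feedback from the $(v,u)$-estimate into the $(v,h)$-estimate is needed.
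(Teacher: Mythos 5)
Your overall architecture — a relative-entropy estimate in the $(v,u)$ variables, closed by coupling with the already-proved $(v,h)$-estimate \eqref{esthv} and by recovering $\|(v-\wt v)_\xi\|_{L^2}$ algebraically from $h=u-(\ln v)_\xi$ — is exactly the paper's, and your Step 3 identity and the treatment of the viscous cross term $\mathcal{I}_4$ (via $|\wt u^S_\xi|^2\le C\deltas^2|\wt u^S_\xi|$ and $|\wt u^R_\xi|^2\le C\deltar|\wt u^R_\xi|$) match the paper's computations. The one substantive deviation is that you insist on carrying the weight $a^{-\mb X}$ into the $(v,u)$ estimate, and this is where your argument has a genuine gap. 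After completing the square in the flux, the leftover is $\tfrac{1}{2\sigma}\int a_\xi^{-\mb X}|p(v)-p(\wt v)|^2$. Since $|a'|\sim \tfrac{\lam}{\deltas}|\wt v^S_\xi|$ by \eqref{d-weight}, this term is of size $\tfrac{\lam}{\deltas}G^S$, \emph{not} $\lam G^S$ as you claim; because $\lam\gg\deltas$ by \eqref{lamsmall}, it is a divergent multiple of $G^S$ and cannot be absorbed by the dissipation $G^S$ available from \eqref{esthv}, no matter how large you take the coupling constant $K$ (you would need $K\gtrsim\lam/\deltas$, which destroys the bound on the other terms of \eqref{esthv} that you are importing). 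The bound $\lesssim\lam G^S$ only emerges after the exact leading-order cancellation of this term against the good term $\sigma\int a_\xi^{-\mb X}Q(v|\wt v)$ produced by the same flux — this is precisely the step $\mb B_1-\mathcal{G}_2\le C\lam\deltas\int_0^1 w^2\,dy$ in the proof of Lemma \ref{lem-sharp}, which relies on the constant matching $\tfrac{1}{2\sigma}\approx\sigma\cdot\tfrac{p(\wt v^S)^{-1/\gamma-1}}{2\gamma}$ via \eqref{sm1}--\eqref{sm2} and \eqref{Q-est11}. You never invoke this cancellation (indeed you list the quadratic flux term as a standalone "bad leftover" and do not mention the $Q(v|\wt v)$ good term at all), so as written the estimate does not close. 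The same weight also produces the transport term $-\dot{\mb X}\int a_\xi^{-\mb X}\tfrac{|u-\wt u|^2}{2}$, which is not controlled by any of $G_1,G^S,D$ and would require the new good term $\tfrac{\sigma}{2}\int a_\xi^{-\mb X}|u-\wt u-\tfrac{p(v)-p(\wt v)}{\sigma}|^2$ that your scheme does not record.

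The fix is either to carry out the full Lemma \ref{lem-sharp}-type leading-order analysis a second time in the $(v,u)$ variables, or — far more simply, and this is what the paper does — to take $a\equiv 1$ in the $(v,u)$ estimate. With $a\equiv 1$ the flux contribution $\mathcal{I}_1=-\int\partial_\xi G(U;\wt U)\,d\xi$ vanishes identically, so there is no square to complete, no $\int a_\xi|p(v)-p(\wt v)|^2$ leftover, and no weighted transport term; the only bad terms are $\dot{\mb X}\mathcal{Y}$ (split by Young's inequality into $\tfrac{\deltas}{2}|\dot{\mb X}|^2+C\tfrac{\deltas}{\lam}G_1+c_1G^S+C\deltas D$ plus interaction errors), $\mathcal{I}_{21}\le CG^S$, $\mathcal{I}_4$, and $\mathcal{I}_6$, all of which have $O(1)$ (not $O(\lam/\deltas)$) coefficients in front of $G^S$ and are therefore absorbed after multiplying the new inequality by $\tfrac{1}{2\max(1,c_1)}$ and adding \eqref{esthv} — equivalently, your "add $K$ times \eqref{esthv}" normalization. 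The weight is needed only in the $(v,h)$ estimate, where it generates $G_1$; importing it into the $(v,u)$ estimate buys nothing and creates the obstruction described above.
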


\begin{proof}
First of all, as in Section \ref{ssec:ent}, we first rewrite \eqref{NS-1} into the form: 
\beq\label{system-u}
\partial_t U +\partial_\xi A(U)= \partial_\xi\Big(M(U) \partial_\xi\nabla\eta(U) \Big),
\eeq
where 
\[
U:={v \choose u},\quad A(U):={-\s v -u \choose -\s u+p(v)},\quad M(U):={0 \quad 0 \choose 0\quad \frac{1}{v}},
\]
and note that by the entropy $\eta(U):=\frac{u^2}{2}+Q(v)$ of \eqref{NS-1},
\[
\nabla\eta(U)={-p(v)\choose u}.
\]
By the above representation, the system \eqref{sws-u} can be written as
\beq\label{tilueq2}
\partial_t \wt U  +\partial_\xi A(\wt U)= \partial_\xi\Big(M(\wt U) \partial_\xi\nabla\eta(\wt U) \Big) -\dot{\mb{X}} \partial_\xi \big((\wt U^S)^{-\mb{X}} \big)+ \bmat{0}\\ {F_1+F_2}\emat ,
\eeq
where $F_1, F_2$ are as in \eqref{ff1f2}.\\

Then, applying the equality \eqref{genrhs} with $a\equiv1$ to the system \eqref{system-u}, we have
\[
\frac{d}{dt}\int_{\bbr} \eta\big(U(t,\xi)|\wt U(t,\xi) \big) d\xi  =\dot{\mb X} \mathcal{Y}(U)  +\sum_{i=1}^6 \mathcal{I}_i(U),
\]
\begin{align*}
& \mathcal{Y} (U) := \int_\bbr  \nabla^2\eta(\wt U) (\wt U^S)_\xi^{-\mb{X}}  (U-\wt U) d\xi,\\
&\mathcal{I}_1(U):=-\int_\bbr  \partial_\xi G(U;\wt U) d\xi,\\
&\mathcal{I}_2(U):=- \int_\bbr \partial_\xi \nabla\eta(\wt U) A(U|\wt U) d\xi,\\
&\mathcal{I}_3(U):=\int_\bbr \Big( \nabla\eta(U)-\nabla\eta(\wt U)\Big) \partial_\xi \Big(M(U) \partial_\xi \big(\nabla\eta(U)-\nabla\eta(\wt U)\big) \Big)  d\xi, \\
&\mathcal{I}_4(U):=\int_\bbr  \Big( \nabla\eta(U)-\nabla\eta(\wt U)\Big) \partial_\xi \Big(\big(M(U)-M(\wt U)\big) \partial_\xi \nabla\eta(\wt U) \Big)  d\xi, \\
&\mathcal{I}_5(U):=\int_\bbr (\nabla\eta)(U|\wt U)\partial_\xi \Big(M(\wt U) \partial_\xi \nabla\eta(\wt U) \Big)  d\xi,\\
&\mathcal{I}_6(U):=-\int_\bbr \nabla^2\eta(\wt U) (U-\wt U)   \bmat{0}\\ {F_1+F_2}\emat   d\xi.
\end{align*}
It follows from the above system that
\begin{align*}
&\mathcal{Y} =-\int  p^\prime(\wt v) ({\wt v}^S_\xi)^{-\mb{X}}  (v-\wt v) d\xi + \int   ({\wt u}^S_\xi)^{-\mb{X}}  (u-\wt u) d\xi=:\mathcal{Y}_1+\mathcal{Y}_2,\\
&\mathcal{I}_1= -\int_\bbr  \partial_\xi \big((p(v)-p(\wt v)) (u-\wt u)-\s \eta(U|\wt U) \big) d\xi =0,\\
&\mathcal{I}_2=- \int_\bbr \wt u_\xi p(v|\wt v) d\xi =- \underbrace{\int_\bbr  \wt u^R_\xi p(v|\wt v) d\xi}_{=:G^R} \underbrace{- \int_\bbr  ({\wt u}^S_\xi)^{-\mb{X}} p(v|\wt v) d\xi}_{=:\mathcal{I}_{21}} ,\\
&\mathcal{I}_3=\int_\bbr (u-\wt u)  \Big( \frac{1}{v} (u-\wt u)_\xi \Big)_\xi  d\xi = - \underbrace{ \int_\bbr \frac{1}{v} \big|(u-\wt u)_\xi \big|^2  d\xi}_{=: \mb D_1}, \\
&\mathcal{I}_4=\int_\bbr (u-\wt u)  \left(\left( \frac{1}{v}-\frac{1}{\wt v}\right) \wt u_\xi \right)_\xi  d\xi,\\
&\mathcal{I}_6=-\int_\bbr (u-\wt u)  \left(\left(\frac{ ({\wt u}^S_\xi)^{-\mb{X}} }{ ({\wt v}^S)^{-\mb{X}}} - \frac{ \wt u_\xi }{\wt v}  \right)_\xi +\big(p(\wt v)-p(\wt v^R)-p((\wt v^S)^{-\mb X})\big)_\xi \right)  d\xi.
\end{align*}
In addition, since $(\nabla\eta)(U|\wt U)={-p(v|\wt v) \choose 0}$, we have $\mathcal{I}_5=0$. \\
Since \eqref{rel_p} and \eqref{p-est1} yields
\[
|\mathcal{Y}_1| \le \sqrt{ \int |({\wt v}^S_\xi)^{-\mb{X}}| d\xi} \sqrt{\int  |({\wt v}^S_\xi)^{-\mb{X}}| |v-\wt v|^2 d\xi } \le \sqrt{\deltas} \sqrt{G^S},
\]
we have
\[
|\dot{\mb X}| |\mathcal{Y}_1 | \le \frac{\deltas}{4} |\dot{\mb X}|^2 + C G^S.
\]

To control $\mathcal{Y}_2$, we will use the follows estimate: as done in \eqref{newph},
\[
|u-\wt u| \le |h-\wt h| +C(|(v-\wt v)_\xi| +|\wt v_\xi||v-\wt v|+|(\wt v_\xi^S)^{-\mb{X}}||\wt v^R-v_m|+|\wt v^R_\xi|).
\]
In addition, using the fact that 
\[
(p(v)-p(\wt v))_\xi = p'(v)(v-\wt v)_\xi + \wt v_\xi (p'(v)-p'(\wt v)),
\]
and so,
\[
|(v-\wt v)_\xi | \le C |(p(v)-p(\wt v))_\xi | +C|\wt v_\xi| |v-\wt v|,
\]
we have
\begin{align*}
|\mathcal{Y}_2| &\le C \int  |({\wt v}^S_\xi)^{-\mb{X}}| \Big(\Big| h-\wt h-\frac{p(v)-p(\wt v)}{\sigma}\Big|+ |p(v)-p(\wt v)|\\
&\quad+ |(p(v)-p(\wt v))_\xi| +|\wt v_\xi||v-\wt v|+|(\wt v_\xi^S)^{-\mb{X}}||\wt v^R-v_m|+|\wt v^R_\xi|\Big) d\xi .
\end{align*} 
Then, using Lemma \ref{lemma2.2} to have
\begin{align*}
|\mathcal{Y}_2| &\le C \int  |({\wt v}^S_\xi)^{-\mb{X}}| \Big(\Big| h-\wt h-\frac{p(v)-p(\wt v)}{\sigma}\Big|+ |p(v)-p(\wt v)|\\
&\quad+ |(p(v)-p(\wt v))_\xi| +|\wt v_\xi||v-\wt v|+|(\wt v_\xi^S)^{-\mb{X}}||\wt v^R-v_m|+|\wt v^R_\xi|\Big) d\xi \\
&\le C\big(\frac{\deltas}{\sqrt\lam} \sqrt{ G_1}+ \sqrt{\deltas} \sqrt{ G^S}+\deltas\sqrt{ D}+\delta_S\delta_R e^{-C\delta_S t}\big).
\end{align*} 
Thus, 
\[
|\dot{\mb X}| |\mathcal{Y}_2 | \le \frac{\deltas}{4} |\dot{\mb X}|^2 + C\frac{\deltas}{\lam}  G_1 + C G^S + C\deltas D+C \delta_S\delta_R^2 e^{-C\delta_S t}.
\]
For $\mathcal{I}_2$, note first that $G^R\ge 0$ by $ \wt u^R_\xi >0$. 
Using Lemma \ref{lem-useful},
\[
|\mathcal{I}_{21}|\le C G^S.
\]
We will use the good terms $G^R$ and $\mb D_1$ to control $\mathcal{I}_4, \mathcal{I}_6$.\\
Using $ |\wt u^R_\xi|\le C\deltar,  |({\wt u}^S_\xi)^{-\mb{X}} |\le \deltas$ and Young's inequality, we have
\[
|\mathcal{I}_4|\le \int_\bbr |(u-\wt u)_\xi| \left|v-\wt v\right| \big(  |\wt u^R_\xi| + |({\wt u}^S_\xi)^{-\mb{X}} | \big)  d\xi \le \frac{1}{4} \mb D_1 + C\deltar G^R + C\deltas G^S.
\]
For $\mathcal{I}_6$, using \eqref{pcomp} and (as done in \eqref{ccomp})
\begin{align}
\begin{aligned} \label{f_3}
& \left|\left(\frac{ ({\wt u}^S_\xi)^{-\mb{X}} }{ ({\wt v}^S)^{-\mb{X}}}\right)_\xi  - \left(\frac{ \wt u_\xi }{\wt v}  \right)_\xi \right| \\
&\quad  \le C\bigg( |(\wt u^R)_{\xi\xi}|+|(\wt u^R)_{\xi}| |(\wt v^R)_{\xi}|  +( |(\wt u^S)^{-\mb{X}}_{\xi\xi}|+|(\wt u^S)^{-\mb{X}}_{\xi}||(\wt v^S)^{-\mb{X}}_{\xi}|) |\wt v^R-v_m| \\
&\quad\qquad +|(\wt u^R)_\xi | |(\wt v^S)^{-\mb{X}}_\xi | +|(\wt v^R)_\xi | |(\wt u^S)^{-\mb{X}}_\xi | \bigg)\\
&\quad  \le C\big( |(\wt u^R)_{\xi\xi}|+|(\wt u^R)_{\xi}|^2 +( |(\wt v^S)^{-\mb{X}}_{\xi\xi}|+|(\wt v^S)^{-\mb{X}}_{\xi}|^2) |\wt v^R-v_m| +|(\wt v^R)_\xi | |(\wt v^S)^{-\mb{X}}_\xi | \big) ,
\end{aligned}
\end{align}
we have
\begin{align*}
\begin{aligned}
\mathcal{I}_6 &\le C \int_\bbr |u-\wt u| \big( |(\wt u^R)_{\xi\xi}|+|(\wt u^R)_{\xi}|^2 \big) d\xi \\
&\quad + C \int_\bbr |u-\wt u| \big( |(\wt v^S)^{-\mb{X}}_{\xi}| |\wt v^R-v_m| +|(\wt v^R)_\xi| |(\wt v^S)^{-\mb X}-v_m|+ |(\wt v^R)_\xi | |(\wt v^S)^{-\mb{X}}_\xi | \big) d\xi \\
&=: Q_1 + Q_2.
\end{aligned}
\end{align*}
Using the same estimates as in \eqref{j1c} with \eqref{apri-ass}, we have
\begin{align*}
Q_1 &\le  C\|u-\wt u\|_{L^2}^{1/2}\|(u-\wt u)_\xi\|_{L^2}^{1/2}  \|(\wt u^R)_{\xi\xi}\|_{L^1} + C \|u-\wt u\|_{L^2} \|(\wt u^R)_{\xi}\|_{L^4}^2 \\
&\le  C\sqrt{\eps_1} \sqrt[4]{\mb D_1} \|(\wt u^R)_{\xi\xi}\|_{L^1} + C\eps_1 \|(\wt u^R)_{\xi}\|_{L^4}^2 \\
&\le \frac{1}{4} \mb D_1 + C\eps_1^{2/3} \|(\wt u^R)_{\xi\xi}\|_{L^1}^{4/3}+ C\eps_1 \|(\wt u^R)_{\xi}\|_{L^4}^2.
\end{align*}
Using \eqref{apri-ass}, we have
\[
Q_2 \le C\eps_1 \big\| |(\wt v^S)^{-\mb{X}}_{\xi}| |\wt v^R-v_m| +|(\wt v^R)_\xi| |(\wt v^S)^{-\mb X}-v_m|+ |(\wt v^R)_\xi | |(\wt v^S)^{-\mb{X}}_\xi |  \big\|_{L^2}.
\]
Therefore, from the above estimates, we find that for some constant $c_1>0$,
\begin{align*}
&\frac{d}{dt}\int_{\bbr} \eta\big(U(t,\xi)|\wt U(t,\xi) \big) d\xi  + \frac{1}{2} G^R + \frac{1}{2} \mb D_1\\
&\quad\le \frac{\deltas}{2} |\dot{\mb X}|^2 +C\frac{\deltas}{\lam}  G_1 + c_1  G^S + C\deltas D + C\eps_1^{2/3} \|(\wt v^R)_{\xi\xi}\|_{L^1}^{4/3}+ C\eps_1 \|(\wt v^R)_{\xi}\|_{L^4}^2\\
&\qquad  +C\eps_1\big\| |(\wt v^S)^{-\mb{X}}_{\xi}| |\wt v^R-v_m| +|(\wt v^R)_\xi| |(\wt v^S)^{-\mb X}-v_m|+ |(\wt v^R)_\xi | |(\wt v^S)^{-\mb{X}}_\xi |  \big\|_{L^2}
\\
&\qquad  +C \delta_S\delta_R^2 e^{-C\delta_S t}.
\end{align*}
Integrating the above inequality over $[0,t]$ for any $t\le T$, and using \eqref{v21}-\eqref{v22}, we have
\begin{align}
\begin{aligned}\label{f00}
&\int_{\bbr} \left(\frac{|u-\wt u|^2}{2} +Q(v|\wt v)\right) d\xi +\frac{1}{2} \int_0^t \left( G^R(U)+\mb D_1(U) \right) ds\\
&\quad\le \int_{\bbr} \left(\frac{|u_0-\wt u(0,\xi)|^2}{2} +Q(v_0|\wt v(0,\xi))\right)  d\xi    \\
& \quad\quad + \int_0^t \left( \frac{\deltas}{2} |\dot{\mb X}|^2+C\frac{\deltas}{\lam}  G_1 + c_1  G^S + C\deltas D \right) ds + C \deltar^{1/3} .
\end{aligned}
\end{align}
Therefore, multiplying \eqref{f00} by the constant $\frac{1}{2\max(1,c_1)}$, and then adding the result to \eqref{esthv}, together with the smallness of $\deltas/\lam, \deltas, \eps_1$, we have
\begin{align}\label{5e}
&\|v-\wt v\|_{L^2(\bbr)}^2+ \|h-\wt h\|_{L^2(\bbr)}^2 +\|u-\wt u\|_{L^2(\bbr)}^2 +\deltas\int_0^t|\dot{\mb{X}}|^2 ds \nonumber\\
&+\int_0^t \left( G^R+ G^S+ D+\mb D_1\right) ds\\
&\le C\big(\|v_0-\wt v(0,\cdot)\|_{L^2(\bbr)}^2+ \|(h-\wt h)(0,\cdot)\|_{L^2(\bbr)}^2 +\|u_0-\wt u(0,\cdot)\|_{L^2(\bbr)}^2 \big)  + C\deltar^{1/3},\nonumber
\end{align}
where we have used that (by Lemma \ref{lem-useful} and \eqref{smpressure})
\[
C^{-1}\big|v-\wt v \big|^2 \le Q(v|\wt v) \le C \big|v-\wt v \big|^2.
\]
Finally, to complete the proof, we will show that
\beq\label{5ee}
\|(v-\wt v)_\xi\|_{L^2(\bbr)}^2\leq C\Big[\|h-\wt h\|_{L^2(\bbr)}^2 +\|u-\wt u\|_{L^2(\bbr)}^2+\|v-\wt v\|_{L^2(\bbr)}^2+ \delta_R^2\Big],
\eeq
and 
\beq\label{5eee}
 \|(h-\wt h)(0,\cdot)\|_{L^2(\bbr)}^2 \le  C\Big[ \|v_0-\wt v(0,\cdot)\|_{H^1(\bbr)}^2 +\|u_0-\wt u(0,\cdot)\|_{L^2(\bbr)}^2+\delta_R^2 \Big].
\eeq
Using the definition of $h$ in \eqref{h} and $\wt h$ in \eqref{huvx}, we observe that
\[
(u-\wt u) -(h-\wt h) = \big(\ln v - \ln (\wt v^S)^{-\mb X}\big)_\xi = \frac{\big(v-(\wt v^S)^{-\mb X}\big)_\xi}{v} + \frac{(\wt v^S)^{-\mb X}_\xi\big( (\wt v^S)^{-\mb X} -v \big)}{v(\wt v^S)^{-\mb X}},
\]
which yields
\begin{align*}
(v-\wt v)_\xi &= \big(v-(\wt v^S)^{-\mb X}\big)_\xi -\big(\wt v-(\wt v^S)^{-\mb X}\big)_\xi \\
&= v(u-\wt u) -v(h-\wt h)  +\frac{(\wt v^S)^{-\mb X}_\xi\big((v-\wt v)+(\wt v^R-v_m) \big)}{(\wt v^S)^{-\mb X}} -\wt v^R_\xi.
\end{align*}
This with Lemma \ref{lemma1.2} and Lemma \ref{lemma2.2} implies \eqref{5ee}.\\
As in \eqref{newph}, we have
\[
 \|(h-\wt h)(0,\cdot)\|_{L^2(\bbr)}^2 \le  C\Big[ \|v_0-\wt v(0,\cdot)\|_{H^1(\bbr)}^2 +\|u_0-\wt u(0,\cdot)\|_{L^2(\bbr)}^2+\delta_R^2\|\wt v^S_\xi\|_{L^2(\bbr)}^2 +\|\wt v^R_\xi(0)\|_{L^2(\bbr)}^2  \Big],
\]
which together with Lemmas \ref{lemma1.3} and \ref{lemma1.2} implies \eqref{5eee}.\\
Hence, the combination of \eqref{5e},\eqref{5ee} and \eqref{5eee} implies the desired estimate.

\end{proof}

\subsection{Estimates for $\|\partial_\xi (u-\tilde u)\|_{L^2(\bbr)}$}
We here complete the proof of Proposition \ref{prop2}, by using the following lemma together with the following two estimates (by using Lemma \ref{lem-useful}) :
\begin{align*}
\begin{aligned}
&\mathcal{G}^S(U)=\int_\bbr |(\wt v^S)_\xi^{-\mb X} | |v-\wt v|^2 d\xi \le C G^S(U),\\
& \mathcal{G}^R(U)=\int_\bbr  | \wt u^R_\xi| \big|v-\wt v \big|^2 d\xi \le C G^R(U).
\end{aligned}
\end{align*}

\begin{lemma}\label{lem-u1}
Under the hypotheses of Proposition \ref{prop2}, there exist $C_1, C>0$ (independent of $\delta_0, \eps_1, T$) such that for all $ t\in (0,T]$,
\begin{align*}
\begin{aligned}
&\|v-\wt v\|_{H^1(\bbr)}^2 +\|u-\wt u\|_{H^1(\bbr)}^2 +\deltas\int_0^t|\dot{\mb{X}}|^2 ds \\
&+\int_0^t \left( G^S(U) + G^R(U) + D(U) + D_1(U) + D_2(U) \right) ds\\
&\le C\left( \|v_0-\wt v(0,\cdot)\|_{H^1(\bbr)}^2 +\|u_0 -\wt u(0,\cdot)\|_{H^1(\bbr)}^2 \right) + C\deltar^{1/3} ,
\end{aligned}
\end{align*}
where $G^S, D$ are as in \eqref{good1}, and $G^R, D_1$ are as in \eqref{good2}, and
\[
D_2(U):=  \int_\bbr \big|(u-\wt u)_{\xi\xi} \big|^2  d\xi.
\]
\end{lemma}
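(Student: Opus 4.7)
The plan is to derive a first-order energy identity for $\psi := u-\tilde u$ that extracts the dissipation $D_2(U) = \|\psi_{\xi\xi}\|_{L^2}^2$, and then absorb all cross terms into the quantities already controlled by Lemma \ref{lem-vu0}. Subtracting the momentum equations of \eqref{NS-1} and \eqref{sws-u} yields
\[
\psi_t - \sigma \psi_\xi - \dot{\mathbf X}(t)(\tilde u^S)^{-\mathbf X}_\xi + (p(v)-p(\tilde v))_\xi = \Big(\frac{u_\xi}{v} - \frac{\tilde u_\xi}{\tilde v}\Big)_\xi - F_1 - F_2,
\]
with $F_1, F_2$ as in \eqref{ff1f2}. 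Multiplying by $-\psi_{\xi\xi}$, integrating over $\mathbb{R}$, noting that the transport term vanishes and that $-\int \psi_t \psi_{\xi\xi}\,d\xi = \tfrac{d}{dt}\int \psi_\xi^2/2\, d\xi$ after integration by parts, the leading viscous term produces $-\int \psi_{\xi\xi}^2/v\, d\xi$, i.e. a negative multiple of $D_2(U)$ thanks to the uniform lower and upper bounds on $v$ implied by \eqref{apri-ass} and Sobolev embedding.

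All remaining terms on the right-hand side I would handle by Young's inequality, reserving a small fraction of $D_2$ for absorption. The pressure-gradient term satisfies $|\int (p(v)-p(\tilde v))_\xi \psi_{\xi\xi}\,d\xi| \le \tfrac18 D_2 + C\, D(U)$. The shift term is bounded by $\tfrac18 D_2 + C\,\|(\tilde u^S)_\xi\|_{L^2}^2 |\dot{\mathbf X}|^2 \le \tfrac18 D_2 + C\delta_S^3 |\dot{\mathbf X}|^2$, and since $\delta_S\ll 1$ the latter is absorbed into a small multiple of the $\delta_S |\dot{\mathbf X}|^2$ already bounded in Lemma \ref{lem-vu0}. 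The viscous remainder obtained after extracting the principal part $\psi_{\xi\xi}/v$ from $(u_\xi/v - \tilde u_\xi/\tilde v)_\xi$ consists of products of $\phi, \phi_\xi, \psi_\xi$ with the wave derivatives $\tilde v_\xi, \tilde u_\xi, \tilde u_{\xi\xi}$; each such product $\times \psi_{\xi\xi}$ splits, via Young, into $\tfrac18 D_2 + C(G^S(U) + G^R(U) + D(U) + D_1(U))$ using the pointwise bounds of Lemmas \ref{lemma1.2}--\ref{lemma1.3} and \eqref{apri-ass}. Finally, $\int \psi_{\xi\xi}(F_1 + F_2)\, d\xi \le \tfrac18 D_2 + C(\|F_1\|_{L^2}^2 + \|F_2\|_{L^2}^2)$, whose time integral is $O(\delta_R^{1/3})$ by the wave interaction estimates of Lemma \ref{lemma2.2} and the rarefaction decay of Lemma \ref{lemma1.2}, as already exploited through \eqref{v21}--\eqref{v22} in the proof of Lemma \ref{lem-zvh}.

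Integrating in time over $[0,t]$, absorbing the four $\tfrac18 D_2$ contributions into the principal $-D_2$ term on the left, and then adding a sufficiently large multiple of the estimate from Lemma \ref{lem-vu0} closes the $H^1$ bound for $u-\tilde u$ and simultaneously provides the integrated $D_2$ dissipation. The initial datum is controlled via $\|\psi_\xi(0,\cdot)\|_{L^2}^2 \le \|u_0-\tilde u(0,\cdot)\|_{H^1}^2$, which is precisely the form of the right-hand side of the stated inequality. The main technical obstacle is controlling the nonlinear cross-term $\int |\psi_\xi|^2 |\tilde v_\xi|\,d\xi$-type contributions produced by the viscous remainder: these I would tame via the Gagliardo--Nirenberg interpolation $\|\psi_\xi\|_{L^\infty}^2 \le \|\psi_\xi\|_{L^2}\|\psi_{\xi\xi}\|_{L^2}$ together with the a priori smallness $\|\psi_\xi\|_{L^\infty_t L^2_\xi} \le C\epsilon_1$ from \eqref{apri-ass}, which yields a factor $\sqrt{\epsilon_1}$ in front of $D_2$ that is comfortably absorbable for $\epsilon_1$ small.
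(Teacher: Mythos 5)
Your proposal follows essentially the same route as the paper's proof: subtract the momentum equations, test with $-\psi_{\xi\xi}$ to extract $\int \psi_{\xi\xi}^2/v\,d\xi$, absorb the shift, pressure-gradient, viscous-remainder and $F_1+F_2$ contributions via Young's inequality (with the same Gagliardo--Nirenberg interpolation for the $\int (1/v)_\xi\,\psi_\xi\psi_{\xi\xi}\,d\xi$-type term), and close by combining with the zeroth-order estimate of Lemma \ref{lem-vu0} to absorb the resulting $O(1)$ multiple of $D(U)$. The decomposition, the absorption constants, and the treatment of the error terms all match the paper's argument, so the proposal is correct.
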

\begin{proof}
For notational simplicity, we set $\psi:=u-\wt u$. Then, it follows from the second equations of \eqref{NS-1} and \eqref{sws-u} that
\[
\psi_t-\s \psi_\xi -\dot{\mb{X}}(\wt u^S)^{-\mb{X}}_\xi+(p(v)-p(\wt v))_\xi=\left(\frac{u_\xi}{v} - \frac{\wt u_\xi}{\wt v}\right)_\xi -F_1-F_2.
\]
Multiplying the above equation by $-\psi_{\xi\xi}$ and integrating the result w.r.t. $\xi$, we have
\begin{align*}
&\frac{d}{dt}\int_{\bbr} \frac{|\psi_\xi|^2}{2} d\xi +\s \underbrace{\int_\bbr \left(\frac{|\psi_\xi|^2}{2} \right)_\xi d\xi}_{=0} \\
&\quad=    -\dot{\mb{X}} \int_\bbr (\wt u^S)^{-\mb{X}}_\xi \psi_{\xi\xi} d\xi + \int_\bbr (p(v)-p(\wt v))_\xi \psi_{\xi\xi} d\xi   \\
&\qquad - \int_\bbr \left(\frac{u_\xi}{v}- \frac{\wt u_\xi}{\wt v}\right)_\xi  \psi_{\xi\xi} d\xi + \int_\bbr (F_1+F_2) \psi_{\xi\xi} d\xi \\
&\quad=: J_1 + J_2 +J_3 +J_4. 
\end{align*}
First, we get a good term $$\mb D_2:= \int_\bbr \frac{1}{v} |\psi_{\xi\xi}|^2 d\xi$$  from $J_3$ as follows:
\begin{align*}
J_3 &= - \int_\bbr \frac{1}{v} |\psi_{\xi\xi}|^2 d\xi - \int_\bbr \left(\frac{1}{v}\right)_\xi \psi_\xi \psi_{\xi\xi} d\xi  - \int_\bbr \wt u_{\xi\xi} \left(\frac{1}{v} - \frac{1}{\wt v}\right)  \psi_{\xi\xi} d\xi  \\
&\quad - \int_\bbr \wt u_\xi \left(\frac{1}{v}- \frac{1}{\wt v}\right)_\xi  \psi_{\xi\xi} d\xi \\
&=: - \mb D_2 + J_{31}+ J_{32}+ J_{33}.
\end{align*}
We use the good terms $\mb D_2, D, D_2, G^S$ and $G^R$ to control the remaining terms as follows.\\
Using Young's inequality,
\[
|J_1| \le |\dot{\mb{X}}| \deltas^2 \int_\bbr |\psi_{\xi\xi}| d\xi \le \frac{\deltas}{2}|\dot{\mb{X}}|^2 + C\deltas^3 \mb D_2 \le \frac{\deltas}{2}|\dot{\mb{X}}|^2 +\frac18 \mb D_2,
\]
\[
|J_2|\le \frac18 \mb D_2 + C D.
\]
Using $\left(\frac{1}{v}\right)_\xi \le C|v_\xi| \le C(|(v-\wt v)_\xi| + |\wt v_\xi|)$, and the interpolation inequality and \eqref{apri-ass}, we have
\begin{align*}
|J_{31}| &\le \|(v-\wt v)_\xi\|_{L^2} \|\psi_\xi\|_{L^\infty}  \|\psi_{\xi\xi}\|_{L^2}  + \|\wt v_\xi\|_{L^\infty}   \|\psi_{\xi}\|_{L^2} \|\psi_{\xi\xi}\|_{L^2} \\
&\le C\eps_1 \|\psi_\xi\|_{L^2}^{1/2} \|\psi_{\xi\xi}\|_{L^2}^{1/2}  \|\psi_{\xi\xi}\|_{L^2}  + C(\deltas+\deltar) \|\psi_{\xi}\|_{L^2} \|\psi_{\xi\xi}\|_{L^2} \\
&\le C(\eps_1+\deltas+\deltar) \big( \|\psi_{\xi}\|_{L^2}^2 + \|\psi_{\xi\xi}\|_{L^2}^2 \big) \le \frac18 \mb D_2 + C(\eps_1+\deltas+\deltar) D_1.
\end{align*} 
Using $|(\wt u^R_{\xi\xi})| \leq C |(\wt u^R_{\xi})| $ (by Lemma \ref{lemma1.2}),
\[
|J_{32}| \le C\int_\bbr (|(\wt u^S_{\xi})| + |(\wt u^R_{\xi})| ) |v-\wt v| |\psi_{\xi\xi}| d\xi \le \frac18 \mb D_2 + C\deltas G^S + C\deltar G^R, 
\]
\begin{align*}
|J_{33}| &\le C\int_\bbr (|(\wt u^S_{\xi})| + |(\wt u^R_{\xi})| ) \big( |v-\wt v| +|(v-\wt v)_\xi| \big) |\psi_{\xi\xi}| d\xi \\
&\le  \frac18 \mb D_2 + C(\deltas + \deltar) ( G^S + G^R + D).
\end{align*} 
Using \eqref{f_3},
\begin{align*}
|J_4| & \le C\|\psi_{\xi\xi}\|_{L^2} \big\| |(\wt u^R)_{\xi\xi}|+|(\wt u^R)_{\xi}|^2 +( |(\wt v^S)^{-\mb{X}}_{\xi\xi}|+|(\wt v^S)^{-\mb{X}}_{\xi}|^2) |\wt v^R-v_m| +|(\wt v^R)_\xi | |(\wt v^S)^{-\mb{X}}_\xi | \big\|_{L^2} \\
&\le \frac18 \mb D_2 + C\|(\wt u^R)_{\xi\xi}\|_{L^2}^2 + C\|(\wt u^R)_{\xi}\|_{L^4}^4 +C \| |(\wt v^S)^{-\mb{X}}_{\xi}| |\wt v^R-v_m| +C|(\wt v^R)_\xi | |(\wt v^S)^{-\mb{X}}_\xi | \|_{L^2}^2.
\end{align*}
Therefore, we find that for some $c_2>0$,
\begin{align*}
\frac{d}{dt}\int_{\bbr} \frac{|\psi_\xi|^2}{2} d\xi &=  -\frac14 \mb D_2 + \frac{\deltas}{2}|\dot{\mb{X}}|^2 + c_2 D + C(\eps_1 + \deltas + \deltar) ( G^S + G^R + D_1) \\
&\quad +C \|(\wt u^R)_{\xi\xi}\|_{L^2}^2 +C \|(\wt u^R)_{\xi}\|_{L^4}^4 + C\| |(\wt v^S)^{-\mb{X}}_{\xi}| |\wt v^R-v_m| +|(\wt v^R)_\xi | |(\wt v^S)^{-\mb{X}}_\xi | \|_{L^2}^2.
\end{align*}
Integrating the above estimate over $[0,t]$ for any $t\le T$, and using \eqref{v20} and the fact that (by Lemma \ref{lemma1.2}) 
\[
\int_0^\infty \|(\wt u^R)_{\xi\xi}\|_{L^2}^2 ds \le C\deltar,\qquad \int_0^\infty \|(\wt u^R)_{\xi}\|_{L^4}^4 ds \le C\deltar^3,
\]
we have
\begin{align*}
\int_{\bbr} \frac{|(u-\wt u)_\xi|^2}{2} d\xi &\le \int_{\bbr} \frac{|(u_0-\wt u(0,\xi))_\xi|^2}{2} d\xi+\int_0^t\big[ -\frac14 \mb D_2 + \frac{\deltas}{2}|\dot{\mb{X}}|^2 \\
&\quad + c_2 D + C(\eps_1 + \deltas + \deltar) ( G^S + G^R + D_1) \big] ds+ C\deltar.
\end{align*}
Multiplying the above inequality by the constant $\frac{1}{2\max(1,c_2)}$, and then adding the result to \eqref{estu0}, together with the smallness of $\eps_1,\deltas,\deltar$, we have
\begin{align*}
&\|v-\wt v\|_{H^1(\bbr)}^2 +\|u-\wt u\|_{H^1(\bbr)}^2 +\deltas\int_0^t|\dot{\mb{X}}|^2 ds +\int_0^t \left( G^R+ G^S+ D+D_1+\mb D_2 \right) ds\\
&\quad\le C\big(\|v_0-\wt v(0,\cdot)\|_{H^1(\bbr)}^2 +\|u_0-\wt u(0,\cdot)\|_{H^1(\bbr)}^2 \big)  + C \deltar^{1/3} .
\end{align*}
This implies the desired result in Lemma \ref{lem-u1}.
\end{proof}

\noindent{\bf Conflict of Interest:} The authors declared that they have no conflicts of interest to this work.

\bibliography{KVW}

\end{document}